\newcommand{\Q}{\mathcal{Q}}
\newcommand{\M}{\mathcal{M}}
\newcommand{\Tr}{\mathrm{Tr}}
\newcommand{\PG}{\mathrm{PG}}
\newcommand{\N}{\mathrm{N}}
\theoremstyle{plain}
\newtheorem{theorem}{Theorem}[section]
\newtheorem{lemma}[theorem]{Lemma}
\newtheorem{proposition}[theorem]{Proposition}
\newtheorem{result}[theorem]{Result}
\numberwithin{equation}{section}
\theoremstyle{remark}
\newcommand\F{\mathbb{F}}
\newcommand\Z{\mathbb{Z}}
\newcommand\cl{\mathcal{L}}
\newcommand\cs{\mathcal{S}}
\newcommand\QQ{{\mathcal{Q}}}
\newcommand\PGO{\mathrm{PGO}}
\def\<{\langle}
\def\>{\rangle}
\begin{document}

\title[Cameron-Liebler Line Classes]{Cameron-Liebler Line Classes with parameter $x=\frac{(q+1)^2}{3}$}

\author[Feng, Momihara, Rodgers, Xiang, and Zou]{Tao Feng$^*$, Koji Momihara$^{\dagger}$, Morgan Rodgers, Qing Xiang, and Hanlin Zou}

\thanks{$^*$Research partially supported by the National Natural Science Foundation of China grant 11771392}
\thanks{$^{\dagger}$Research partially supported by  
JSPS under Grant-in-Aid for Young Scientists (B) 17K14236 and Scientific Research (B) 15H03636}

\address{Tao Feng, School of Mathematical Sciences, Zhejiang University, 38 Zheda Road, Hangzhou 310027, Zhejiang, P. R. China}
\email{tfeng@zju.edu.cn}

\address{Koji Momihara, Faculty of Education,Division of Natural Science\\
Faculty of Advanced Science and Technology, Kumamoto University, 2-40-1 Kurokami, Kumamoto 860-8555, Japan}
\email{momihara@educ.kumamoto-u.ac.jp}

\address{Morgan Rodgers, Department of Mathematics, Fresno State University, Fresno, CA 93740 USA} \email{morgan@csufresno.edu}

\address{Qing Xiang, Department of Mathematical Sciences, University of Delaware, Newark, DE 19716, USA} \email{qxiang@udel.edu}

\address{Hanlin Zou,  Department of Mathematical Sciences, University of Delaware, Newark, DE 19716, USA} \email{hanlin@udel.edu}

\keywords{Cameron-Liebler line class, Gauss sum, Klein quadric,  
 spread, tight set}

\begin{abstract}

 Cameron-Liebler line classes were introduced in \cite{CL}, and motivated by a question about orbits of collineation groups of $\PG(3,q)$. These line classes have appeared in different contexts under disguised names such as Boolean degree one functions, regular codes of covering radius one, and tight sets.  In this paper we construct an infinite family of Cameron-Liebler line classes in $\PG(3,q)$ with new parameter $x=(q+1)^2/3$ for all prime powers $q$ congruent to 2 modulo 3. The examples obtained when $q$ is an odd power of two represent the first infinite family of Cameron-Liebler line classes in $\PG(3,q)$, $q$ even.

\end{abstract}


\maketitle

\section{Introduction}

Let $q$ be a prime power and let $\PG(3,q)$ be the 3-dimensional projective space over the finite field $\F_q$ of order $q$. A {\it spread} in $\PG(3,q)$ is a set of its lines which partitions its points. Let $\cl$ be a set of lines of $\PG(3,q)$ with $|\cl|=x(q^2+q+1)$, $x$ a positive integer. We say that $\cl$ is a {\it Cameron-Liebler line class with parameter $x$} if $|\cl \cap \cs|=x$ for all spreads $\cs$ of $\PG(3,q)$. For example, let $\cl$ be either the set of all lines passing through a fixed point $P$ of $\PG(3,q)$ or the set of all lines in a plane $\pi$ of $\PG(3,q)$; then we see that $\cl$ is a Cameron-Liebler line class with parameter $1$. Furthermore the union of these two sets for $P\not\in \pi$ forms a Cameron-Liebler line class with parameter $x=2$. These examples of Cameron-Liebler line classes with parameter $x=1$ or $2$ are called {\it trivial}. Also, the complement of a Cameron-Liebler line class with parameter $x$ in the set of all lines of $\PG(3,q)$ is a Cameron-Liebler line class with parameter $q^2+1-x$. So without loss of generality we may assume that $ x\leq (q^2+1)/2$ when discussing Cameron-Liebler line classes of parameter $x$ in $\PG(3,q)$.

Cameron-Liebler line classes were first introduced by Cameron and Liebler \cite{CL} in their study of collineation groups of $\PG(3,q)$ having the same number of orbits on points and lines of $\PG(3,q)$. Penttila \cite{tp1, tp2} coined the term ``Cameron-Liebler line class" and studied these objects in some depth. Bruen and Drudge \cite{bd} constructed the first infinite family of Cameron-Liebler line classes with parameter $x=(q^2+1)/2$ for all odd prime powers $q$. After much study of Cameron-Liebler line classes in $\PG(3,q)$,  the notion of  Cameron-Liebler line classes has been generalized to Cameron-Liebler $k$-classes \cite{rsv} in $\PG(2k+1,q)$, and to Cameron-Liebler sets of generators in finite classical polar spaces \cite{drss}. In fact, Cameron-Liebler sets can be introduced for any distance-regular graph; this was done previously under various names: Boolean degree one functions, completely regular codes of strength 0 and covering radius 1, and tight sets. We refer the reader to \cite{fi} for more details on these connections. In this paper we will focus on Cameron-Liebler line classes in $\PG(3,q)$.

The central problem concerning Cameron-Liebler line classes in $\PG(3,q)$ is: for which values of the parameter $x$, $1\leq x\leq (q^2+1)/2$, do there exist Cameron-Liebler line classes with parameter $x$? On the nonexistence side, the state-of-the-art results are those in \cite{metsch2014} and \cite{gavmetsch}.
In particular, it is shown in \cite{metsch2014} that there are no Cameron-Liebler line classes with parameter $x$ in $\PG(3,q)$ if $3\leq x\leq q\sqrt[3]{q/2}-2q/3$.
In terms of constructive results, infinite families of Cameron-Liebler line classes with parameter $x=(q^2+1)/2$ and $x=(q^2-1)/2$ have been constructed in \cite{bd, cp1, cp2, ddmr, fmx, gmp}.
Even though there have been a large number of papers on constructing Cameron-Liebler line classes in $\PG(3,q)$, the parameters of the known infinite families are restricted to either $(q^2+1)/2$ or $(q^2-1)/2$.
In particular, no infinite families of nontrivial Cameron-Liebler line classes in $\PG(3,q)$ are known when $q$ is a power of 2 (note that there are a few examples of Cameron-Liebler line classes known in $\PG(3,4)$, $\PG(3,8)$, $\PG(3,32)$, and $\PG(128)$; see \cite{goP, rodthesis}).
In this paper, we construct Cameron-Liebler line classes in $\PG(3,q)$ with parameter $x=(q+1)^2/3$ for all $q$ congruent to 2 modulo 3. In particular, the first infinite family of Cameron-Liebler line classes in $\PG(3,q)$, $q$ an odd power of 2, is constructed here.

We give an overview of our construction here.
The initial step is to prescribe an automorphism group for the Cameron-Liebler line classes that we intend to construct; once this is done, the Cameron-Liebler line classes we want to construct will be unions of orbits of lines under the action of the prescribed automorphism group.
For the choices of automorphism groups, we follow the idea in \cite{rodthesis}; that is, we will choose a cyclic group of order $q^2+q+1$ as the prescribed automorphism group. Examples of Cameron-Liebler classes with parameter $(q+1)^2/3$ have been found in this way by using a computer for all $q < 150$ with $q \equiv 2\bmod{3}$ (see \cite{rodthesis}).
The difficulty lies in how to come up a choice of orbits for general $q$ 
which will always give a Cameron-Liebler line class in $\PG(3,q)$ with parameter $(q+1)^2/3$. The examples in \cite{rodthesis} provided vital clues for a general choice; also the computations of additive character sums (needed to prove that the union of the chosen orbits is a Cameron-Liebler line class) gave us hints for making correct choices of orbits. In Section 3, we come up with an explicit choice of orbits that will give a Cameron-Liebler line classes with parameter $x=(q+1)^2/3$ for all $q$ congruent to $2$ modulo $3$.

The paper is organized as follows. In Section~\ref{sec:pre}, we review background material on Cameron-Liebler line classes, $x$-tight sets in $\Q^+(5,q)$, 
and character sums over finite fields. In Section~\ref{sec:const0}, we 
introduce two multisets $D_1$ and $D_2$ of $\F_{q^3}^*$ which will be crucial for choosing orbits. In Section~\ref{sec:const2}, we give the proofs that our choice of orbits will indeed give Cameron-Liebler line classes; since the prescribed group is a cyclic one, the computations of additive character sums necessarily involve Gauss sums. In Section 5, we determine the stabilizers of our Cameron-Liebler line classes in $\textup{PSL}(4,q)$. In the Appendix, we give some computations of exponential sums needed in the proof of our main theorem.

\section{Preliminaries}\label{sec:pre}

\subsection{Cameron-Liebler line classes in $\PG(3,q)$ and tight sets in $\Q^+(5,q)$}

To investigate Cameron-Liebler line classes in $\PG(3,q)$, it is often useful to translate their definition to the setting of $\Q^+(5,q)$ using the Klein correspondence (here $\Q^+(5,q)$ is the 5-dimensional hyperbolic orthogonal space, also known as the {\it Klein quadric}). Let $x$ be a positive integer. A subset $\mathcal{M}$ of the points of $\Q^+(5,q)$ is called an {\it $x$-tight set} if for every point $P\in \Q^+(5,q)$, $|P^\perp\cap\mathcal{M}|=x(q+1)+q^2$ or $x(q+1)$ according as $P$ is in $\mathcal{M}$ or not, where $\perp$ is the polarity determined by $\Q^+(5,q)$. The geometries of $\PG(3,q)$ and $\Q^+(5,q)$ are closely related through a mapping known as the Klein correspondence which maps the lines of $\PG(3,q)$ bijectively to the points of $\Q^+(5,q)$, c.f. \cite{h2,stan}. Let $\cl$ be a set of lines of $\PG(3,q)$ with $|\cl|=x(q^2+q+1)$, $x$ a positive integer, and let $\M$ be the image of $\cl$ under the Klein correspondence. Then it is known that $\cl$ is a Cameron-Liebler line class with parameter $x$ in $\PG(3,q)$ if and only if $\M$ is an $x$-tight set of  $\Q^+(5,q)$. Moreover, if $\cl$ is a Cameron-Liebler line class with parameter $x$, by \cite[Theorem 2.1(b)]{metsch2014} it holds that $|P^\perp\cap \M|=x(q+1)+q^2$ for any point $P\in\M$ and $|P^\perp\cap \M|=x(q+1)$ for any point $P\notin \M$ (here $P$ can be in the exterior of $\Q^+(5,q)$); consequently $\M$ is a projective two-intersection set in $\PG(5,q)$ with intersection sizes $h_1=x(q+1)+q^2$ and $h_2=x(q+1)$. We summarize these known facts as follows.

\begin{result}\label{res1}
 Let $\cl$ be a set of $x(q^2+q+1)$ lines in $\PG(3,q)$ with $1\leq x\leq (q^2+1)/2$, and let $\M$ be the image of $\cl$ under the Klein correspondence. Then $\cl$ is a Cameron-Liebler line class with parameter $x$ if and only if $\M$ is an $x$-tight set in $\Q^+(5,q)$; moreover, in the case when $\cl$ is a Cameron-Liebler line class, we have
 \[|P^\perp\cap \M|=\begin{cases}
   x(q+1)+q^2, & \emph{if } P\in \M, \\
   x(q+1),     & \emph{otherwise}.\end{cases}\]
\end{result}

Let $\cl$ be a Cameron-Liebler line class with parameter $x$ in $\PG(3,q)$ and let $\M\subset \Q^+(5,q)$ be the image of $\cl$ under the Klein correspondence. By Result \ref{res1}, $\M$ is a projective two-intersection set in $\PG(5,q)$. 
Define $D=\{\lambda v: \lambda\in\F_{q}^*, \<v\>\in\M\}$, which is a subset of $(\F_{q}^6,+)$. 
Let $\psi$ be a non-principal additive character of $\F_q^6$. Then $\psi$ is principal on a unique hyperplane $P^\perp$ for some point $P\in\PG(5,q)$. We have
\begin{align*}
 \psi(D) & =\sum_{\<v\>\in\M}\sum_{\lambda\in\F_q^*}\psi(\lambda v)=\sum_{\<v\>\in\M}(q\mathds{1}_{P^\perp}(\<v\>)-1) \\
         & =-|\M|+q|P^\perp\cap \M|=\begin{cases}
  -x+q^3, & \text{if~}P\in\M, \\
  -x,     & \text{otherwise},\end{cases}
\end{align*}
where $\mathds{1}_{P^\perp}(\<v\>)$ is the characteristic function taking value 1 if $\<v\>\in P^\perp$, and 0 otherwise. Conversely, for each point $P\in\PG(5,q)$, there is a non-principal character $\psi$ that is principal on the hyperplane $P^\perp$, and the size of $P^\perp\cap\M$ can be computed from $\psi(D)$. Therefore the character values of $D$ reflect the sizes of intersection of $\M$ with the hyperplanes of $\PG(5,q)$. To summarize, we have the following result.

\begin{result}\label{res2}
 Let $\cl$ be a set of $x(q^2+q+1)$ lines in ${\rm PG}(3,q)$ with $1\leq x \leq (q^2+1)/2$, and let $\M$ be the image of $\cl$ under the Klein correspondence. Define
 $$D=\{\lambda v: \lambda\in\F_q^*, \<v\>\in\M\}\subset (\F_q^6,+).$$
 Then $\cl$ is a Cameron-Liebler line class with parameter $x$ if and only if $|D|=(q^3-1)x$ and for any $P\in {\rm PG}(5,q)$,
 \[\psi(D)=\begin{cases}
   -x+q^3, & \emph{if }P\in\M, \\
   -x,     & \emph{otherwise},
  \end{cases}\]
 where $\psi$ is any non-principal character of $\F_q^6$ that is principal on the hyperplane $P^\perp$.
\end{result}


\subsection{Gauss sums}
We collect some auxiliary results on Gauss sums as a preparation for computing additive character values of a subset of vectors of a vector space over $\F_q$. We assume that the reader is familiar with the basic theory of characters of finite fields as can be found in Chapter 5 of \cite{ln1997}

Let $q=p^n$ with $p$ a prime and $n \ge 1$, and let $\zeta_p=\exp(\frac{2\pi \sqrt{-1}}{p})$. Furthermore, let $\psi_{\F_{q}}$ be the {\it canonical} additive character of $\F_q$ defined by $\psi_{\F_{q}}(x)=\zeta_p^{\Tr_{q/p}(x)}$, where $\Tr_{q/p}$ is the absolute trace from $\F_q$.
 For any multiplicative character $\chi$ of $\F_q$, 
 define the {\it Gauss sum} by
 $$G_q(\chi)=\sum_{x\in\F_{q}^*}\psi_{\F_q}(x)\chi(x).$$
The following are some basic properties of Gauss sums:
\begin{enumerate}[(i)]
 \item $G_q(\chi)\overline{G_q(\chi)}=q$ if $\chi$ is non-principal;
 \item $G_q(\chi^{-1})=\chi(-1)\overline{G_q(\chi)}$;
 \item $G_q(\chi)=-1$ if $\chi$ is principal.
\end{enumerate}

Gauss sums are instrumental in the transition from the additive to the multiplicative structure (or the other way around) of a finite field. This can be seen more precisely in the next lemma.

\begin{lemma}\label{orthchar}
 By orthogonality of
 characters, the canonical additive character $\psi_{\F_q}$ of $\F_q$ can be expressed as a linear combination of the multiplicative characters:
\begin{equation}\label{eqn_GSexp}
  \psi_{\F_q}(x)=\frac{1}{q-1}\sum_{\chi\in \widehat{\F_q^*}}G_q(\chi^{-1})\chi(x), \ \forall x\in\F_q^*,
 \end{equation}
 where $\widehat{\F_q^*}$ is the character group of $\F_q^\ast$.
 On the other hand, each nontrivial multiplicative character $\chi$ of $\F_q$ can also be expressed as a linear combination of the additive characters: 
 \[
  \chi(x)=\frac{1}{q}G_q(\chi)\sum_{a\in \F_q^\ast}\chi^{-1}(-a)\psi_{\F_q}(ax), \ \forall x\in\F_q^*.\]
\end{lemma}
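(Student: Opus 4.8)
The plan is to derive both identities from the orthogonality relations for the two families of characters of $\F_q$, together with the elementary properties (i)--(iii) of Gauss sums listed above. Neither identity is deep: each is simply a change of basis in the space of $\C$-valued functions on $\F_q^*$.

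For the first identity, I would recall that the multiplicative characters $\chi\in\widehat{\F_q^*}$ form an orthogonal basis of the $(q-1)$-dimensional space of functions $\F_q^*\to\C$, with $\sum_{y\in\F_q^*}\chi(y)\overline{\chi'(y)}=(q-1)\delta_{\chi,\chi'}$. Expanding the restriction of $\psi_{\F_q}$ to $\F_q^*$ in this basis as $\psi_{\F_q}(x)=\sum_{\chi}c_\chi\chi(x)$, orthogonality yields $c_\chi=\frac{1}{q-1}\sum_{y\in\F_q^*}\psi_{\F_q}(y)\overline{\chi(y)}=\frac{1}{q-1}\sum_{y\in\F_q^*}\psi_{\F_q}(y)\chi^{-1}(y)=\frac{1}{q-1}G_q(\chi^{-1})$, which is exactly \eqref{eqn_GSexp}.

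For the second identity, I would argue by direct manipulation of the right-hand side. Setting $t=ax$ and letting $a$ run over $\F_q^*$ (so that $t$ also runs over $\F_q^*$), and using $\chi^{-1}(-a)=\chi^{-1}(-1)\,\chi^{-1}(t)\,\chi(x)$, the sum $\sum_{a\in\F_q^*}\chi^{-1}(-a)\psi_{\F_q}(ax)$ becomes $\chi(x)\,\chi^{-1}(-1)\sum_{t\in\F_q^*}\chi^{-1}(t)\psi_{\F_q}(t)=\chi(x)\,\chi^{-1}(-1)\,G_q(\chi^{-1})$. It remains to simplify $\frac{1}{q}G_q(\chi)\,\chi^{-1}(-1)\,G_q(\chi^{-1})$: by property (ii), $G_q(\chi^{-1})=\chi(-1)\overline{G_q(\chi)}$, so $\chi^{-1}(-1)G_q(\chi^{-1})=\overline{G_q(\chi)}$; and by property (i), $G_q(\chi)\overline{G_q(\chi)}=q$ since $\chi$ is nontrivial. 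Hence $\frac{1}{q}G_q(\chi)\,\chi^{-1}(-1)\,G_q(\chi^{-1})=1$, and the right-hand side equals $\chi(x)$.

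Since the argument is entirely mechanical, I do not anticipate any real obstacle; the only points requiring slight care are the bookkeeping of the factor $\chi(-1)$ and the use of the hypothesis that $\chi$ is nontrivial (so that property (i), not (iii), applies in the last step). Alternatively, the second identity can be obtained from the first by the inverse Fourier transform over $(\F_q,+)$, but the direct computation above is shorter.
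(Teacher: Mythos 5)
Your proof is correct and is exactly the standard argument the paper has in mind (the paper states this lemma without proof, merely invoking orthogonality of characters, as in Lidl--Niederreiter). Both the orthogonal expansion giving the coefficients $\frac{1}{q-1}G_q(\chi^{-1})$ and the substitution $t=ax$ combined with properties (i) and (ii) of Gauss sums are carried out correctly, including the cancellation of the $\chi(-1)$ factor and the necessary use of nontriviality of $\chi$.
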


\begin{lemma}\label{partialGS}
 Let $C_0$ be a subgroup of $\F_q^*$ of index $N$, and let $\chi$ be a character of $\F_q^*$ of order $N$. Then for any $x\in\F_q^*$ we have
 \[
  \frac{1}{N}\sum_{j=0}^{N-1}G_q(\chi^{-j})\chi^j(x)=\sum_{a\in C_0}\psi_{\F_q}(xa).
 \]
\end{lemma}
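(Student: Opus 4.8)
The plan is to prove the identity by starting from the right-hand side and expanding the additive character $\psi_{\F_q}$ in terms of multiplicative characters via Lemma~\ref{orthchar}. Since $x\in\F_q^*$ and every $a\in C_0$ lies in $\F_q^*$, the product $xa$ is nonzero, so \eqref{eqn_GSexp} applies term by term and yields
\[
\sum_{a\in C_0}\psi_{\F_q}(xa)=\frac{1}{q-1}\sum_{\chi'\in\widehat{\F_q^*}}G_q(\chi'^{-1})\,\chi'(x)\sum_{a\in C_0}\chi'(a).
\]
First I would evaluate the inner sum $\sum_{a\in C_0}\chi'(a)$ by orthogonality of characters on the group $C_0$: it equals $|C_0|=(q-1)/N$ when $\chi'$ restricts to the principal character on $C_0$, and $0$ otherwise. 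The next step is to identify exactly which $\chi'$ are trivial on $C_0$. Since $\F_q^*$ is cyclic and $C_0$ is its unique subgroup of index $N$ (equivalently, the set of $N$-th powers), the characters trivial on $C_0$ form a subgroup of $\widehat{\F_q^*}$ of order $N$; because $\chi$ has order $N$ and is trivial on $C_0$ (writing $\F_q^*=\langle g\rangle$, we have $C_0=\langle g^N\rangle$ and $\chi(g^N)=\chi(g)^N=1$ as $\chi(g)$ is a primitive $N$-th root of unity), this subgroup is precisely $\langle\chi\rangle=\{\chi^0,\chi^1,\dots,\chi^{N-1}\}$.

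Substituting this back, only the terms $\chi'=\chi^j$ for $0\le j\le N-1$ survive, each contributing the factor $(q-1)/N$; cancelling $q-1$ then leaves $\frac{1}{N}\sum_{j=0}^{N-1}G_q(\chi^{-j})\chi^j(x)$, which is the left-hand side, completing the argument.

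There is essentially no serious obstacle here; the lemma is a routine application of the Gauss sum expansion of the additive character. The one point meriting a little care is the bookkeeping of the principal-character contribution: when $\chi'=\chi^0$ is principal we have $G_q(\chi'^{-1})=-1$ by property (iii), $\chi'(x)=1$, and the contribution is $-(q-1)/\big(N(q-1)\big)=-1/N$, which matches the $j=0$ term $\frac{1}{N}G_q(\chi^0)\chi^0(x)=-1/N$ on the left, so the two sides agree term for term.
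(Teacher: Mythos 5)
Your proof is correct and follows essentially the same route as the paper's: expand $\psi_{\F_q}(xa)$ via the Gauss sum identity \eqref{eqn_GSexp}, sum over $a\in C_0$, and use orthogonality to see that only the $N$ characters trivial on $C_0$ (namely the powers of $\chi$) survive, each contributing a factor $(q-1)/N$. The paper phrases this by writing characters as powers of a fixed generator $\theta$ of $\widehat{\F_q^*}$ and picking out the indices $i\equiv 0\pmod{(q-1)/N}$, which is the same computation.
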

\begin{proof}
 Let $\theta$ be a character of $\F_q^*$ of order $q-1$, and let $\chi=\theta^{(q-1)/N}$. By \eqref{eqn_GSexp}, we have
 \begin{equation}\label{short}
\sum_{a\in C_0}\psi_{\F_q}(xa)=\frac{1}{q-1}\sum_{i=0}^{q-1}G_q(\theta^{-i})\theta^i(x)\sum_{a\in C_0}\theta^i(a).
 \end{equation}
 The inner sum in the right hand side of (\ref{short}) equals $(q-1)/N$ when $i\equiv 0\pmod{(q-1)/N}$, and $0$ otherwise; so $\sum_{a\in C_0}\psi_{\F_q}(xa)$ equals $\frac{1}{N}\sum_{j=0}^{N-1}G_q(\chi^{-j})\chi^j(x)$ as desired.
\end{proof}

\bigskip
The following result on the character values of a Singer difference set will be used in the proof of our main theorem.
\begin{lemma}[{\cite[Theorem 2.1]{EHKX1999}}]\label{chiL0}
 Let $L$ be a complete set of coset representatives of $\F_q^*$ in $\F_{q^3}^*$. Let $${\mathcal S}=\{x\in L\mid {\rm Tr}_{q^3/q}(x)=0\}.$$ If $\chi$ is a nontrivial character of $\F_{q^3}^*$ whose restriction on $\F_q^*$ is trivial, then
 $$\chi({\mathcal S})=G_{q^3}(\chi)/q.$$
\end{lemma}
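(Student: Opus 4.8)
The plan is to reduce the multiplicative character sum $\chi(\mathcal S)$ to an additive character sum over the trace-zero hyperplane of $\F_{q^3}$, and then to recognize the resulting sum as a Gauss sum.

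First I would note that, since $\chi$ is trivial on $\F_q^*$, the value $\chi(x)$ depends only on the coset $x\F_q^*$; in particular $\chi(\mathcal S)$ does not depend on the choice of coset representatives $L$. Moreover the set $H:=\{y\in\F_{q^3}:\Tr_{q^3/q}(y)=0\}$ is $\F_q$-linear, so $H\setminus\{0\}$ is a disjoint union of exactly $|\mathcal S|$ complete cosets of $\F_q^*$, namely the cosets represented by the elements of $\mathcal S$ (this is precisely the fact that forces $\mathcal S$ to have the "Singer line" structure). Hence
\[
\chi(\mathcal S)=\frac{1}{q-1}\sum_{y\in H\setminus\{0\}}\chi(y).
\]

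Next I would expand the indicator function of $H$ using additive characters. Writing $\psi_{\F_{q^3}}=\psi_{\F_q}\circ\Tr_{q^3/q}$ for the canonical additive character of $\F_{q^3}$, one has, for every $y\in\F_{q^3}$,
\[
\mathds{1}_H(y)=\frac{1}{q}\sum_{a\in\F_q}\psi_{\F_q}\bigl(a\,\Tr_{q^3/q}(y)\bigr)=\frac{1}{q}\sum_{a\in\F_q}\psi_{\F_{q^3}}(ay),
\]
since $\Tr_{q^3/q}$ is $\F_q$-linear. Substituting this into the displayed sum and interchanging summations gives
\[
\sum_{y\in H\setminus\{0\}}\chi(y)=\frac{1}{q}\sum_{a\in\F_q}\ \sum_{y\in\F_{q^3}^*}\chi(y)\psi_{\F_{q^3}}(ay).
\]
The term $a=0$ contributes $\tfrac{1}{q}\sum_{y\in\F_{q^3}^*}\chi(y)=0$ because $\chi$ is nontrivial. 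For each $a\in\F_q^*$, the change of variables $z=ay$ together with the triviality of $\chi$ on $\F_q^*$ (so that $\chi(a^{-1})=1$) turns the inner sum into $\sum_{z\in\F_{q^3}^*}\chi(z)\psi_{\F_{q^3}}(z)=G_{q^3}(\chi)$. Summing over the $q-1$ nonzero values of $a$ yields $\sum_{y\in H\setminus\{0\}}\chi(y)=\frac{q-1}{q}\,G_{q^3}(\chi)$, and dividing by $q-1$ gives $\chi(\mathcal S)=G_{q^3}(\chi)/q$, as claimed.

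I do not expect a serious obstacle: the argument is a routine passage between the additive and multiplicative structures of $\F_{q^3}$. The two points that require care are (a) verifying that $H\setminus\{0\}$ is genuinely a union of complete $\F_q^*$-cosets, which is exactly what makes the reduction in the first step an equality rather than an inequality; and (b) invoking the hypothesis that $\chi$ is trivial on $\F_q^*$ at precisely the right moment, namely in the change of variables $z=ay$, so that no spurious factor $\chi(a)$ survives and the sum collapses to a single Gauss sum.
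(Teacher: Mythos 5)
Your proof is correct. The paper does not prove this lemma at all --- it simply cites \cite[Theorem 2.1]{EHKX1999} --- so there is no in-paper argument to compare against; your derivation (identify $H\setminus\{0\}$ as a disjoint union of the $\F_q^*$-cosets represented by $\mathcal S$, detect the trace-zero hyperplane with additive characters, kill the $a=0$ term by nontriviality of $\chi$, and collapse the remaining $q-1$ terms to a single Gauss sum using triviality of $\chi$ on $\F_q^*$) is the standard argument and is essentially the proof of the cited result.
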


\subsection{Cubic polynomials over $\F_q$}
Let $q=p^n$ be a prime power, where $p\ne 3$ is a prime. Let $f(X)=X^3+cX+d$ be a cubic polynomial  over $\F_q$, and let $\gamma_1,\gamma_2,\gamma_3$ be its roots in some extension field of $\F_q$. The discriminant of $f$ is
\[
 \Delta(f):=(\gamma_1-\gamma_2)^2(\gamma_2-\gamma_3)^2(\gamma_3-\gamma_1)^2,
\]
which equals $-4c^3-27d^2$ for all $q$. Hence $f$ has no repeated roots if and only if $\Delta(f)\ne 0$. In particular, when $q$ is even, we have $\Delta(f)=d^2$. We shall need the following theorem giving the number of roots of $f$ in $\F_q$ in various situations.
\begin{theorem}\cite{Dixon,Williams1975}\label{cubic}
 Let $p\ne 3$ be a prime and $q=p^n$. Suppose that  $f(X)=X^3+cX+d$ is a polynomial over $\F_q$ with discriminant $\Delta(f)\ne 0$.
 \begin{enumerate}
  \item[(i)] If $q$ is odd, $f$ has exactly one root in $\F_q$ if $\Delta(f)$ is a nonsquare in $\F_q$ and $0$ or $3$ roots in $\F_q$ otherwise.
  \item[(ii)] If $q$ is even, $f$ has exactly one root in $\F_q$ if $\Tr_{q/2}(c^3d^{-2})\ne \Tr_{q/2}(1)$ and $0$ or $3$ roots in $\F_q$ otherwise.
 \end{enumerate}
\end{theorem}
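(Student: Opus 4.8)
The plan is to treat both parts by one device: translate the count of roots of $f$ in $\F_q$ into the cycle type of the Frobenius automorphism $\sigma\colon x\mapsto x^q$ acting on the roots. Let $\gamma_1,\gamma_2,\gamma_3$ be the roots of $f$ in $\overline{\F_q}$; they are distinct because $\Delta(f)\ne 0$, and $\sigma$ permutes them since $f$ has coefficients in $\F_q$. A root $\gamma_i$ lies in $\F_q$ iff $\sigma(\gamma_i)=\gamma_i$, so the number of roots of $f$ in $\F_q$ equals the number of fixed points of $\sigma$ on $\{\gamma_1,\gamma_2,\gamma_3\}$. Because the Galois group of a separable polynomial over a finite field is cyclic (generated by Frobenius), $\sigma$ acts as the identity, as a transposition, or as a $3$-cycle --- giving $3$, $1$, or $0$ roots in $\F_q$ (and $f$ splits, factors as linear times irreducible quadratic, or is irreducible, respectively). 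So in each part it suffices to decide exactly when $\sigma$ induces an odd permutation of the roots. (Recall $p\ne 3$.)

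For part (i), with $q$ odd, I would use the classical discriminant argument. Set $\delta=(\gamma_1-\gamma_2)(\gamma_1-\gamma_3)(\gamma_2-\gamma_3)$, so $\delta\ne 0$ and $\delta^2=\Delta(f)$. For a permutation $\pi$ of the roots one has $\pi(\delta)=\mathrm{sgn}(\pi)\,\delta$; since $q$ is odd, $\delta\ne-\delta$, so $\sigma$ is even iff $\sigma(\delta)=\delta$ iff $\delta\in\F_q$, which happens iff $\Delta(f)$ is a (necessarily nonzero) square in $\F_q$. Hence $\Delta(f)$ a nonsquare forces $\sigma$ to be a transposition, i.e. exactly one root in $\F_q$, while $\Delta(f)$ a square leaves only the identity or a $3$-cycle, i.e. $0$ or $3$ roots.

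Part (ii), with $q$ even, is the real content: now every element of $\F_q$ is a square, so $\delta\in\F_q$ always and the test above is vacuous. The remedy is to replace the square-root resolvent $\delta$ by an Artin--Schreier resolvent. Since $\mathrm{char}\,\F_q=2$ we have $\delta^2=\Delta(f)=d^2$, so $\delta=d\ne 0$; introduce the two cyclic cubic expressions $N=\gamma_1^2\gamma_2+\gamma_2^2\gamma_3+\gamma_3^2\gamma_1$ and $N'=\gamma_1^2\gamma_3+\gamma_3^2\gamma_2+\gamma_2^2\gamma_1$, and set $\beta=N/\delta$. A transposition of two roots fixes $\delta$ and swaps $N$ with $N'$, while a $3$-cycle fixes both $\delta$ and $N$; using $N+N'=\sum_{i\ne j}\gamma_i^2\gamma_j=-3e_3=3d=d$ (writing $e_1,e_2,e_3$ for the elementary symmetric functions of $\gamma_1,\gamma_2,\gamma_3$, so $e_1=0$, $e_2=c$, $e_3=-d$), a transposition sends $\beta$ to $N'/\delta=\beta+1$. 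Thus $\sigma$ is even iff $\beta\in\F_q$. On the other hand $\beta^2+\beta=NN'/\delta^2$ is symmetric in the $\gamma_i$, and a routine symmetric-function computation gives $NN'=e_2^3+9e_3^2=c^3+d^2$ in characteristic $2$, so $\beta^2+\beta=c^3d^{-2}+1\in\F_q$. By the standard solvability criterion for the Artin--Schreier equation $Y^2+Y=\alpha$ over $\F_q$, $\beta\in\F_q$ iff $\Tr_{q/2}(c^3d^{-2}+1)=0$, i.e. iff $\Tr_{q/2}(c^3d^{-2})=\Tr_{q/2}(1)$. Therefore $f$ has exactly one root in $\F_q$ precisely when $\Tr_{q/2}(c^3d^{-2})\ne\Tr_{q/2}(1)$, and $0$ or $3$ roots otherwise, as claimed. (The degenerate case $c=0$ is covered too: then $\beta^2+\beta=1$, the criterion becomes $\Tr_{q/2}(1)=1$, i.e. $n$ odd, matching the fact that $X^3+d$ has a unique root iff $x\mapsto x^3$ is a bijection of $\F_q$.)

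I expect the only genuine obstacle to be in part (ii): spotting the correct Artin--Schreier resolvent $\beta$ and computing $\beta^2+\beta$ in closed form. The brute-force alternative --- normalize $f$ to $Y^3+Y+e$ by $X=\sqrt{c}\,Y$, pass to $u^6+eu^3+1=0$ by $Y=u+u^{-1}$, decide whether the quadratic $t^2+et+1$ in $t=u^3$ splits over $\F_q$ (this is what $\Tr_{q/2}(c^3d^{-2})$ governs), and then track whether the relevant cube roots of $t$ lie in $\F_q$, $\F_{q^2}$, or $\F_{q^3}$ according to $\gcd(3,q-1)$ (equivalently the parity of $n$) --- also proves the theorem, but it fragments into many subcases. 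The single computational fact the clean proof rests on is the identity $NN'=c^3+d^2$ in characteristic $2$, together with arranging the normalization so that the constant term of $\beta^2+\beta$ is $1$ (of trace $\Tr_{q/2}(1)$) --- which is exactly what turns the criterion into ``$\ne\Tr_{q/2}(1)$'' rather than ``$\ne 0$''.
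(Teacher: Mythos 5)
Your proof is correct. Note that the paper does not prove this statement at all --- it is quoted as a known result with citations to Dickson and to Williams --- so there is no in-paper argument to compare against; what you have supplied is a self-contained proof of the cited fact. Your strategy is the standard Galois-resolvent one, and both halves check out: the reduction to the cycle type of Frobenius on the three (distinct, since $\Delta\ne 0$) roots is sound; in part (i) the square-root resolvent $\delta$ with $\sigma(\delta)=\mathrm{sgn}(\sigma)\delta$ correctly identifies ``$\Delta$ nonsquare'' with ``$\sigma$ is a transposition''; and in part (ii) the Artin--Schreier resolvent $\beta=N/\delta$ works because $N+N'=e_1e_2-3e_3=d=\delta$ in characteristic $2$ (so a transposition sends $\beta$ to $\beta+1$) and $NN'=c^3+9d^2=c^3+d^2$, giving $\beta^2+\beta=c^3d^{-2}+1$ and hence the criterion $\Tr_{q/2}(c^3d^{-2})=\Tr_{q/2}(1)$ for $\beta\in\F_q$ via the additive Hilbert 90 / trace criterion. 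Your sanity check at $c=0$ (where the answer reduces to whether cubing is a bijection of $\F_q$, i.e.\ whether $n$ is odd) is also consistent. This is essentially the modern packaging of the arguments in the cited sources, with the quadratic resolvent for odd $q$ and the Artin--Schreier resolvent replacing it in characteristic $2$; it is exactly the right level of generality for the statement as used in the paper.
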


Our construction of new Cameron-Liebler line classes is based on the image sets of certain cubic polynomials as shown in the next section. This idea was previously used in \cite{DD2004} for constructing new difference sets with Singer parameters.

\section{Cameron-Liebler line classes with parameter $x=(q+1)^2/3$}\label{sec:const0}

\subsection{The set $E$}\label{sec:const1}
Throughout the rest of the paper, we always assume that $q$ is a prime power such that $q\equiv 2\pmod{3}$. We define
\begin{align*}
 T_0 & =\{x\in\F_{q^3}^*:\, \Tr_{q^3/q}(x)=0\}, \\
 L_0 & =\{x\in T_0: \,\N_{q^3/q}(x)=1 \},
\end{align*}
where $\Tr_{q^3/q}$ and $\N_{q^3/q}$ are the relative trace and norm from $\F_{q^3}$ to $\F_q$, respectively. Then $|T_0|=q^2-1$, $|L_0|=q+1$ and $L_0\cdot \F_q^*=T_0$. Since $\gcd(q-1,q^2+q+1)=1$, we have $C_0\cdot \F_q^*=\F_{q^3}^*$, where $C_0$ is the subgroup of $\F_{q^3}^*$ of order $q^2+q+1$.

\begin{lemma}\label{neg3ns}
   If $q\equiv 2\pmod{3}$ with $q$ odd, then $-3$ is a nonsquare in $\F_q$.
  \end{lemma}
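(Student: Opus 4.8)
The statement is elementary: if $q \equiv 2 \pmod 3$ and $q$ is odd, then $-3$ is a nonsquare in $\F_q$. The plan is to reduce this to a quadratic reciprocity / cyclotomic argument. First I would recall that $-3$ being a square in $\F_q$ is governed by whether $\F_q$ contains a primitive cube root of unity, equivalently whether $3 \mid q - 1$. More precisely, the key observation is that $\F_q$ contains a primitive cube root of unity $\omega$ if and only if $q \equiv 1 \pmod 3$, and in that case $\omega = \frac{-1 + \sqrt{-3}}{2}$ shows $-3$ is a square; conversely, if $-3$ is a square, then $\frac{-1+\sqrt{-3}}{2}$ is a root of $X^2 + X + 1$ in $\F_q$, hence $\F_q$ has an element of order $3$, forcing $3 \mid q - 1$.

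So the argument runs: suppose for contradiction that $-3$ is a square in $\F_q$, say $-3 = t^2$ for some $t \in \F_q^*$ (note $t \ne 0$ since $q$ is odd and $-3 \ne 0$). Then set $\omega = \frac{-1+t}{2}$, which makes sense as $q$ is odd so $2$ is invertible. A direct check gives $\omega^2 + \omega + 1 = \frac{t^2 - 3}{4} = \frac{-3-3}{4}$... wait, I should be careful: $\omega^2 + \omega + 1 = \frac{(-1+t)^2}{4} + \frac{-1+t}{2} + 1 = \frac{1 - 2t + t^2 - 2 + 2t + 4}{4} = \frac{3 + t^2}{4} = \frac{3 + (-3)}{4} = 0$. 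Hence $\omega^3 = 1$ and $\omega \ne 1$ (else $t = 3$, giving $9 = -3$, i.e. $12 = 0$ in $\F_q$, impossible for $q$ odd unless $p \mid 12$; since $3 \nmid q$ we'd need $p = 2$, contradicting $q$ odd). Therefore $\F_q^*$ contains an element of order $3$, so $3 \mid |\F_q^*| = q - 1$, i.e. $q \equiv 1 \pmod 3$, contradicting $q \equiv 2 \pmod 3$.

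I expect no real obstacle here — the only minor care needed is handling the degenerate cases ($t = 0$, $\omega = 1$) which are ruled out precisely because $q$ is odd and $q \equiv 2 \pmod 3$ implies $3 \nmid q$. An alternative, equally short route is to use the multiplicativity of the Legendre symbol: $\left(\frac{-3}{q}\right) = \left(\frac{-1}{q}\right)\left(\frac{3}{q}\right)$ and apply quadratic reciprocity to express $\left(\frac{3}{q}\right)$ in terms of $q \bmod{12}$, then combine with $q \equiv 2 \pmod 3$; but since $q$ may be a proper prime power rather than a prime, one must first reduce to the prime field or invoke the norm map, so the cube-root-of-unity argument above is cleaner and I would present that one.
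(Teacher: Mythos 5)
Your proof is correct, and it takes a genuinely different route from the paper. The paper reduces to the prime field (writing $q=p^n$, noting $q\equiv 2\pmod 3$ forces $p\equiv 2\pmod 3$ and $n$ odd, so a nonsquare in $\F_p$ stays a nonsquare in $\F_q$) and then evaluates the Legendre symbol $\left(\frac{-3}{p}\right)$ by quadratic reciprocity --- essentially the ``alternative route'' you sketch and correctly flag as requiring the reduction step. Your main argument instead works directly in $\F_q$: if $-3=t^2$ then $\omega=\frac{t-1}{2}$ satisfies $\omega^2+\omega+1=0$, so $\F_q^*$ contains an element of order $3$, forcing $3\mid q-1$ and contradicting $q\equiv 2\pmod 3$. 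Your computation $\omega^2+\omega+1=\frac{t^2+3}{4}=0$ checks out, and you correctly dispose of the degenerate cases $t=0$ and $\omega=1$ using $p\neq 2,3$. The cube-root-of-unity argument is arguably cleaner here: it needs no reduction to the prime field, no parity-of-$n$ observation, and no quadratic reciprocity, whereas the paper's approach is the standard number-theoretic computation that generalizes routinely to other small discriminants. Either proof is acceptable; yours is self-contained and elementary.
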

  \begin{proof}
   Write $q=p^n$ with $p$ an odd prime. Then $p\equiv 2\pmod{3}$ and $n$ is odd. It suffices to show that $-3$ is a nonsquare in $\F_p$. By the quadratic reciprocity we have
   \[\left(\frac{-3}{p}\right)=\left(\frac{-1}{p}\right)\cdot \left(\frac{3}{p}\right)=(-1)^{(p-1)/2}\cdot(-1)^{(p-1)/2\cdot (3-1)/2}\cdot\left(\frac{p}{3}\right)=-1.\]
   Here, $(\frac{\cdot}{p})$ is the Legendre symbol. The proof is complete.
  \end{proof}

\begin{lemma}\label{lem_Qz}
 If $z$ is an element of $\F_{q^3}^*$ such that $\Tr_{q^3/q}(z)=0$, then  $\Tr_{q^3/q}(z^{1+q})\ne 0$.
\end{lemma}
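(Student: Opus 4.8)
The plan is to express both vanishing conditions as algebraic conditions on the minimal polynomial of $z$ over $\F_q$ and derive a contradiction. Since $\Tr_{q^3/q}(z) = 0$ and we want to understand $\Tr_{q^3/q}(z^{1+q})$, I would work with the elementary symmetric functions of the conjugates $z, z^q, z^{q^2}$. Write $e_1 = z + z^q + z^{q^2}$, $e_2 = z^{1+q} + z^{q+q^2} + z^{q^2+1}$, and $e_3 = z^{1+q+q^2} = \N_{q^3/q}(z)$. The hypothesis says $e_1 = 0$, and the desired conclusion is exactly $e_2 \ne 0$, since $\Tr_{q^3/q}(z^{1+q}) = z^{1+q} + (z^{1+q})^q + (z^{1+q})^{q^2} = z^{1+q} + z^{q+q^2} + z^{q^2+1} = e_2$ (using that the conjugates of $z^{1+q}$ over $\F_q$ are obtained by applying the Frobenius $x \mapsto x^q$).

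**The key step.** Suppose for contradiction that $e_2 = 0$ as well. Then the minimal polynomial of $z$ over $\F_q$ is $X^3 - e_1 X^2 + e_2 X - e_3 = X^3 - e_3$ (a binomial!), so $z^3 = e_3 = \N_{q^3/q}(z) \in \F_q^*$. This forces $z^3 \in \F_q^*$, hence $z$ has multiplicative order dividing $3(q-1)$. On the other hand, $z \in \F_{q^3}^*$ but $z \notin \F_q$ (since $e_1 = 0 \ne 3z$ would be needed for $z \in \F_q$, and indeed $z \in \F_q$ would give $\Tr_{q^3/q}(z) = 3z \ne 0$ because $q \equiv 2 \pmod 3$ means $3 \ne 0$ in $\F_q$). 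So the order of $z$ divides $q^3 - 1$ but not $q - 1$; combined with order dividing $3(q-1)$, the order of $z$ must divide $\gcd(3(q-1), q^3-1)$ while not dividing $q-1$. Now $q^3 - 1 = (q-1)(q^2+q+1)$ and $\gcd(q-1, q^2+q+1) = \gcd(q-1,3) = 1$ since $q \equiv 2 \pmod 3$ gives $q^2 + q + 1 \equiv 4 + 2 + 1 = 7 \equiv 1 \pmod 3$, wait — I should instead note $\gcd(3(q-1), q^3 - 1)$ directly: the only way to get a contribution beyond $q-1$ is from the factor $3$, and $3 \mid q^2+q+1$ iff $q \equiv 1 \pmod 3$, which is false here. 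Hence $\gcd(3(q-1), q^3-1) = (q-1)\gcd(3, q^2+q+1) = (q-1)$, so the order of $z$ divides $q-1$, contradiction. This would complete the proof.

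**Main obstacle and refinements.** The main subtlety is the characteristic-3-style degeneracies: one must be careful that "$z^3 \in \F_q$ implies order divides $3(q-1)$" is used correctly, and that $q \equiv 2 \pmod 3$ is exactly what rules out $3 \mid q^2 + q + 1$. I would double-check the claim $z \notin \F_q$: if $z \in \F_q$ then $\Tr_{q^3/q}(z) = 3z$, and since $\gcd(3,q) = 1$ (as $q \equiv 2 \pmod 3$ forces $p \ne 3$), $3z = 0$ forces $z = 0$, excluded. The cleanest writeup avoids multiplicative-order bookkeeping by arguing directly: from $z^3 = c \in \F_q^*$, every Galois conjugate satisfies $(z^{q^i})^3 = c^{q^i} = c$, so $z^q/z$ is a cube root of unity in $\F_{q^3}$; but the group of cube roots of unity in $\F_{q^3}^*$ is trivial precisely because $3 \nmid q^3 - 1$ — and indeed $q^3 \equiv 2^3 \equiv 2 \pmod 3$, so $q^3 - 1 \equiv 1 \pmod 3$, hence the only cube root of unity is $1$, giving $z^q = z$, i.e. $z \in \F_q$, the contradiction. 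This second route is shorter and I would likely present it. No separate handling of even versus odd $q$ is needed, which is a pleasant feature of the argument.
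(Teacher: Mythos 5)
Your proof is correct and follows essentially the same route as the paper: both reduce the assumption $\Tr_{q^3/q}(z^{1+q})=0$ (together with $\Tr_{q^3/q}(z)=0$ and $z\notin\F_q$) to the statement that the minimal polynomial of $z$ is the binomial $X^3-\N_{q^3/q}(z)$, and then use $q\equiv 2\pmod 3$ to rule this out. The only cosmetic difference is the final contradiction: the paper notes that cubing is a bijection on $\F_q$ so $X^3-c$ has a root there, while you argue via multiplicative orders (or, equivalently, the triviality of cube roots of unity in $\F_{q^3}^*$); both hinge on the same fact $\gcd(3,q^3-1)=1$.
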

\begin{proof}
 We have $z\not\in\F_q$, since otherwise $3z=\Tr_{q^3/q}(z)=0$. If $\Tr_{q^3/q}(z^{1+q})=0$, then the minimal polynomial of $z$ over $\F_q$ is $X^3-c$, where $c=\N_{q^3/q}(z)$. Since $q\equiv 2\pmod{3}$, we have $\gcd(q-1,3)=1$, so $X^3=c$ has exactly one root in $\F_q$: a contradiction to the irreducibility of $X^3-c$. This completes the proof.
\end{proof}

Since $q\equiv 2\pmod{3}$, we have $\gcd(q-1,3)=1$ and so the map $y\mapsto y^3$ is a permutation of $\F_q$. We write $y\mapsto y^{1/3}$ for its inverse map. We define two multisets as follows:
\begin{align*}
 D_1 & =[x\N_{q^3/q}(\lambda+x^q-x^{q^2})^{1/3}: x\in L_0,\lambda\in \F_q],             \\
 D_2 & =[\beta^{-1} x\N_{q^3/q}(\lambda+x^q-x^{q^2})^{-1/3}: x\in L_0,\lambda\in \F_q],
\end{align*}
where $\beta=-3^{-1}\in\F_q$. Set $\gamma:=\beta^{-3}=-27$.

\begin{lemma}\label{calpha}
 Let $x\in L_0$ and set $z:=x^q-x^{q^2}$. For each $\alpha\in\F_q^*$, set
 \[
  c_\alpha:=|\{\lambda\in \F_q:\, \N_{q^3/q}(\lambda+z)=\alpha\}|+|\{\lambda\in \F_q:\, \gamma \N_{q^3/q}(\lambda+z)^{-1}=\alpha\}|.
 \]
 Then $c_\alpha=1$ or $4$.
\end{lemma}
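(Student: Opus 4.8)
The plan is to realize $\N_{q^3/q}(\lambda+z)$ as a monic depressed cubic in the variable $\lambda$ and then to read off $c_\alpha$ from the factorization types of two shifts of that cubic, using Theorem~\ref{cubic}. First I would record the basic facts about $z=x^q-x^{q^2}$: since $\Tr_{q^3/q}(x)=0$ forces $x\notin\F_q$ (otherwise $3x=0$, so $x=0$), the element $x$ has degree $3$ over $\F_q$; likewise $\Tr_{q^3/q}(z)=0$ and $z\ne 0$, so $z\notin\F_q$. Hence
\[
f(\lambda):=\N_{q^3/q}(\lambda+z)=(\lambda+z)(\lambda+z^q)(\lambda+z^{q^2})=\lambda^3+c\lambda+d
\]
is a genuine cubic in $\lambda$ with no root in $\F_q$, where $c=\Tr_{q^3/q}(z^{1+q})$, which is nonzero by Lemma~\ref{lem_Qz}, and $d=\N_{q^3/q}(z)\ne 0$. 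Writing $N(\mu)$ for the number of roots in $\F_q$ of $f(\lambda)-\mu=\lambda^3+c\lambda+(d-\mu)$, we have $c_\alpha=N(\alpha)+N(\gamma\alpha^{-1})$, so it suffices to pin down the possible multisets $\{N(\alpha),N(\gamma\alpha^{-1})\}$.

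For $q$ odd, the crucial input is the discriminant of $f$. Using that the minimal polynomial of $x$ over $\F_q$ is $X^3+sX-1$ with $s=\Tr_{q^3/q}(x^{1+q})\ne 0$, a short symmetric-function computation (via Newton's identities, using $z=x+2x^q$) would give $c=3s$ and $d^2=-4s^3-27$, whence $\Delta(f)=-4c^3-27d^2=729=\gamma^2$. Consequently the discriminant of $f(\lambda)-\mu$ is $\Delta_\mu=-27(\mu^2-2d\mu-27)$, and a direct check yields $\alpha^2\,\Delta_{\gamma\alpha^{-1}}=\gamma\,\Delta_\alpha$. Since $\gamma=-27=(-3)\cdot 3^2$ is a nonsquare in $\F_q$ by Lemma~\ref{neg3ns}, this identity shows that $\Delta_\alpha$ and $\Delta_{\gamma\alpha^{-1}}$ vanish simultaneously and, when nonzero, have opposite quadratic character. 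Now Theorem~\ref{cubic}(i) gives $N(\mu)=1$ if $\Delta_\mu$ is a nonsquare and $N(\mu)\in\{0,3\}$ if $\Delta_\mu$ is a nonzero square; and if $\Delta_\mu=0$ then $f(\lambda)-\mu$ has a double root $-3(d-\mu)/(2c)\in\F_q$, hence splits completely over $\F_q$ into a double and a simple factor (distinct since $c\ne 0$), so $N(\mu)=2$. Therefore $\{N(\alpha),N(\gamma\alpha^{-1})\}$ is $\{2,2\}$, $\{0,1\}$, or $\{1,3\}$, and in every case $c_\alpha\in\{1,4\}$.

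For $q$ even one has $-x=x$, so $z=x^q+x^{q^2}=\Tr_{q^3/q}(x)+x=x$, and $f(\lambda)=\N_{q^3/q}(\lambda+x)=\lambda^3+s\lambda+1$ is the minimal polynomial of $x$, with $s\ne 0$; moreover $\gamma=1$ in characteristic $2$, so $c_\alpha=N(\alpha)+N(\alpha^{-1})$. I would handle $\alpha=1$ directly: $f(\lambda)=1$ iff $\lambda(\lambda^2+s)=0$, and as squaring permutes $\F_q$ and $s\ne 0$ this has exactly two roots, so $N(1)=2$ and $c_1=4$. For $\alpha\ne 1$, both $\alpha$ and $\alpha^{-1}$ differ from $d=1$, and Theorem~\ref{cubic}(ii) applies to both shifts: $N(\mu)=1$ iff $\Tr_{q/2}\!\big(s^3(1+\mu)^{-2}\big)\ne\Tr_{q/2}(1)$, and $N(\mu)\in\{0,3\}$ otherwise. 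The identity $(1+\alpha)^{-2}+(1+\alpha^{-1})^{-2}=(1+\alpha^2)(1+\alpha)^{-2}=1$ (using $1+\alpha^2=(1+\alpha)^2$) shows the two trace conditions differ by exactly $\Tr_{q/2}(s^3)$; and since $X^3+sX+1$ is irreducible over $\F_q$, Theorem~\ref{cubic}(ii) forces $\Tr_{q/2}(s^3)=\Tr_{q/2}(1)$, while $\Tr_{q/2}(1)=1$ because $q=2^n$ with $n$ odd (as $q\equiv 2\bmod 3$). Hence exactly one of $N(\alpha)$, $N(\alpha^{-1})$ equals $1$ and the other lies in $\{0,3\}$, so again $c_\alpha\in\{1,4\}$.

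The routine parts are the symmetric-function identities $c=3s$ and $d^2=-4s^3-27$, the elementary algebra with $\Delta_\mu$, and the manipulations with the absolute trace. The hard part I expect is spotting the two structural coincidences that force the answer to be exactly $\{1,4\}$: for odd $q$, that $\Delta(f)$ equals $\gamma^2$ with $\gamma$ a nonsquare, which makes the quadratic characters of the two shift-discriminants opposite; and for even $q$, the combination of $(1+\alpha)^{-2}+(1+\alpha^{-1})^{-2}=1$ with $\Tr_{q/2}(s^3)=\Tr_{q/2}(1)=1$, which makes the two trace conditions disagree. Once these are established, the remainder is a short case check with Theorem~\ref{cubic}.
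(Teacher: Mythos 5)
Your proof is correct and follows essentially the same route as the paper's: realize $c_\alpha$ as the total root count of the two shifted depressed cubics, pair up their discriminants, and invoke Theorem~\ref{cubic} together with the nonsquareness of $\gamma=-27$ for odd $q$ and the trace identity for even $q$. The only differences are cosmetic: you encode the discriminant pairing as $\alpha^2\Delta_{\gamma\alpha^{-1}}=\gamma\Delta_\alpha$ where the paper computes $\Delta_1\Delta_2=\gamma^3(\alpha+\gamma\alpha^{-1}-2b)^2$, and in even characteristic you deduce $\Tr_{q/2}(s^3)=\Tr_{q/2}(1)=1$ from the irreducibility of the minimal polynomial of $x$ rather than by the paper's direct expansion of $\Tr_{q/2}(a^3)$.
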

\begin{proof}
 Write $a:=\Tr_{q^3/q}(z^{1+q})$, $b:=\N_{q^3/q}(z)$, and set $u:=-\frac{1}{3}a$. It is clear that $\Tr_{q^3/q}(z)=0$, so  $a\ne 0$ by Lemma \ref{lem_Qz}. The minimal polynomial of $x$ over $\F_q$ is $g(X):=X^3+eX-1$ with $e=\Tr_{q^3/q}(x^{1+q})$. The discriminant of $g$ is  $z^{2+2q+2q^2}=b^2$ by definition, and it equals $-4e^3-27$. So $b^2=-4e^3+\gamma$. On the other hand, we have
 \begin{align*}
  a & =\Tr_{q^3/q}((x^q-x^{q^2})^{1+q})=\Tr_{q^3/q}(x^{q+1}-x^2) \\
    & =\Tr_{q^3/q}(x^{q+1}+x(x^q+x^{q^2}))=3e.
 \end{align*}
 We thus have $b^2-4u^3=\gamma$, which is a nonsquare in $\F_q$ when $q$ is odd by Lemma \ref{neg3ns}.

 For any element $\alpha\in\F_q^*$, let $f_1(X):=X^3+aX+b-\alpha$, $f_2(X):=X^3+aX+b-\beta^{-3}\alpha^{-1}$. Upon expansion we deduce that $$c_\alpha=|\{\lambda\in\F_q:\, f_1(\lambda)=0\}|+|\{\lambda\in\F_q:\, f_2(\lambda)=0\}|.$$
 The discriminants of the two cubic polynomials $f_1$ and $f_2$ are
 \begin{align*}
  \Delta_1 & =-4a^3-27(b-\alpha)^2=\gamma((\alpha-b)^2-4u^3), \\ \Delta_2&=-4a^3-27(b-\beta^{-3}\alpha^{-1})^2=\gamma((\gamma \alpha^{-1}-b)^2-4u^3),
 \end{align*}
 respectively. We claim that either none or both of $\Delta_1,\Delta_2$ are zero, and if the latter occurs then $\alpha^2-2b\alpha+\gamma=0$. We compute that
 \begin{align*}
  \Delta_1\Delta_2= &
  \gamma^2((\alpha-b)^2-4u^3)((\gamma \alpha^{-1}-b)^2-4u^3)                                                                 \\
  =                 & \gamma^2(\alpha^2-2\alpha b+\gamma)(\gamma^2\alpha^{-2}-2\gamma\alpha^{-1} b+\gamma)                   \\
  =                 & \gamma^2(\alpha^2\gamma+\gamma^3\alpha^{-2}-4b\gamma\alpha-4b\gamma^2\alpha^{-1}+2\gamma^2+4b^2\gamma) \\
  =                 & \gamma^3(\alpha+\gamma\alpha^{-1})^2+4b\gamma^3(b-\alpha-\gamma\alpha^{-1})                            \\
  =                 & \gamma^3(\alpha+\gamma\alpha^{-1}-2b)^2.
 \end{align*}
 If $\Delta_1=0$, then $\Delta_1\Delta_2=0$ and so $\alpha-b=b-\gamma\alpha^{-1}$, which in turn implies that $\Delta_2=0$. The converse is also true. The claim is now established.

 We first consider the case where $\Delta_1=\Delta_2=0$, so that $\alpha^2-2b\alpha+\gamma=0$. In this case, $f_1(X)$ has a repeated root $\eta$. If $\eta$ is not in $\F_q$, then $\eta^q$ would also be a repeated root of $f_1(X)$, contradicting the fact that $\deg(f_1)=3$. Hence all the roots of $f_1(x)=0$ lies in $\F_q$. The three roots of $f_1$ can not be all equal: if $f_1(X)=(X-\eta)^3$, then $\eta=0$ by comparing the coefficients of $X^2$, and so $a=0$: a contradiction. To conclude, $f_1(X)$ has two distinct roots in $\F_q$. The same is true for $f_2$ by the same argument. We thus deduce that $c_\alpha=4$ in this case.

 We next consider the case $\Delta_1\Delta_2\ne 0$.  In the case where $q$ is odd, $\Delta_1\Delta_2$ is a nonsquare in $\F_q$ by Lemma~\ref{neg3ns}. That is, exactly one of $\Delta_1$ and $\Delta_2$ is a nonsquare of $\F_q$.  By (i) of Theorem \ref{cubic}, we conclude that one of $f_1$ and $f_2$ has exactly one zero in $\F_q$ and the other has $0$ or $3$ zeros in $\F_q$. It follows that $c_\alpha=1$ or $4$ as desired. In the case where $q$ is even, we have $\gamma=\beta=1$ and  $z=x$. It follows that $a=\Tr_{q^3/q}(x^{1+q})=\Tr_{q^3/q}(x^{-q^2})$ and $b=\N_{q^3/q}(x)=1$ by the fact $x\in L_0$. The two cubic polynomials take the form $X^3+aX+1+\alpha$,  $X^3+aX+1+\alpha^{-1}$, respectively. Since $x\in L_0$, we have  $\Tr_{q^3/q}(x)=0$ and $x^{1+q+q^2}=1$. We compute that
 \begin{align*}
    & \Tr_{q/2}\left(\frac{a^3}{1+\alpha^2}\right)+\Tr_{q/2}\left(\frac{a^3}{1+\alpha^{-2}}\right)
  =\Tr_{q/2}(a^3)                                                                                  \\
  = & \Tr_{q/2}\Big((x^{-1}+x^{-q}+x^{-q^2})^3\Big)=\Tr_{q^3/2}(x^{-3}+x^{q-1}+x^{q^2-1})          \\
  = & \Tr_{q^3/2}(x^{q^2+q+1-3}+1)=\Tr_{q^3/2}(x^{q-1}\cdot x^{q^2-1})+1                           \\
  = & \Tr_{q^3/2}(x^{q-1}x^{-1}(x+x^q))+1= \Tr_{q^3/2}(x^{2(q-1)}+x^{q-1})+1                       \\
  = & 1.
 \end{align*}
 As in the case where $q$ is odd, we deduce that $c_\alpha=1$ or $4$ by using (ii) of Theorem \ref{cubic}.
\end{proof}

\begin{proposition}\label{keyodd}
 There is a subset $E\subseteq \F_{q^3}^*$ of size $\frac{(q+1)^2}{3}$ such that
 $$D_1+D_2=3E+T_0$$
 in the group ring $\mathbb{Z}[\F_{q^3}^*]$.
\end{proposition}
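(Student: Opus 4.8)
The plan is to analyze the multiset $D_1 + D_2$ fiber by fiber over the index set $x \in L_0$. For a fixed $x \in L_0$, write $z := x^q - x^{q^2}$ as in Lemma~\ref{calpha}, and consider the contribution to $D_1 + D_2$ coming from the $2q$ pairs $(x,\lambda)$ with $\lambda$ ranging over $\F_q$ (once for $D_1$, once for $D_2$). The $D_1$-contribution gives the elements $x\,\N_{q^3/q}(\lambda+z)^{1/3}$, and since $y \mapsto y^3$ is a bijection on $\F_q$, writing $\mu := \N_{q^3/q}(\lambda+z)^{1/3}$ these are simply $x\mu$ where $\mu^3 = \N_{q^3/q}(\lambda+z)$. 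Similarly the $D_2$-contribution gives $\beta^{-1}x\,\N_{q^3/q}(\lambda+z)^{-1/3}$, which after the substitution is $x\nu$ with $\nu^3 = \beta^{-3}\N_{q^3/q}(\lambda+z)^{-1} = \gamma\,\N_{q^3/q}(\lambda+z)^{-1}$. So the total fiber over $x$, as a multiset in $\F_{q^3}^*$, is $[\,x\alpha^{1/3} : \alpha \in \F_q^*,\ \text{with multiplicity } c_\alpha\,]$, where $c_\alpha$ is exactly the quantity defined and studied in Lemma~\ref{calpha}. (One should first check that $\N_{q^3/q}(\lambda+z) \ne 0$ for all $\lambda \in \F_q$: indeed $\lambda + z = 0$ would force $z \in \F_q$, contradicting $\Tr_{q^3/q}(z)=0$, $z \ne 0$; and similarly the inverse is defined. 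Also note $\alpha = 0$ cannot occur in either count.)

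Next I would invoke Lemma~\ref{calpha}: each $c_\alpha \in \{1, 4\}$. Let $S_x := \{\alpha \in \F_q^* : c_\alpha = 4\}$. Then the fiber over $x$ equals $[\,x\alpha^{1/3} : \alpha \in \F_q^*\,] + 3\cdot[\,x\alpha^{1/3} : \alpha \in S_x\,]$ in the group ring. As $\alpha$ runs over all of $\F_q^*$, so does $\alpha^{1/3}$ (again by bijectivity of cubing), so $[\,x\alpha^{1/3} : \alpha \in \F_q^*\,] = x\cdot\F_q^*$ as a set. Summing over $x \in L_0$ and using $L_0 \cdot \F_q^* = T_0$ (stated in Section~3.1) together with the fact that this product is "free" — i.e. each element of $T_0$ is hit exactly once, which holds because $|L_0|\cdot|\F_q^*| = (q+1)(q-1) = q^2 - 1 = |T_0|$ — we get $\sum_{x \in L_0} x\cdot\F_q^* = T_0$ in $\mathbb{Z}[\F_{q^3}^*]$. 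Therefore
\[
 D_1 + D_2 = T_0 + 3\sum_{x \in L_0}\ \sum_{\alpha \in S_x} x\alpha^{1/3}.
\]
Defining the multiset $E := [\,x\alpha^{1/3} : x \in L_0,\ \alpha \in S_x\,]$ immediately yields $D_1 + D_2 = 3E + T_0$.

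Finally I would pin down $|E| = \frac{(q+1)^2}{3}$ by a counting argument. The total size of the multiset $D_1 + D_2$ is $2 \cdot |L_0| \cdot q = 2(q+1)q$. On the other hand, from $D_1 + D_2 = 3|E| + |T_0| = 3|E| + q^2 - 1$, we solve $3|E| = 2q(q+1) - (q^2-1) = 2q^2 + 2q - q^2 + 1 = q^2 + 2q + 1 = (q+1)^2$, so $|E| = (q+1)^2/3$. (Since $q \equiv 2 \pmod 3$, $(q+1)^2$ is divisible by $3$, consistent with $E$ being a genuine multiset; equivalently $\sum_{x \in L_0}|S_x| = (q+1)^2/3$.) The one remaining point to address is that the statement asserts $E$ is a \emph{subset} (not just a multiset) of $\F_{q^3}^*$; this requires checking that the pairs $(x,\alpha)$ with $x \in L_0$, $\alpha \in S_x$ give pairwise distinct values $x\alpha^{1/3}$. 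This follows from the freeness of the product $L_0 \cdot \F_q^* = T_0$: if $x\alpha^{1/3} = x'\alpha'^{1/3}$ with $x,x' \in L_0$ and $\alpha^{1/3},\alpha'^{1/3}\in\F_q^*$, then $x = x'$ and $\alpha = \alpha'$.

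**Main obstacle.** The genuinely hard work — showing $c_\alpha \in \{1,4\}$ via the discriminant computation and Theorem~\ref{cubic}, including the separate even-characteristic trace computation — has already been carried out in Lemma~\ref{calpha}. So the remaining difficulty here is mostly bookkeeping: correctly identifying the fiber multiset with the $c_\alpha$ counts after the $y \mapsto y^{1/3}$ substitution, and verifying the freeness of $L_0 \cdot \F_q^* = T_0$ so that $E$ is an honest set. The subtle point to get right is the multiplicity normalization — that "$c_\alpha = 1$ or $4$" translates to "contributes $\alpha^{1/3}$ to the base $\F_q^*$-layer once, plus an extra $3$ copies exactly when $c_\alpha = 4$" — since an off-by-one there would break the clean $3E + T_0$ form.
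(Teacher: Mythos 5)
Your proof is correct and follows essentially the same route as the paper's: both fix $x\in L_0$, identify the fiber of $D_1+D_2$ over $x$ with the multiset of cube roots of $W_x=[\N_{q^3/q}(\lambda+z)]\cup[\gamma\N_{q^3/q}(\lambda+z)^{-1}]$, invoke Lemma~\ref{calpha} to write $W_x=\F_q^*+3L_x^{(3)}$ (your $S_x$ is exactly the paper's $L_x^{(3)}$), and sum over $L_0$ using the freeness of $L_0\cdot\F_q^*=T_0$. Your explicit verification that $E$ is an honest subset (via $|L_0|\,|\F_q^*|=|T_0|$) is a point the paper leaves implicit, but the argument is the same.
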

\begin{proof}
 For $x\in L_0$, set $z:=x^q-x^{q^2}$, and define a multiset
 \begin{equation}\label{eqn_Wx}
    W_x:=[\N_{q^3/q}(\lambda+z): \lambda\in\F_q]\cup[\gamma\N_{q^3/q}(\lambda+z)^{-1}: \lambda\in\F_q].
   \end{equation}
 For any element $\alpha\in\F_q^*$, its multiplicity in $W_x$ equals $c_\alpha$, where $c_\alpha$ is as defined in Lemma \ref{calpha}. We have $c_\alpha\in\{1,4\}$ by the same lemma.
 Therefore, there is a subset $L_x$ of $\F_q^*$ such that $W_x=\F_q^*+3L_x^{(3)}$, where $L_x^{(3)}=\sum_{z\in L_x}z^3$. Moreover,
 it is clear that $|L_x|=\frac{2q-|\F_q^*|}{3}=\frac{q+1}{3}$.

 It is routine to check that $D_1+D_2=\sum_{x\in L_0}xW_x^{(1/3)}$. Set $E:=\sum_{x\in L_0}xL_x$, which is a subset of $\F_{q^3}^*$ of size $(q+1)^2/3$. Then the claim in the proposition follows from the fact $W_x=\F_q^*+3L_x^{(3)}\in \mathbb{Z}[\F_{q^3}^*]$ for $x\in L_0$, and $L_0\cdot \F_q^*=T_0$. The proof is now complete.
\end{proof}

\subsection{The set $\M$}\label{subsec_M}
Let $V=\F_{q^3}\times \F_{q^3}$, which is viewed as a 6-dimensional vector space over $\F_{q}$. Define a map $Q: V\to\F_q$ by
\[Q((x,y))=\Tr_{q^3/q}(xy),\ \forall (x,y)\in V.\]
It is easy to check that $Q$ is a non-degenerate hyperbolic quadratic form on $V$. The quadric defined by $Q$ will be our model for $\Q^+(5,q)$ whose points can be expressed as $\{\<(x,y)\>: Q(x,y)=0\}$. The polar form $f: V\times V\rightarrow \F_q$ of $Q$ is given by
$$f((x,y),(a,b))=\Tr_{q^3/q}(bx+ay).$$
For a point $P=\<(x_0,y_0)\>$, its polar hyperplane $P^\perp$ is given by
\[
 P^\perp=\{\<(x,y)\>: \Tr_{q^3/q}(xy_0+x_0y)=0\}.
\]

Let $w$ be a primitive element of $\F_{q^3}$. Let $C_0$ be the subgroup of $\F_{q^3}^*$ of order $q^2+q+1$. For any $\mu\in C_0$, consider the following map on $\QQ^+(5,q)$ defined by
$$\langle (x,y)\rangle \mapsto \langle(\mu x, \mu^{-1}y)\rangle.$$
Then,  $C_0$ is embedded as a subgroup $i(C_0)$ of $\PGO^+(6,q)$, and it acts semi-regularly on the points of $\QQ^+(5,q)$. So the points of $\QQ^+(5,q)$ are partitioned into orbits of this action; each orbit has length $q^2+q+1$, and the number of orbits is $q^2+1$. We denote the orbit containing the point $\langle (a,b)\rangle$ by $O_{(a,b)}$. Then, all the orbits are $O_{(0,1)}$ and $O_{(1,z)}$, $z\in \F_{q^3}$ with $\Tr_{q^3/q}(z)=0$. The following is our main theorem.

\begin{theorem}\label{mtheven}
 Let $q\equiv 2\pmod{3}$ be a prime power.
 Let $\M=\bigcup_{z\in E}O_{(1,z)}$, where $E$ is defined in Proposition~\ref{keyodd}. Then, the line set ${\mathcal L}$ in $\PG(3,q)$ corresponding to $\M$ under the Klein correspondence forms a Cameron-Liebler line class with parameter $x=(q+1)^2/3$.
\end{theorem}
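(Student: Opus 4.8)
The plan is to verify the character-theoretic criterion in Result~\ref{res2}. Writing $D=\{\lambda v:\lambda\in\F_q^*,\,\langle v\rangle\in\M\}\subset(\F_{q^3}\times\F_{q^3},+)$, I must show $|D|=(q^3-1)x$ and that for every point $P\in\PG(5,q)$ the value $\psi(D)$ is $-x+q^3$ if $P\in\M$ and $-x$ otherwise, where $\psi$ is the non-principal additive character of $V$ principal on $P^\perp$. The size count is immediate: $\M$ is a union of $|E|=(q+1)^2/3$ orbits each of length $q^2+q+1$, so $|\M|=x(q^2+q+1)$ and $|D|=(q^3-1)|\M|/(q^2+q+1)\cdot(q^2+q+1)$... more precisely $|D|=(q-1)|\M|=(q^3-1)x$ since $\gcd(q-1,q^2+q+1)=1$ forces distinct scalar multiples. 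The real work is the character sum.

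The main computation proceeds as follows. Every non-principal additive character of $V=\F_{q^3}\times\F_{q^3}$ has the form $\psi_{(a,b)}((x,y))=\psi_{\F_{q^3}}(\Tr\,(ay+bx))$ for a unique $\langle(a,b)\rangle\in\PG(5,q)$, and it is principal exactly on $\langle(a,b)\rangle^\perp$ by the description of the polar form. So I need $\psi_{(a,b)}(D)$ for each $(a,b)$. Using the $i(C_0)$-invariance of $\M$, write $\psi_{(a,b)}(D)=\sum_{z\in E}\sum_{\mu\in C_0}\sum_{\lambda\in\F_q^*}\psi_{\F_{q^3}}\!\big(\lambda(a\mu^{-1}z+b\mu)\big)$, where I used the parametrization of $O_{(1,z)}$ by $\langle(\mu,\mu^{-1}z)\rangle$. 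The inner sum over $\lambda\in\F_q^*$ is $q\cdot\mathds{1}[\Tr_{q^3/q}(a\mu^{-1}z+b\mu)=0]-1$. The two cases $(a,b)=(0,1)$ and $a=1$ (the orbit $O_{(0,1)}$ is not in $\M$) must be handled, but the generic case is $ab\neq 0$, where after absorbing constants into $\mu$ one reduces to counting, for fixed $c=ab$, the pairs $(z,\mu)$ with $z\in E$, $\mu\in C_0$, and $\Tr_{q^3/q}(\mu^{-1}z+c\mu)=0$. Expanding $\mu\in C_0$ via its coset structure ($C_0\cdot\F_q^*=\F_{q^3}^*$) and invoking Lemma~\ref{partialGS} and Lemma~\ref{chiL0} to turn these trace conditions into Gauss sums, the target quantity becomes an expression in $G_{q^3}(\chi)$ for cubic (and related) characters $\chi$.

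The decisive step is to show that this Gauss-sum expression collapses to the correct constants, and this is precisely where Proposition~\ref{keyodd} enters: the identity $D_1+D_2=3E+T_0$ in $\Z[\F_{q^3}^*]$ is what lets one replace the unwieldy sum over $E$ by the two explicit multisets $D_1,D_2$, whose character sums factor through norm maps and classical Gauss-sum evaluations (e.g.\ via the Davenport--Hasse relation and Lemma~\ref{chiL0}). Concretely, I expect to show that $3\,\chi(E)+\chi(T_0)=\chi(D_1)+\chi(D_2)$ can be evaluated for each relevant $\chi$, and that the cross terms conspire so that $\psi_{(a,b)}(D)$ takes exactly the two values $-x$ and $-x+q^3$, with the value $q^3-x$ occurring precisely when $\langle(a,b)\rangle\in\M$, i.e.\ when $a=1$ and $b\in E\cdot C_0$ (equivalently $b/a$ lies in the appropriate union of $C_0$-cosets). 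The bookkeeping splitting into the cases $q$ even versus $q$ odd, the special points with $ab=0$, and the reconciliation of the $1/3$ versus $-1/3$ exponents in $D_1$ and $D_2$ with the factor $\gamma=-27$ will all need care.

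The hard part will be the honest evaluation of the Gauss-sum expressions arising from $\chi(D_1)$ and $\chi(D_2)$: these involve sums of the form $\sum_{\lambda\in\F_q}\chi\big(\N_{q^3/q}(\lambda+z)^{\pm1/3}\big)$ twisted by a character, and controlling them uniformly in $z\in L_0$ — together with the final step of checking that all the pieces sum to the two prescribed intersection numbers rather than producing extra intersection values — is the technical heart of the argument. I expect these exponential-sum estimates to be exactly what is deferred to the Appendix referenced in the introduction. Everything else (the reduction via Result~\ref{res2}, the orbit parametrization, the application of Lemmas~\ref{partialGS}, \ref{chiL0}, and Proposition~\ref{keyodd}) is structural and should go through without surprises.
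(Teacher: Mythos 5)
Your outline follows the paper's strategy exactly: reduce to the character criterion of Result~\ref{res2}, parametrize $\M$ by the $C_0$-orbits $O_{(1,z)}$, convert the additive character sums to Gauss sums via Lemma~\ref{partialGS} and Lemma~\ref{chiL0}, and use the group-ring identity $D_1+D_2=3E+T_0$ of Proposition~\ref{keyodd} to replace the inaccessible sum over $E$ by sums over the explicit norm-image multisets. But the proposal stops where the proof actually begins. Two essential steps are missing. First, the evaluation itself: after splitting $\psi_{a,b}(D)=S_1+S_2$ according to whether the relevant multiplicative character is trivial on $\F_q^*$, one must evaluate $S_1$ exactly (this uses that $\chi_1^\ell(E)=\frac{q+1}{3q}G(\chi_1^\ell)$, which depends on every fibre $L_x$ having the same size $\frac{q+1}{3}$), and one must reduce $\Sigma_1,\Sigma_2,\Sigma_3$ to the counting quantities $\mu_{z_0}$ and $\mu_{z_1}'$ of Proposition~\ref{odd1.2}. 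You defer all of this to ``the Appendix,'' but it is part of the proof, not a black box you may cite.

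Second, and more importantly, you never identify the mechanism that forces two-valuedness; saying ``the cross terms conspire'' is asserting the theorem rather than proving it. The actual argument is that for $ab\neq 0$ with $\Tr_{q^3/q}(w^{(q-1)t_0})=0$ the constants combine so that $\psi_{a,b}(D)+\frac{(q+1)^2}{3}=\frac{q^3}{3}\bigl(\mu_{z_0}+\mu_{z_1}'-1\bigr)$, and that $\mu_{z_0}+\mu_{z_1}'$ is precisely the multiplicity of $\theta_0^{-3}$ in the multiset $W_{x_0}$ of Lemma~\ref{calpha}; the dichotomy $c_\alpha\in\{1,4\}$, with gap exactly $3$, is what produces a jump of exactly $q^3$, and the multiplicity is $4$ exactly when $\theta_0^{-1}\in L_{x_0}$, i.e.\ when $z_1\in E$, i.e.\ when $(a,b)\in D$. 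This is the step where the specific construction of $E$ pays off, and it is absent from your plan. (A smaller point: your criterion ``$a=1$ and $b\in E\cdot C_0$'' for $\langle(a,b)\rangle\in\M$ is not correct as stated; membership is detected through both $ab$ and $ab^{-1}$, via the element $z_1=w^{-Nu_0+(q-1)t_0}$.)
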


Let $\M$ be defined as in Theorem \ref{mtheven}. Clearly $|\M|=x(q^2+q+1)$ with $x=(q+1)^2/3$. Set $D=\{\lambda v: \lambda\in\F_q^*, \<v\>\in\M\}$. Then
$$|D|=(q-1)|\M|=(q-1)|E|(q^2+q+1)=(q^3-1)\frac{(q+1)^2}{3}.$$
To prove Theorem \ref{mtheven}, we need to show that $D$ has the correct character values as specified in Result~\ref{res2}. Each additive character of $(V,+)$ is of the form $\psi_{a,b}$ for some $(a,b)\in V$, where
$$\psi_{a,b}((x,y))=\psi_{\F_{q}}(f((a,b),(x,y)))=\psi_{\F_{q^3}}(bx+ay),\;\forall (x,y)\in V$$
Here $\psi_{\F_q}$ and $\psi_{\F_{q^3}}$ are the canonical additive characters of $\F_q$ and $\F_{q^3}$, respectively. It is clear that $\psi_{a,b}$ is trivial on the hyperplane  $\<(a,b)\>^\perp$.
By Result \ref{res2}, in order to prove Theorem \ref{mtheven} it suffices to prove the following claim: for any nonzero element $(a,b)$ of $V$, we have
\begin{equation}\label{toprove}
 \psi_{a,b}(D)=\begin{cases}
  -\frac{(q+1)^2}{3}+q^3, & \text{if~}(a,b)\in D, \\
  -\frac{(q+1)^2}{3},     & \text{otherwise}.\end{cases}
\end{equation}
This will be accomplished in the next section.


\section{Proof of the main theorem}\label{sec:const2}

Let $\M$ be defined as in Theorem \ref{mtheven}, and let $D$ be the corresponding subset of nonzero vectors in $V=\F_{q^3}\times\F_{q^3}$. Take the same notation as introduced in Subsection \ref{subsec_M}, and set $N:=q^2+q+1$. To simplify notation, we write $G(\chi)$ for the Gauss sum $G_{q^3}(\chi)$, where $\chi$ is a multiplicative character of $\F_{q^3}^*$. We need to evaluate the character sum $$\psi_{a,b}(D)=\sum_{z\in E}\psi_{a,b}(\F_{q}^*O_{(1,z)})$$
for nonzero $(a,b)\in V$, where $\F_q^\ast O_{(1,z)}=\{(y\mu,y\mu^{-1}z): y \in \F_q^\ast,\mu\in C_0\}$. For $z\in E$, we have
\begin{equation}\label{kloost}
 \psi_{a,b}(\F_{q}^*O_{(1,z)})=\sum_{y\in\F_q^*}\sum_{\mu\in C_0}\psi_{\F_{q^3}}(by\mu+ay\mu^{-1}z).
\end{equation}
The inner sum in the right hand side of (\ref{kloost}) is an incomplete Kloosterman sum. It is in general very difficult to evaluate incomplete Kloosterman sums exactly. Here we are dealing with certain sums $\psi_{a,b}(D)$ of incomplete Kloosterman sums; and for these sums we can evaluate them exactly.

\begin{lemma}\label{ab0}
 If $ab=0$ but $(a,b)\neq (0,0)$, then $\psi_{a,b}(D)=-\frac{(q+1)^2}{3}$.
\end{lemma}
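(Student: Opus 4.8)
The plan is to exploit the symmetry between the two coordinates and reduce to a single clean character-sum computation. First I would observe that the case $a=0$, $b\neq 0$ and the case $b=0$, $a\neq 0$ are handled symmetrically (up to swapping the roles of $x$ and $y$ in $V$ and noting that $E$ and the orbit structure behave compatibly), so it suffices to treat, say, $b=0$ and $a\neq 0$. In that case \eqref{kloost} collapses: the inner double sum becomes $\sum_{y\in\F_q^*}\sum_{\mu\in C_0}\psi_{\F_{q^3}}(ay\mu^{-1}z)$. Writing $t=ay\mu^{-1}z$ and using that $\F_q^*\cdot C_0=\F_{q^3}^*$ (since $\gcd(q-1,q^2+q+1)=1$), the set $\{y\mu^{-1}: y\in\F_q^*,\mu\in C_0\}$ covers $\F_{q^3}^*$ exactly once, so for each fixed $z\in E$ (note $z\neq 0$ since $\Tr_{q^3/q}(z)=0$ would force... actually $0\notin E\subseteq\F_{q^3}^*$) the inner sum is $\sum_{t\in\F_{q^3}^*}\psi_{\F_{q^3}}(azt^{-1}\cdot(\text{reindex}))$; reindexing over all of $\F_{q^3}^*$ gives $\sum_{s\in\F_{q^3}^*}\psi_{\F_{q^3}}(s)=-1$.

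The key step is therefore: $\psi_{a,0}(\F_q^*O_{(1,z)})=-1$ for every $z\in E$, independent of $z$. Summing over $z\in E$ then yields $\psi_{a,0}(D)=\sum_{z\in E}(-1)=-|E|=-\frac{(q+1)^2}{3}$, which is exactly the claimed value. For the symmetric case $a=0$, $b\neq 0$, the inner sum is $\sum_{y\in\F_q^*}\sum_{\mu\in C_0}\psi_{\F_{q^3}}(by\mu)$, and again $\{y\mu: y\in\F_q^*,\mu\in C_0\}=\F_{q^3}^*$ exactly once, so the same reindexing gives $-1$, and summing over $E$ gives $-\frac{(q+1)^2}{3}$ once more. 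One should also check that $(a,b)\notin D$ in this degenerate situation so that the ``otherwise'' branch of \eqref{toprove} is the relevant one: if $ab=0$ with $(a,b)\neq(0,0)$ then $\<(a,b)\>$ is not on the quadric $Q$ in general, but more to the point $D$ consists of scalar multiples of vectors $(\mu, \mu^{-1}z)$ with both coordinates nonzero (as $z\neq 0$), so any vector with a zero coordinate lies outside $D$; hence the value $-\frac{(q+1)^2}{3}$ is consistent with Result~\ref{res2}.

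I expect no serious obstacle here: the whole point of this lemma is that when $ab=0$ the incomplete Kloosterman sum in \eqref{kloost} degenerates to a complete character sum over $\F_{q^3}^*$, and the coprimality $\gcd(q-1,q^2+q+1)=1$ makes the product map $\F_q^*\times C_0\to\F_{q^3}^*$ a bijection. The only mild care needed is the bookkeeping that $0\notin E$ (so that $z\neq 0$ and the substitution $t\mapsto ay\mu^{-1}z$ is invertible for fixed $y,z$ as $\mu$ ranges over $C_0$) and the verification that the product set really is all of $\F_{q^3}^*$ with multiplicity one, which is immediate from $|\F_q^*|\cdot|C_0| = (q-1)(q^2+q+1) = q^3-1 = |\F_{q^3}^*|$ together with $\F_q^*\cap C_0=\{1\}$. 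The harder sums, where $ab\neq 0$ and the Kloosterman sums are genuinely incomplete, are deferred to the subsequent lemmas.
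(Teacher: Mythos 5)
Your proposal is correct and follows essentially the same route as the paper: in both cases the product map $\F_q^*\times C_0\to\F_{q^3}^*$ being a bijection (from $\gcd(q-1,q^2+q+1)=1$) turns each orbit sum into the complete sum $\sum_{x\in\F_{q^3}^*}\psi_{\F_{q^3}}(x)=-1$, and summing over $E$ gives $-|E|=-\frac{(q+1)^2}{3}$. The extra observation that $(a,b)\notin D$ when a coordinate vanishes is a sensible consistency check, though the paper leaves it implicit.
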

\begin{proof}
 Recall that $\F_{q^3}^*=C_0\cdot\F_q^*$. When $a=0$ and $b\neq 0$,
 $$
  \psi_{0,b}(\F_{q}^*O_{(1,z)})=\sum_{\theta\in\F_q^*}\sum_{\mu\in C_0}\psi_{\F_{q^3}}(b\theta\mu)=\sum_{x\in\F_{q^3}^*}\psi_{\F_{q^3}}(bx)=-1.$$
 The computations in the case when $a\neq 0$ and $b=0$ are similar. This completes the proof.
\end{proof}

From now on, we assume that $ab\neq 0$.  Let $\chi$ be a generator of the multiplicative character group of $\F_{q^3}^*$, and set
\[
 \chi_1:=\chi^{q-1},\quad \chi_2:=\chi^{N}.
\]
The orders of $\chi_1$ and $\chi_2$ are $N$ and $q-1$, respectively. For a subset $Y$ of $\F_{q^3}^*$ (possibly a multiset) and a multiplicative character $\chi^i$, we write $\chi^i(Y):=\sum_{x\in Y}\chi^i(x)$. It is well known that
\begin{equation}\label{sumC0}
 \chi^{k}(C_0)=\begin{cases}N,\quad &\textup{if }k\equiv 0\pmod{N}{\color{red},}\\0,\quad &\textup{otherwise{\color{red},}}\end{cases}
\end{equation}
and
\begin{equation}\label{sumFqs}
 \chi^{k}(\F_q^*)=\begin{cases}q-1,\quad &\textup{if }k\equiv 0\pmod{q-1}{\color{red},}\\0,\quad &\textup{otherwise{\color{red}.}}\end{cases}
\end{equation}

We introduce two auxiliary exponential sums:
\[
 S_1=\frac{1}{q^3-1}\sum_{\ell=0}^{N-1}G(\chi_1^{-\ell})^2\chi_1^\ell(ab)\chi_1^\ell(E) \label{1.1}
\]
and
\[
 S_2=\frac{1}{q^3-1}\sum_{i=1}^{q-2}\sum_{\ell=0}^{N-1}G(\chi_2^{i}\chi_1^{-\ell})G(\chi_2^{-i}\chi_1^{-\ell})\chi_1^\ell(ab)\chi_2^{i}(ab^{-1})\chi_2^{i}\chi_1^\ell(E).
\]

\begin{lemma}
It holds that $\psi_{a,b}(D)=S_1+S_2$.
\end{lemma}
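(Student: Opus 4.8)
The plan is to expand the incomplete Kloosterman sum in \eqref{kloost} by writing the canonical additive character of $\F_{q^3}$ as a linear combination of multiplicative characters via Lemma~\ref{orthchar}, and then to collapse the resulting double sum over $\widehat{\F_{q^3}^*}\times\widehat{\F_{q^3}^*}$ using the orthogonality relations \eqref{sumC0} and \eqref{sumFqs}. Recall that $ab\neq0$ is assumed here. First I would factor $\psi_{\F_{q^3}}(by\mu+ay\mu^{-1}z)=\psi_{\F_{q^3}}(by\mu)\,\psi_{\F_{q^3}}(ay\mu^{-1}z)$; since $ab\neq0$, $z\in E\subseteq\F_{q^3}^*$, $y\in\F_q^*$ and $\mu\in C_0$, both arguments are nonzero, so Lemma~\ref{orthchar} applies to each factor and introduces a sum over all characters of $\F_{q^3}^*$. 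Writing each character as $\chi_1^{\ell}\chi_2^{i}$ with $0\le\ell<N$, $0\le i<q-1$ (a bijective parametrisation since $\gcd(N,q-1)=1$), and using that $\chi_1$ is trivial on $\F_q^*$ while $\chi_2$ is trivial on $C_0$, each summand factors as a character of $\mu$ built from powers of $\chi_1$ times a character of $y$ built from powers of $\chi_2$.

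Interchanging the order of summation, the sum over $\mu\in C_0$ contributes $\sum_{\mu\in C_0}\chi_1^{\ell-\ell'}(\mu)$, which by \eqref{sumC0} vanishes unless $\ell=\ell'$, and the sum over $y\in\F_q^*$ contributes $\sum_{y\in\F_q^*}\chi_2^{i+i'}(y)$, which by \eqref{sumFqs} vanishes unless $i'\equiv-i\pmod{q-1}$. Each surviving pair $(\ell',i')=(\ell,-i)$ yields a factor $N(q-1)=q^3-1$, cancelling one power of $(q^3-1)^{-1}$ coming from the two applications of Lemma~\ref{orthchar}. What remains, after also carrying out the summation $\sum_{z\in E}$, is the single sum
\[
\psi_{a,b}(D)=\frac{1}{q^3-1}\sum_{\ell=0}^{N-1}\sum_{i=0}^{q-2}G(\chi_1^{-\ell}\chi_2^{-i})\,G(\chi_1^{-\ell}\chi_2^{i})\,\chi_1^{\ell}(ab)\,\chi_2^{i}(ba^{-1})\,(\chi_1^{\ell}\chi_2^{-i})(E).
\]

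It then remains to match this against $S_1$ and $S_2$. The $i=0$ part of the sum is, term by term, exactly the expression defining $S_1$. For the $i\neq0$ part, the substitution $i\mapsto q-1-i$ permutes $\{1,\dots,q-2\}$ and fixes both the Gauss-sum product $G(\chi_1^{-\ell}\chi_2^{-i})G(\chi_1^{-\ell}\chi_2^{i})$ and $\chi_1^{\ell}(ab)$, while sending $\chi_2^{i}(ba^{-1})$ to $\chi_2^{i}(ab^{-1})$ and $(\chi_1^{\ell}\chi_2^{-i})(E)$ to $(\chi_1^{\ell}\chi_2^{i})(E)$; after this relabelling the $i\neq0$ part is exactly $S_2$. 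Hence $\psi_{a,b}(D)=S_1+S_2$. The main obstacle is bookkeeping rather than conceptual: one must track carefully which multiplicative character is the argument of which Gauss sum throughout the expansion, and reconcile the placement of $ab^{-1}$ versus $ba^{-1}$ and of the character value of $E$ with the precise definitions of $S_1$ and $S_2$ — in particular, noticing that the reindexing $i\mapsto q-1-i$ is exactly what is needed to bring the $i\neq0$ contribution into the stated form of $S_2$.
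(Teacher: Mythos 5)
Your proof is correct and follows essentially the same route as the paper: expand both additive characters via Lemma~\ref{orthchar}, kill all but a one-parameter family of character pairs using orthogonality over $C_0$ and $\F_q^*$, and split off the $i=0$ terms as $S_1$. The only difference is organizational — you impose the Chinese-remainder parametrization $\chi_1^\ell\chi_2^i$ from the outset (so the surviving pairs are immediately $(\ell'=\ell,\ i'=-i)$ and the final reindexing $i\mapsto q-1-i$ recovers $S_2$), whereas the paper works with generic exponents $\chi^i,\chi^j$ and switches to the $Ni+(q-1)\ell$ parametrization only at the end; both computations check out.
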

\begin{proof}
 By Lemma~\ref{orthchar},  we have
 \begin{align}
  \psi_{a,b}(\F_{q}^*O_{(1,z)})= & \sum_{\theta\in\F_q^*}\sum_{\mu\in C_0}\psi_{\F_{q^3}}(b\theta\mu+a\theta\mu^{-1}z)\nonumber                                                                                   \\
  =                              & \frac{1}{(q^3-1)^2}\sum_{i=0}^{q^3-2}\sum_{j=0}^{q^3-2}\sum_{\theta\in\F_q^*}\sum_{\mu\in C_0}G(\chi^{-i})\chi^i(b\theta\mu)G(\chi^{-j})\chi^j(a\theta\mu^{-1}z)\nonumber      \\
  =                              & \frac{1}{(q^3-1)^2}\sum_{i=0}^{q^3-2}\sum_{j=0}^{q^3-2}\sum_{\theta\in\F_q^*}G(\chi^{-i})G(\chi^{-j})\chi^i(b\theta)\chi^j(a\theta z)\sum_{\mu\in C_0}\chi^{i-j}(\mu)\nonumber \\
  \overset{\eqref{sumC0}}{=}     & \frac{N}{(q^3-1)^2}\sum_{j=0}^{q^3-2}\sum_{h=0}^{q-2}\sum_{\theta\in\F_q^*}G(\chi^{-j-Nh})G(\chi^{-j})\chi^{j+Nh}(b\theta)\chi^j(a\theta z)\nonumber                           \\
  =                              & \frac{N}{(q^3-1)^2}\sum_{j=0}^{q^3-2}\sum_{h=0}^{q-2}G(\chi^{-j-Nh})G(\chi^{-j})\chi^{j+Nh}(b)\chi^j(a z)\chi^{2j+Nh}(\F_q^*).\nonumber                                        
 \end{align}
 By \eqref{sumFqs}, $\chi^{2j+Nh}(\F_q^*)=q-1$ if and only if $2j+Nh\equiv 0\pmod{q-1}$, i.e., $h\equiv \frac{2j(q-2)}{3}\pmod{q-1}$. We thus have
 \[\psi_{a,b}(\F_{q}^*O_{(1,z)})
  =\frac{1}{q^3-1}\sum_{j=0}^{q^3-2}G(\chi^{-j-\frac{2j(q-2)N}{3}})G(\chi^{-j})\chi^{j+\frac{2j(q-2)N}{3}}(b)\chi^j(a z).
 \]
 Since $N=q^2+q+1$ and $\gcd(N,q-1)=1$, any integer $j\in[0, q^3-2]$ can be uniquely written as $j=Ni+(q-1)\ell$ for some integers $0\le i\le q-2$ and $0\le \ell\le q^2+q$ by the Chinese Remainder Theorem. Moreover, with $j=Ni+(q-1)\ell$, we have
 \[
  j+\frac{2j(q-2)N}{3}\equiv-Ni+(q-1)\ell\pmod{q^3-1}
 \]
 by the fact $\frac{(q-2)}{3}N\equiv -1\pmod{q-1}$. By rewriting  $j$ as $Ni+(q-1)\ell$ we deduce that
 \begin{align}
  \psi_{a,b}(\F_{q}^*O_{(1,z)})= & \frac{1}{q^3-1}\sum_{i=0}^{q-2}\sum_{\ell=0}^{N-1}G(\chi_2^{i}\chi_1^{-\ell})G(\chi_2^{-i}\chi_1^{-\ell})\chi_2^{-i}\chi_1^\ell(b)\chi_2^{i}\chi_1^\ell(az) \nonumber  \\
  =                              & \frac{1}{q^3-1}\sum_{\ell=0}^{N-1}G(\chi_1^{-\ell})^2\chi_1^\ell(b)\chi_1^\ell(az) \nonumber                                                                           \\
                                 & +\frac{1}{q^3-1}\sum_{i=1}^{q-2}\sum_{\ell=0}^{N-1}G(\chi_2^{i}\chi_1^{-\ell})G(\chi_2^{-i}\chi_1^{-\ell})\chi_2^{-i}\chi_1^\ell(b)\chi_2^{i}\chi_1^\ell(az).\nonumber
 \end{align}
 Taking summation over $z\in E$, we get the desired equality $\psi_{a,b}(D)=S_1+S_2$.
\end{proof}

We now explicitly evaluate $S_1$ and $S_2$. Write $ab=w^{Ns_0+(q-1)t_0}$ for some $s_0\in\{0,1,\ldots,q-2\}$ and $t_0\in \{0,1,\ldots,q^2+q\}$.

\begin{lemma}\label{abne0p1}
It holds that
 $$S_1=\begin{cases}
   \displaystyle\frac{(q+1)(q^3-q^2+1)}{3(q-1)}, & \emph{if~}\emph{Tr}_{q^3/q}(w^{(q-1)t_0})=0, \\
   \displaystyle-\frac{(q+1)^2}{3},              & \emph{otherwise}.
  \end{cases}$$
\end{lemma}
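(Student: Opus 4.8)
The plan is to exploit the group-ring identity $D_1+D_2=3E+T_0$ from Proposition~\ref{keyodd} to evaluate $\chi_1^\ell(E)$ in closed form, and then to collapse $S_1$ into a complete character sum over $\F_q^*$. Two elementary observations drive everything. First, $\chi_1=\chi^{q-1}$ has order $N=q^2+q+1$, so its restriction to $\F_q^*$ is trivial. Second, since $\gcd(q-1,3)=1$, for each $x\in L_0$ and each $\lambda\in\F_q$ the element $\lambda+x^q-x^{q^2}$ is a nonzero element of $\F_{q^3}$ not lying in $\F_q$ (otherwise its trace would force it to be $0$ by Lemma~\ref{lem_Qz}), so $\N_{q^3/q}(\lambda+x^q-x^{q^2})^{\pm1/3}\in\F_q^*$. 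Hence every entry of the multisets $D_1$ and $D_2$ is an $\F_q^*$-multiple of some $x\in L_0$, and $\chi_1^\ell$ annihilates all such scalar factors; thus $\chi_1^\ell(D_1)=\chi_1^\ell(D_2)=q\,\chi_1^\ell(L_0)$.

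Next I would note that $L_0$ is a set of $q+1$ coset representatives of $\F_q^*$ in $\F_{q^3}^*$, one from each coset whose elements have trace $0$ (being in $T_0$ is a coset property since $\Tr_{q^3/q}(x\theta)=\theta\,\Tr_{q^3/q}(x)$). Therefore Lemma~\ref{chiL0} applies and gives $\chi_1^\ell(L_0)=G(\chi_1^\ell)/q$ for $\ell\not\equiv0\pmod N$, while $\chi_1^0(L_0)=q+1$; likewise $\chi_1^\ell(T_0)=(q-1)\chi_1^\ell(L_0)$ for all $\ell$. Feeding these into $3\chi_1^\ell(E)=\chi_1^\ell(D_1)+\chi_1^\ell(D_2)-\chi_1^\ell(T_0)$ gives
\[
\chi_1^\ell(E)=\frac{q+1}{3q}\,G(\chi_1^\ell)\ \ (\ell\not\equiv0\bmod N),\qquad \chi_1^0(E)=\frac{(q+1)^2}{3}.
\]
Now I substitute into $S_1$. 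The $\ell=0$ term contributes $\frac{(q+1)^2}{3}$ since $G(\chi_1^0)=-1$. For $\ell\neq0$, because $N$ is odd we have $\chi_1^\ell(-1)=1$, so $G(\chi_1^\ell)G(\chi_1^{-\ell})=q^3$ and hence $G(\chi_1^{-\ell})^2G(\chi_1^\ell)=q^3\,G(\chi_1^{-\ell})$; the nontrivial part of $S_1$ thus equals $\frac{(q+1)q^2}{3}\sum_{\ell=1}^{N-1}G(\chi_1^{-\ell})\chi_1^\ell(ab)$. Applying Lemma~\ref{partialGS} with the index-$N$ subgroup of $\F_{q^3}^*$, which is $\F_q^*$ (this is the place where one must be careful that it is $\F_q^*$, not $C_0$), the complete sum $\sum_{\ell=0}^{N-1}G(\chi_1^{-\ell})\chi_1^\ell(ab)$ equals $N\sum_{\theta\in\F_q^*}\psi_{\F_{q^3}}(ab\theta)=N\sum_{\theta\in\F_q^*}\psi_{\F_q}(\theta\,\Tr_{q^3/q}(ab))$, which is $N(q-1)$ if $\Tr_{q^3/q}(ab)=0$ and $-N$ otherwise. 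Removing the $\ell=0$ term $G(\chi_1^0)\chi_1^0(ab)=-1$ leaves $\sum_{\ell=1}^{N-1}G(\chi_1^{-\ell})\chi_1^\ell(ab)$ equal to $q^3$ in the first case and $-q(q+1)$ in the second.

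Finally, writing $ab=w^{Ns_0+(q-1)t_0}$, the factor $w^{Ns_0}$ lies in $\F_q^*$ (as $w^N$ generates $\F_q^*$), so $\Tr_{q^3/q}(ab)=0$ if and only if $\Tr_{q^3/q}(w^{(q-1)t_0})=0$; this matches the case distinction in the statement. Substituting the two values of the inner sum into $S_1=\frac{1}{q^3-1}\left(\frac{(q+1)^2}{3}+\frac{(q+1)q^2}{3}\sum_{\ell=1}^{N-1}G(\chi_1^{-\ell})\chi_1^\ell(ab)\right)$ and simplifying --- in the first case using $(q^3-q^2+1)(q^2+q+1)=q^5+q+1$, in the second case factoring out $q^3-1$ --- yields exactly the two claimed values. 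I do not foresee a real obstacle: the argument is a chain of substitutions, and the only genuine care needed is the bookkeeping that keeps the $\F_q^*$-scalars (the cube roots of norms, $\beta^{-1}$, and $w^{Ns_0}$) invisible to $\chi_1^\ell$, together with the correct identification of the subgroup appearing in Lemma~\ref{partialGS}.
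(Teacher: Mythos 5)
Your proposal is correct and takes essentially the same route as the paper: both reduce the problem to the identity $\chi_1^\ell(E)=\frac{q+1}{3q}G(\chi_1^\ell)$ for $\ell\not\equiv 0\pmod N$, then use $G(\chi_1^\ell)G(\chi_1^{-\ell})=q^3$ and Lemma~\ref{partialGS} applied to the index-$N$ subgroup $\F_q^*$ of $\F_{q^3}^*$ to land on $\psi_{\F_{q^3}}(ab\F_q^*)$ and the trace condition on $w^{(q-1)t_0}$. The only (harmless) difference is that you obtain $\chi_1^\ell(E)$ from the group-ring identity $D_1+D_2=3E+T_0$ together with $\chi_1^\ell(D_1)=\chi_1^\ell(D_2)=q\,\chi_1^\ell(L_0)$ and $\chi_1^\ell(T_0)=(q-1)\chi_1^\ell(L_0)$, whereas the paper reads it off from the explicit decomposition $E=\bigcup_{x\in L_0}xL_x$ in the proof of Proposition~\ref{keyodd}; you also make explicit the point $\chi_1^\ell(-1)=1$ (since $N$ is odd) that the paper leaves implicit.
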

\begin{proof}
 From the proof of Proposition \ref{keyodd}, we see that $E=\bigcup_{x\in L_0}x L_x$, where each $L_x$ is a subset of  $\F_q^*$ of size $\frac{q+1}{3}$. By Lemma~\ref{chiL0}, for any $1\le\ell\le N-1$ we have
 \begin{align}
  \chi_1^\ell(E)
  =\sum_{x\in L_0}\sum_{y\in L_x}\chi_1^\ell(xy)=\frac{q+1}{3}\sum_{x\in L_0}\chi_1^\ell(x)=\frac{(q+1)}{3q}G(\chi_1^\ell).\nonumber%
 \end{align}
 Together with the fact $G(\chi_1^{-\ell})G(\chi_1^{\ell})=q^3$ for $1\le\ell\le N-1$, we have
 \begin{align}
  S_1= &                                                                                                                                             
  \frac{|E|}{q^3-1}+\frac{1}{q^3-1}\sum_{\ell=1}^{N-1}G(\chi_1^{-\ell})^2\chi_1^\ell(ab)\chi_1^\ell(E)\nonumber                                      \\
  =    & \frac{(q+1)^2}{3(q^3-1)}+\frac{q+1}{3q(q^3-1)}\sum_{\ell=1}^{N-1}G(\chi_1^{-\ell})G(\chi_1^{-\ell})G(\chi_1^{\ell})\chi_1^\ell(ab)\nonumber \\
  =    & \frac{(q+1)^2}{3(q^3-1)}+\frac{(q+1)q^2}{3(q^3-1)}\sum_{\ell=1}^{N-1}G(\chi_1^{-\ell})\chi_1^\ell(ab).\nonumber                             
 \end{align}
 By Lemma~\ref{partialGS} and the fact that $G(\chi_1^0)=-1$,
 we have
 \[
  S_1=\frac{(q+1)^2}{3(q^3-1)}+\frac{(q+1)q^2}{3(q-1)}
  \left( \psi_{\F_{q^3}}(ab\F_q^*)+\frac{1}{N}\right).
 \]
 We compute
 \begin{equation}\label{eq:sub1}
  \psi_{\F_{q^3}}(ab\F_q^*)=\psi_{\F_q}(w^{Ns_0}\Tr_{q^3/q}(w^{(q-1)t_0})\F_q^*)=\begin{cases}
   q-1, & \text{if~}\Tr_{q^3/q}(w^{(q-1)t_0})=0, \\
   -1,  & \text{otherwise}.
  \end{cases}
 \end{equation}
 Hence we finally obtain
 \[
  S_1=
  \begin{cases}
   \frac{(q+1)(q^3-q^2+1)}{3(q-1)}, & \text{if~}\Tr_{q^3/q}(w^{(q-1)t_0})=0, \\
   -\frac{(q+1)^2}{3},              & \text{otherwise}.
  \end{cases}
 \]
 This completes the proof of the lemma.
\end{proof}

Next we evaluate $S_2$. By the definition of $E$, we have
$$ \chi_2^i\chi_1^\ell(E)=\frac{1}{3}\left(
 \chi_2^i\chi_1^\ell(D_1)+ \chi_2^i\chi_1^\ell(D_2)- \chi_2^i\chi_1^\ell(T_0)
 \right).$$
Therefore $S_2=\Sigma_1+\Sigma_2+\Sigma_3$, where
\begin{align*}
 \Sigma_1 & =\frac{1}{3(q^3-1)}\sum_{i=1}^{q-2}\sum_{\ell=0}^{N-1}G(\chi_2^{i}\chi_1^{-\ell})G(\chi_2^{-i}\chi_1^{-\ell})\chi_1^\ell(ab)\chi_2^{i}(ab^{-1})\chi_2^{i}\chi_1^\ell(D_1),  \\
 \Sigma_2 & =\frac{1}{3(q^3-1)}\sum_{i=1}^{q-2}\sum_{\ell=0}^{N-1}G(\chi_2^{i}\chi_1^{-\ell})G(\chi_2^{-i}\chi_1^{-\ell})\chi_1^\ell(ab)\chi_2^{i}(ab^{-1})\chi_2^{i}\chi_1^\ell(D_2),  \\
 \Sigma_3 & =\frac{-1}{3(q^3-1)}\sum_{i=1}^{q-2}\sum_{\ell=0}^{N-1}G(\chi_2^{i}\chi_1^{-\ell})G(\chi_2^{-i}\chi_1^{-\ell})\chi_1^\ell(ab)\chi_2^{i}(ab^{-1})\chi_2^{i}\chi_1^\ell(T_0). \\
\end{align*}


Write $ab=w^{Ns_0+(q-1)t_0}$, $ab^{-1}=w^{Nu_0+(q-1)v_0}$, and set $z_0=w^{Nu_0+(q-1)t_0}$ and $z_1=w^{-Nu_0+(q-1)t_0}$. The next proposition gives the values of $\Sigma_1$, $\Sigma_2$ and $\Sigma_3$.
\begin{proposition}\label{odd1.2}
 For each $z\in T_0$, define
 \begin{align*}
  \mu_z:=  & |\{(y,\lambda)\in C_0\times \F_q: y-(z^q-z^{q^2})+\lambda=0\}|,      \\
  \mu_z':= & |\{(y,\lambda)\in C_0\times \F_q: y-\beta(z^q-z^{q^2})+\lambda=0\}|,
 \end{align*}
where $\beta=-3^{-1}\in \F_q$. Then it holds that
 \begin{align*}
  (\Sigma_1,\Sigma_2,\Sigma_3)=\begin{cases}
   (\frac{q^3}{3}\mu_{z_0}-\frac{q^4}{3(q-1)},\,\frac{q^3}{3}\mu_{z_1}'-\frac{q^4}{3(q-1)},\,0), & \emph{if~}w^{(q-1)t_0}\in T_0, \\
   (0,\,0,\,0),                                                                                  & \emph{otherwise.}
  \end{cases}
 \end{align*}
\end{proposition}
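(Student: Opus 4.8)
The plan is to handle $\Sigma_3$ first, then $\Sigma_1$ (the argument for $\Sigma_2$ being parallel). For $\Sigma_3$, I would note that $|L_0|\,|\F_q^*|=(q+1)(q-1)=|T_0|$ and $L_0\cdot\F_q^*=T_0$ force $(x,c)\mapsto xc$ to be a bijection $L_0\times\F_q^*\to T_0$, so $\chi_2^i\chi_1^\ell(T_0)=\chi_2^i\chi_1^\ell(L_0)\,\chi_2^i\chi_1^\ell(\F_q^*)$. As $\chi_1$ has order $N$ it is trivial on $\F_q^*$, whereas for $1\le i\le q-2$ the restriction of $\chi_2^i$ to $\F_q^*$ is nontrivial; hence $\chi_2^i\chi_1^\ell(\F_q^*)=0$, so $\chi_2^i\chi_1^\ell(T_0)=0$ for all $\ell$ and all such $i$, and therefore $\Sigma_3=0$.

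For $\Sigma_1$, I would first simplify $\chi_2^i\chi_1^\ell(D_1)$. Writing $\psi_0$ for the restriction of $\chi$ to $\F_q^*$ (a generator of $\widehat{\F_q^*}$), one has $\chi_2=\psi_0\circ\N_{q^3/q}$ since $\chi_2(x)=\chi(x^N)=\chi(\N_{q^3/q}(x))$. This has two consequences: the cube roots are absorbed, $\chi_2^i(\N_{q^3/q}(u)^{1/3})=\psi_0^i\big(\N_{q^3/q}(\N_{q^3/q}(u)^{1/3})\big)=\psi_0^i(\N_{q^3/q}(u))=\chi_2^i(u)$ for $u\in\F_{q^3}^*$; and $\chi_2^i(x)=\psi_0^i(\N_{q^3/q}(x))=1$ for $x\in L_0$. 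Together with the triviality of $\chi_1^\ell$ on $\F_q^*$ and the fact that $\lambda+x^q-x^{q^2}\ne 0$ for $x\in L_0$ (argued as in Lemma~\ref{lem_Qz}), this yields
\[
 \chi_2^i\chi_1^\ell(D_1)=\sum_{x\in L_0}\chi_1^\ell(x)\sum_{\lambda\in\F_q}\chi_2^i\big(\lambda+x^q-x^{q^2}\big),
\]
and likewise $\chi_2^i\chi_1^\ell(D_2)=\chi_2^i(\beta^{-1})\sum_{x\in L_0}\chi_1^\ell(x)\sum_{\lambda\in\F_q}\chi_2^{-i}\big(\lambda+x^q-x^{q^2}\big)$.

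Next I would substitute these into $\Sigma_1$ and re-insert the $i=0$ term: the resulting double sum over $(i,\ell)$ is precisely the one produced in Section~\ref{sec:const2}, and running that computation backwards recognizes it as an incomplete Kloosterman sum, so that
\[
 \Sigma_1=\frac13\sum_{z\in D_1}F(z)-\frac13(i=0\text{ part}),\qquad F(z):=\sum_{\theta\in\F_q^*}\sum_{\mu\in C_0}\psi_{\F_{q^3}}\big(b\theta\mu+a\theta\mu^{-1}z\big),
\]
with $D_1$ summed with multiplicity. Summing over $\theta$ collapses $F$ into a point count, $F(z)=qM(z)-N$ with $M(z):=|\{\mu\in C_0:\Tr_{q^3/q}(b\mu+a\mu^{-1}z)=0\}|$. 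The $i=0$ part $\frac{1}{q^3-1}\sum_\ell G(\chi_1^{-\ell})^2\chi_1^\ell(ab)\chi_1^\ell(D_1)$ is elementary: $\chi_1^\ell(D_1)=q\,\chi_1^\ell(L_0)$, which equals $G(\chi_1^\ell)$ for $1\le\ell\le N-1$ by Lemma~\ref{chiL0}, and Lemma~\ref{partialGS} then shows it to be $\frac{q}{q-1}+\frac{q^3}{q-1}\psi_{\F_{q^3}}(ab\,\F_q^*)$, hence $q^3+\frac{q}{q-1}$ when $\Tr_{q^3/q}(w^{(q-1)t_0})=0$ and $-q(q+1)$ otherwise (using $\Tr_{q^3/q}(ab)=w^{Ns_0}\Tr_{q^3/q}(w^{(q-1)t_0})$). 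After this bookkeeping the proposition reduces to $\sum_{z\in D_1}M(z)=q^2\mu_{z_0}+q(q+1)^2$ (reading $\mu_{z_0}=0$ when $w^{(q-1)t_0}\notin T_0$) and its analogue $\sum_{z\in D_2}M(z)=q^2\mu_{z_1}'+q(q+1)^2$. To prove the first, I would substitute $\mu\mapsto\mu x$ inside $M$ (valid since $L_0\subseteq C_0$) and interchange the $\lambda$- and $\mu$-sums to obtain
\[
 \sum_{z\in D_1}M(z)=\sum_{x\in L_0}\sum_{\mu\in C_0}\Big|\Big\{\lambda\in\F_q:\,\Tr_{q^3/q}(bx\mu)+\N_{q^3/q}(\lambda+x^q-x^{q^2})^{1/3}\Tr_{q^3/q}(a\mu^{-1})=0\Big\}\Big|,
\]
split according to whether $\Tr_{q^3/q}(a\mu^{-1})$ vanishes (the vanishing part being a clean count of pairs $(x,\mu)$), and note that in the non-vanishing part the inner condition pins $\N_{q^3/q}(\lambda+x^q-x^{q^2})$ to the cube of a prescribed ratio of traces. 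Reorganizing — using that cubing permutes $\F_q$ because $q\equiv 2\bmod 3$, and that $\chi_1$ sees only the $\F_q^*$-coset of $ab$ (its ``$t_0$-part'') while $\chi_2$ sees only $\N_{q^3/q}(ab^{-1})$ (its ``$u_0$-part''), so the data package into $z_0=w^{Nu_0+(q-1)t_0}$ — one recovers the count defining $\mu_{z_0}$. The $D_2$-computation is the same modulo the extra $\chi_2^i(\beta^{-1})$ and the $\beta^{-1}$ inside $D_2$, which generate $\mu_{z_1}'$ with $z_1=w^{-Nu_0+(q-1)t_0}$, the sign change on the $N$-part coming from the $\chi_2^{-i}$ in $\chi_2^i\chi_1^\ell(D_2)$.

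I expect the main obstacle to be this last reorganization, i.e.\ identifying $\sum_{z\in D_1}M(z)$ with $q^2\mu_{z_0}+q(q+1)^2$. The cube-root factors $\N_{q^3/q}(\lambda+x^q-x^{q^2})^{1/3}$ cannot be removed from the Kloosterman sum $F$ on their own (only after combining $D_1$ with $D_2$ do the multisets $W_x$ of Proposition~\ref{keyodd} become uniform), so one is forced to evaluate $\sum_{x\in L_0}$ of the root-counting functions $\alpha\mapsto|\{\lambda\in\F_q:\N_{q^3/q}(\lambda+x^q-x^{q^2})=\alpha\}|$ directly, keeping tight control of additive-character sums over affine lines of $\F_{q^3}$ composed with the homomorphism $\nu\mapsto\N_{q^3/q}(\nu)^{1/3}$ — several of which are the exponential-sum computations deferred to the Appendix — while simultaneously tracking all constant contributions so that the three totals come out exactly as $\big(\frac{q^3}{3}\mu_{z_0}-\frac{q^4}{3(q-1)},\,\frac{q^3}{3}\mu_{z_1}'-\frac{q^4}{3(q-1)},\,0\big)$.
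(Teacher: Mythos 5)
Your treatment of $\Sigma_3$ is complete and correct, and coincides with the paper's. Your reduction of $\Sigma_1$ is also sound as far as it goes: the identities $\chi_2^i(\N_{q^3/q}(u)^{1/3})=\chi_2^i(u)$ and $\chi_2^i(x)=1$ for $x\in L_0$ are right, the recognition of $3\Sigma_1$ plus the $i=0$ term as $\sum_{z\in D_1}\psi_{a,b}(\F_q^*O_{(1,z)})$ is a valid reversal of the computation in Section 4, and I have checked that your bookkeeping correctly reduces the claimed value of $\Sigma_1$ to the counting identity $\sum_{z\in D_1}M(z)=q^2\mu_{z_0}+q(q+1)^2$ (respectively $q(q+1)^2$ when $w^{(q-1)t_0}\notin T_0$). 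This is a genuinely different framing from the paper's, which never returns to the Kloosterman side: it uses instead the Gauss-sum duality $G(\chi_2^i\chi_1^{-\ell})\chi_2^i\chi_1^{\ell}(D_1)=\sum_{z}\chi_2^i\chi_1^{-\ell}(z)\psi_{\F_{q^3}}(zD_3)$ with $D_3=\beta D_2$, and the whole argument rests on the exact evaluation of $\psi_{\F_{q^3}}(zD_3)$ in Lemma~\ref{chvD3}, which shows this quantity takes only the values $q^2+q$ and $-1+\psi_{\F_{q^3}}(eC_0)$ according to the $\F_q^*$-coset of $z$.

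The gap is that the counting identity---which is the entire content of the proposition---is never proved. Your plan for the nonvanishing part of the split requires the joint distribution of the pair $(\Tr_{q^3/q}(bx\mu),\,\Tr_{q^3/q}(a\mu^{-1}))$ as $\mu$ runs over $C_0$, weighted by the non-uniform root-count functions $\alpha\mapsto|\{\lambda\in\F_q:\N_{q^3/q}(\lambda+x^q-x^{q^2})=\alpha\}|$ and summed over $x\in L_0$. That joint distribution is governed by exactly the incomplete Kloosterman sums that the paper notes cannot be evaluated individually; nothing in your ``reorganization'' shows why the two non-uniformities conspire to leave precisely $q^2\mu_{z_0}+q(q+1)^2$, why the surviving element is $z_0=w^{Nu_0+(q-1)t_0}$ (assembled from the $\F_q^*$-part of $ab$ and the $C_0$-part of $ab^{-1}$) rather than some other combination, or how the case $w^{(q-1)t_0}\notin T_0$ comes out to exactly $q(q+1)^2$. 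You concede this yourself by calling the identification ``the main obstacle'' and pointing to ``the exponential-sum computations deferred to the Appendix''---but those computations \emph{are} the proof of this proposition, so what you have is a correct (and interesting) reduction followed by an unproved claim equivalent to the statement being proved.
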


The proof of Proposition~\ref{odd1.2} involves very complicated computations of
exponential sums. To streamline the presentation of the paper, we delay the proof to the Appendix.

\begin{proof}[Proof of Theorem~\ref{mtheven}]
 As mentioned in Subsection \ref{subsec_M}, it suffices to establish \eqref{toprove}
 for nonzero element $(a,b)$ of $V$, i.e.,
 \begin{equation}
  \psi_{a,b}(D)=\begin{cases}
   -\frac{(q+1)^2}{3}+q^3, & \text{if~}(a,b)\in D, \\
   -\frac{(q+1)^2}{3},     & \text{otherwise}.\end{cases}
 \end{equation}

 If either $a=0$ or $b=0$, then $(a,b)\not\in D$ and the claim has been established in  Lemma~\ref{ab0}.

 We next treat the case when $ab\neq 0$. Write $ab=w^{Ns_0+(q-1)t_0}$, $ab^{-1}=w^{Nu_0+(q-1)v_0}$, and set $x_0=w^{(q-1)t_0}$, $\theta_0=w^{Nu_0}$. It follows that $a^2=w^{N(u_0+s_0)+(q-1)(t_0+v_0)}$. In the rest of this proof, we use the notation introduced in the statement of  Proposition~\ref{odd1.2}. In particular, $z_0=x_0\theta_0$ and $z_1=x_0\theta_0^{-1}$.

 We claim that $(a,b)\in D$ if and only if $z_1\in E$. We write $a=x\mu$ for some $x\in \F_q^\ast$ and $\mu\in C_0$. From $a^2=w^{N(u_0+s_0)+(q-1)(t_0+v_0)}$ we deduce that $x^2=w^{N(u_0+s_0)}$. It is straightforward to check that $ab=x^2z_1$, so $b=(x\mu)^{-1}x^2z_1=x\mu^{-1}z$. That is, $(a,b)=(x\mu,x\mu^{-1}z_1)$. The claim now follows from the definition of $D$.

 By Lemma~\ref{abne0p1} and Proposition~\ref{odd1.2}, we have
 \begin{align*}
  \psi_{a,b}(D)=\begin{cases}
   \frac{(q+1)(q^3-q^2+1)}{3(q-1)}+\frac{q^3}{3}(\mu_{z_0}+\mu_{z_1}')-\frac{2q^4}{3(q-1)}, & \text{if~}x_0\in T_0, \\
   -\frac{(q+1)^2}{3},                                                                      & \text{otherwise}.
  \end{cases}
 \end{align*}
 Hence, we  need to compute $\mu_{z_0}+\mu_{z_1}'$ under the assumption that $x_0\in T_0$. Set $\tilde{z}_{0}:=x_0^q-x_0^{q^2}$. We now have
 \begin{align*}
  \mu_{z_0}+\mu_{z_1}'= & |\{(y,\lambda)\in C_0\times\F_q : y=(z^q-z^{q^2})-\lambda\}|+|\{(y,\lambda)\in C_0\times\F_q : y=\beta (z^q-z^{q^2})-\lambda\}|                      \\
  =                     & |\{\lambda\in\F_q: \N_{q^3/q}(\lambda+\theta_0\tilde{z}_0)=1\}|+|\{\lambda\in\F_q: \N_{q^3/q}(\lambda+\theta_0^{-1}\beta\tilde{z}_0)=1\}|            \\
  =                     & |\{\lambda\in\F_q: \N_{q^3/q}(\lambda+\tilde{z}_0)=\theta_0^{-3}\}|+|\{\lambda\in\F_q: \beta^3\N_{q^3/q}(\lambda+\tilde{z}_0)^{-1}=\theta_0^{-3}\}|,
 \end{align*}
 which is the multiplicity of $\theta_0^{-3}$ in the multiset $W_{x_0}$. By Proposition~\ref{keyodd}, this multiplicity is equal to $1$ or $4$, and correspondingly $ \psi_{a,b}(D)=-\frac{(q+1)^2}{3}$ or $q^3-\frac{(q+1)^2}{3}$. Moreover $\mu_{z_0}+\mu_{z_1}'=4$ if and only if $\theta_0^{-1}\in L_{x_0}$, i.e., $z_1=x_0\theta_0^{-1}\in E$. To sum up, we have shown that
 \begin{align*}
  \psi_{a,b}(D)=\begin{cases}
   q^3-\frac{(q+1)^2}{3}, & \text{if~} z_1\in E, \\
   -\frac{(q+1)^2}{3},    & \text{otherwise}.
  \end{cases}
 \end{align*}
 Also we have shown that $ (a,b)\in D$ if and only if $z_1\in E$. The proof is now complete.
\end{proof}

\section{The stabilizer of ${\mathcal M}$ in $\textup{P}\Omega(V)$}

Recall that $V=\F_{q^3}\times \F_{q^3}$, and $Q: V\to\F_q$ defined by $Q((x,y))=\Tr_{q^3/q}(xy),\ \forall (x,y)\in V$, is a nonsingular hyperbolic quadratic form on $V$. Let $\Omega(V)$ be the derived subgroup of the isometry group of the quadratic space $(V,\,Q)$, and let $\textup{P}\Omega(V)$ be the quotient group modulo its center. In this section, we determine the stabilizer of $\M$ in $\textup{P}\Omega(V)$.   

Set $W:=\{x\in\F_{q^3}:\,\Tr_{q^3/q}(x)=0\}$, $U_1:=\F_{q^3}\times\{0\}$, and $U_2:=\{0\}\times\F_{q^3}$. Let $E$ be the subset of $\F_{q^3}^*$ as in Proposition \ref{keyodd}. For each $x\in L_0$, there exists a subset $L_x$ of $\F_q^*$ of size $\frac{q+1}{3}$ such that $E=\cup_{x\in L_0}xL_x$ by the proof of Proposition \ref{keyodd}.

Let $\square$ be the set of squares of $\F_q^*$ (so in the case when $q$ is even, $\square=\F_q^*$). For two subsets $A,\,B$ of $\F_{q^3}^*$, define $A\cdot B:=\{ab:\,a\in A,\,b\in B\}$. In particular, if $A=\{a\}$, we write $aB$ for $A\cdot B$.

\begin{lemma}\label{lem_W}
 As $\F_q$-vector spaces, $\F_{q^3}=W\oplus\F_q$. Furthermore $\F_q^*\cdot E=W\setminus\{0\}$.
\end{lemma}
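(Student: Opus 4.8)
\textbf{Proof plan for Lemma~\ref{lem_W}.}

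The first assertion is essentially immediate: the absolute trace $\Tr_{q^3/q}\colon\F_{q^3}\to\F_q$ is a surjective $\F_q$-linear map, so its kernel $W$ has dimension $2$ over $\F_q$. Since $q\equiv 2\pmod 3$, the prime $p$ is not $3$, hence $\Tr_{q^3/q}(1)=3\ne 0$ in $\F_q$, so $\F_q\cap W=\{0\}$ and a dimension count gives $\F_{q^3}=W\oplus\F_q$ as $\F_q$-vector spaces. This should take one or two lines.

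For the second assertion, recall from Proposition~\ref{keyodd} and its proof that $E=\bigcup_{x\in L_0}xL_x$ with each $L_x\subseteq\F_q^*$ of size $(q+1)/3$, and that $L_0=\{x\in T_0:\N_{q^3/q}(x)=1\}$ has size $q+1$ with $L_0\cdot\F_q^*=T_0=W\setminus\{0\}$. First I would show the inclusion $\F_q^*\cdot E\subseteq W\setminus\{0\}$: each generator of $E$ has the form $xy$ with $x\in L_0\subseteq W$ and $y\in\F_q^*$, so $xy\in\F_q^*\cdot x\subseteq\F_q^*\cdot L_0\subseteq W$, and it is nonzero; multiplying further by $\F_q^*$ keeps us inside $W\setminus\{0\}$ since $W$ is an $\F_q$-subspace. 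For the reverse inclusion I would argue by a counting/covering argument organized by the $\F_q^*$-cosets inside $W\setminus\{0\}$: the set $W\setminus\{0\}=T_0$ is a union of $q+1$ cosets of $\F_q^*$ in $\F_{q^3}^*$, and the map sending a coset to its unique representative in $L_0$ is a bijection between these cosets and $L_0$. For a fixed $x\in L_0$, the coset $x\F_q^*$ is contained in $\F_q^*\cdot E$ precisely when $x(xL_x)\cdot\F_q^*$ hits that coset, which it does because $L_x$ is a nonempty subset of $\F_q^*$, so $xL_x\subseteq x\F_q^*$ is nonempty and hence $\F_q^*\cdot(xL_x)=x\F_q^*$. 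Ranging over all $x\in L_0$, every coset of $\F_q^*$ inside $T_0=W\setminus\{0\}$ is covered, giving $W\setminus\{0\}\subseteq\F_q^*\cdot E$.

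The only point requiring a little care — and what I would flag as the main (mild) obstacle — is the bookkeeping identifying $\F_q^*\cdot E$ coset-by-coset: one must be sure that the ``$x$-part'' $x\in L_0$ of an element $xy\in E$ is recoverable, i.e.\ that distinct $x\in L_0$ lie in distinct $\F_q^*$-cosets. This is exactly the statement $L_0\cdot\F_q^*=T_0$ with $|L_0|=q+1=|T_0|/|\F_q^*|$ recorded at the start of Subsection~\ref{sec:const1}, which forces $L_0$ to be a transversal of $\F_q^*$ in $T_0$. Once this is in hand, the equality $\F_q^*\cdot E=W\setminus\{0\}$ follows purely formally from $L_0$ being such a transversal together with each $L_x$ being nonempty, with no further computation. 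I do not expect the multiset/multiplicity subtleties of $E$ to intrude here, since for this lemma only the underlying \emph{set} $E$ matters.
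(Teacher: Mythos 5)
Your proof is correct and follows the same route as the paper: the first claim is the observation that $\Tr_{q^3/q}$ restricted to $\F_q$ is multiplication by $3\ne 0$ (so $W\cap\F_q=\{0\}$) plus a dimension count, and the second is the formal computation $\F_q^*\cdot E=\bigcup_{x\in L_0}\F_q^*\cdot(xL_x)=\bigcup_{x\in L_0}x\F_q^*=L_0\cdot\F_q^*=T_0=W\setminus\{0\}$, using only that each $L_x$ is a nonempty subset of $\F_q^*$. The paper states this in two lines; you have simply written out the details.
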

\begin{proof}
 Since $\gcd(3,q-1)=1$, $W$ and $\F_q$ intersect trivially, and  the first claim follows. The second is clear from the above description of $E$.
\end{proof}

For each $u\in L_0$, we define $B_u:=\{y^{q^2}u^q-y^qu^{q^2}:\,y\in L_0\setminus\{u\}\}$. It is routine to check that the elements of $B_u$ lie in $\F_q^*$ by the fact $\Tr_{q^3/q}(y)=0$ for $y\in L_0$.
\begin{lemma}\label{lem_Bu}
 For $u\in L_0$, we have $|B_u|=\frac{2}{3}(q+1)-1$ and $B_u\cdot L_u=\F_q^*$.
\end{lemma}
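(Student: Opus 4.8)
The plan is to first pin down the size of $B_u$, then prove the product identity $B_u\cdot L_u=\F_q^*$ by a counting argument resting on the multiplicity computation in Lemma~\ref{calpha} / Proposition~\ref{keyodd}. For the cardinality: the map $y\mapsto y^{q^2}u^q-y^qu^{q^2}$ is well-defined on $L_0\setminus\{u\}$, and I would check it is injective there. Suppose $y_1,y_2\in L_0$ give the same value; then $u^q(y_1^{q^2}-y_2^{q^2})=u^{q^2}(y_1^q-y_2^q)$, i.e. $(y_1-y_2)^{q}\bigl(u^q(y_1+y_2)^{q^2}\cdots\bigr)$ — more cleanly, raising to suitable powers and using $\N_{q^3/q}(u)=\N_{q^3/q}(y_i)=1$ together with $\Tr_{q^3/q}(\cdot)=0$, one gets $y_1^{q^2}/y_1^{q}=u^{q^2}/u^q$ type relations forcing $y_1=y_2$ or $y_1=u$. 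Also I must verify no element of $L_0\setminus\{u\}$ maps to $0$: $y^{q^2}u^q=y^qu^{q^2}$ would give $(y/u)^{q^2-q}=1$, hence $y/u\in\F_q$, but both have norm $1$ and trace $0$, so $y/u=1$. Thus $|B_u|=|L_0|-1=q$. Wait — the statement claims $|B_u|=\frac{2}{3}(q+1)-1$, which equals $q$ only when $q=3$; so in fact $B_u$ is a \emph{multiset} or the claim is about distinct values with repetitions collapsing. Re-reading: the intended reading must be that $B_u$ is the set of \emph{distinct} values, and the count $\tfrac23(q+1)-1$ comes from pairing $y\leftrightarrow$ (some involution) so that generically two choices of $y$ give the same $B_u$-element while the $\tfrac13(q+1)$ "special" pairs coincide; I would identify this involution (presumably $y\mapsto$ the third root, linking to the cubic structure $g(X)=X^3+eX-1$ with roots $x,x^q,x^{q^2}$) and carry out the resulting $3$-to-$1$ or $2$-to-$1$ analysis to land on $\tfrac23(q+1)-1$.

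For the identity $B_u\cdot L_u=\F_q^*$: the natural route is dimension/counting. We have $|B_u|=\tfrac23(q+1)-1$ and $|L_u|=\tfrac13(q+1)$, so $|B_u|\cdot|L_u|=\bigl(\tfrac23(q+1)-1\bigr)\cdot\tfrac13(q+1)$, which is comfortably larger than $q-1=|\F_q^*|$ for $q\geq 5$; hence a pure cardinality bound cannot suffice and we need structure. The key idea is to connect $B_u$ and $L_u$ back to the multiset $W_u=[\N_{q^3/q}(\lambda+z):\lambda\in\F_q]\cup[\gamma\N_{q^3/q}(\lambda+z)^{-1}:\lambda\in\F_q]$ from \eqref{eqn_Wx} with $z=u^q-u^{q^2}$, recalling $W_u=\F_q^*+3L_u^{(3)}$. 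I would expand $\N_{q^3/q}(\lambda+z)$ as a cubic in $\lambda$ whose "linear-in-$\lambda$" shifts produce exactly the elements $y^{q^2}u^q-y^qu^{q^2}$ (this is the same mechanism by which, in the proof of Theorem~\ref{mtheven}, the quantity $\mu_{z_0}+\mu_{z_1}'$ became a multiplicity in $W_{x_0}$). Concretely, writing the minimal polynomial of $u$ as $X^3+eX-1$ and tracking how $\N_{q^3/q}(\lambda+z)$ and its reciprocal-twist sweep through $\F_q^*$ as $\lambda$ varies, each target value $\alpha\in\F_q^*$ is hit with total multiplicity $c_\alpha\in\{1,4\}$; the value $\alpha$ lies in $B_u\cdot L_u$ precisely when a corresponding shift parameter lands in $L_u$, and Lemma~\ref{calpha} guarantees enough hits to cover every $\alpha\in\F_q^*$.

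The cleanest packaging: define $\phi:\F_q^*\to\F_q^*$ tracking, for each nonzero $\alpha$, whether $\alpha/\ell^3\in (B_u)^{\text{(something)}}$ for some $\ell\in L_u$; then show the fibers are governed by $c_\alpha$. Alternatively — and I suspect this is the actual intended proof — observe that $\F_q^*\cdot E=W\setminus\{0\}$ from Lemma~\ref{lem_W} and that $E=\bigcup_{x\in L_0}xL_x$; intersecting $W\setminus\{0\}$ with the coset-structure induced by $u$ and using $\N_{q^3/q}$-fibers reduces $B_u\cdot L_u=\F_q^*$ to the surjectivity already implicit in Proposition~\ref{keyodd}. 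The main obstacle, I expect, is the cardinality claim $|B_u|=\tfrac23(q+1)-1$: getting the exact collapse count requires correctly identifying the involution on $L_0\setminus\{u\}$ (pairs $\{y,y'\}$ with $y^{q^2}u^q-y^qu^{q^2}=(y')^{q^2}u^q-(y')^qu^{q^2}$) and showing it has exactly $\tfrac13(q+1)$ fixed points; the surjectivity $B_u\cdot L_u=\F_q^*$ should then follow more routinely by feeding these counts into the $c_\alpha\in\{1,4\}$ dichotomy, since the "$4$" cases force coincidences that exactly account for the deficit between $|B_u|\cdot|L_u|$ and $q-1$.
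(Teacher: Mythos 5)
There are two genuine gaps here, and in both cases you were circling the right objects but missed the mechanism. For the cardinality: your opening injectivity argument is wrong --- the map $y\mapsto y^{q^2}u^q-y^qu^{q^2}$ is \emph{not} injective on $L_0\setminus\{u\}$, which you eventually notice, but the resolution is not an involution with $\tfrac13(q+1)$ fixed points. The paper parametrizes $L_0\setminus\{u\}=\bigl\{(u^q+\lambda u)/\N_{q^3/q}(u^q+\lambda u)^{1/3}:\lambda\in\F_q\bigr\}$ and observes that for such $y$ the numerator of $y^{q^2}u^q-y^qu^{q^2}$ collapses to the constant $u^{q+1}-u^{2q^2}$, so $|B_u|$ equals the size of the value set of $\lambda\mapsto\N_{q^3/q}(u^{q-1}+\lambda)$. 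After the shift $x=\lambda+\tfrac13\Tr_{q^3/q}(u^{q-1})$ this is $x^3+bx+c$ with $b\neq 0$, and $X^3+bX$ is a Dickson polynomial whose value set has size $\frac{q-1}{2\gcd(3,q-1)}+\frac{q+1}{2\gcd(3,q+1)}=\frac{2}{3}(q+1)-1$ by the cited theorem of Chou--Gomez-Calderon--Mullen. Your sketch never identifies this, and the fibre structure of a Dickson polynomial is not a single involution, so the ``$2$-to-$1$ with special pairs'' picture you propose would not close the count.

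For the identity $B_u\cdot L_u=\F_q^*$, you dismiss the cardinality route by comparing the \emph{product} $|B_u|\cdot|L_u|$ with $q-1$, but the correct comparison is the \emph{sum}: for any $a\in\F_q^*$, both $aL_u^{-1}$ and $B_u$ are subsets of $\F_q^*$, and
\[
|aL_u^{-1}|+|B_u|=\tfrac{q+1}{3}+\tfrac{2}{3}(q+1)-1=q>q-1=|\F_q^*|,
\]
so they intersect and $a\in B_u\cdot L_u$. This one-line pigeonhole is the paper's entire argument; it depends only on the two cardinalities, so your proposed detour through $W_u$, the multiplicities $c_\alpha$, and Proposition~\ref{keyodd} is both uncarried-out and unnecessary. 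As written, neither half of your proposal constitutes a proof.
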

\begin{proof}
 We have shown that $W$ and $\F_q$ intersect trivially, so $u\not\in\F_q$ and $W=\< u,\,u^q\>_{\F_q}$. Moreover, $u^{q-1}\not\in\F_q^*$ by the fact that $\gcd(q-1,q^2+q+1)=1$. We thus have $L_0=\left\{\frac{u^q+\lambda u}{\N_{q^3/q}(u^q+\lambda u)^{1/3}}:\,\lambda\in\F_q\right\}\cup\{u\}$. It follows that $B_u=\left\{\frac{u^{q+1}-u^{2q^2}}{\N_{q^3/q}(u^q+\lambda u)^{1/3}}:\,\lambda\in\F_q\right\}$, and its size equals that of $\{\N_{q^3/q}(u^{q-1}+\lambda):\,\lambda\in\F_q\}$. Write $x:=\lambda+\frac{1}{3}\Tr_{q^3/q}(u^{q-1})\in\F_q$. Then
 \[
  \N_{q^3/q}(u^{q-1}+\lambda)=x^3+bx+c
 \]
 for some $b,c\in\F_q$. The polynomial $X^3+bX+c\in \F_q[X]$ has no roots in $\F_q$, so it is irreducible over $\F_q$. It follows that $bc\ne 0$. The polynomial $X^3+bX$ is a Dickson polynomial of degree $3$, and its value set over $\F_q$ has size $\frac{q-1}{2\cdot\gcd(3,q-1)}+\frac{q+1}{2\cdot\gcd(3,q+1)}=\frac{2}{3}(q+1)-1$ by Theorem 10 and Theorem 10' of \cite{valueset}. For any $a\in\F_q^*$, we have
 \[
  |\{ay^{-1}:\,y\in L_u\}\cap B_u|\ge |L_u|+|B_u|-|\F_q^*|\ge 1.
 \]
 This show that $a\in B_u\cdot L_u$. Hence $B_u\cdot L_u=\F_q^*$. The proof of the lemma is now  complete.
\end{proof}

The generators of $(V,\,Q)$ fall into two equivalence classes; two generators $U$ and $U'$ are equivalent if and only if $U\cap U'$ has dimension $1$, cf. \cite[Theorem 1.39]{GGG}. The group $\Omega(V)$  stabilizes each equivalence class, cf. \cite[p. 30]{KL129}. The two subspaces $U_1$ and $U_2$ are both generators of the quadratic space $(V,\,Q)$, and they are in different equivalence classes.
\begin{lemma}\label{lem_stabgen}
 The only generators of $(V, Q)$ that are disjoint from $\M$ are $U_1$ and $U_2$.
\end{lemma}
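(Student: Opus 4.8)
My plan is to prove the nontrivial inclusion by classifying all generators of $(V,Q)$ according to the dimensions $\dim(U\cap U_1)$ and $\dim(U\cap U_2)$, and then checking that every generator other than $U_1$ and $U_2$ contains a point of $\M$. For the easy direction, note that as point sets $U_1=O_{(1,0)}$ and $U_2=O_{(0,1)}$ (these are $i(C_0)$-orbits of size $q^2+q+1$ contained in the projective planes $U_1,U_2$); since $E\subseteq\F_{q^3}^{\ast}$, every point of $\M=\bigcup_{z\in E}O_{(1,z)}$ has a representative with both coordinates nonzero, hence lies in neither $U_1$ nor $U_2$.

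For the converse, let $U$ be a generator disjoint from $\M$. Recalling that two distinct generators of $(V,Q)$ are equivalent iff they meet in dimension $1$, and that $U_1,U_2$ are inequivalent, exactly one of $\dim(U\cap U_1)$, $\dim(U\cap U_2)$ lies in $\{1,3\}$ and the other in $\{0,2\}$. Running through the possibilities, $U$ must be one of: $U_1$; $U_2$; $U_{1,c}:=\langle(c,0)\rangle\oplus(\{0\}\times c^{-1}W)$ for some $c\in\F_{q^3}^{\ast}$ (the case $\dim(U\cap U_1)=1$, $\dim(U\cap U_2)=2$, where $U=(U\cap U_1)\oplus(U\cap U_2)$ and total singularity forces the second summand to be $c^{-1}W=\{s:\Tr_{q^3/q}(cs)=0\}$); the mirror object $U_{2,d}$; the graph $\{(x,\,ax^q-a^{q^2}x^{q^2}):x\in\F_{q^3}\}$ for some $a\in\F_{q^3}^{\ast}$ (the case $\dim(U\cap U_1)=1$, $\dim(U\cap U_2)=0$: then $U$ is the graph of an $\F_q$-linear $\phi$, and $\Tr_{q^3/q}(x\phi(x))\equiv 0$ forces this shape after comparing the coefficients of the pairwise distinct monomials occurring, with $\dim_{\F_q}\ker\phi=1$ then automatic); or the mirror graph over $U_2$. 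So it remains to rule out these four shapes.

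For $U_{1,c}$ I would show it meets \emph{every} $i(C_0)$-orbit: apart from $\langle(c,0)\rangle\in O_{(1,0)}$ and the $q+1$ points of $U_2$ it contains, its remaining $q^2-1$ points normalize to $\langle(c_0,t)\rangle$ with $c_0$ the $C_0$-component of $c$ and $t\in c^{-1}W\setminus\{0\}$, and a short computation shows these lie in $O_{(1,z)}$ with $z$ running over all of $T_0$. Hence $U_{1,c}$ meets $\M$. Since the isometry $(x,y)\mapsto(y,x)$ sends $U_{1,c}$ to $U_{2,c}$ and merely permutes the $i(C_0)$-orbits among themselves, the same conclusion holds for every $U_{2,d}$.

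The graph case is where the real work lies, and I expect it to be the main obstacle. Since $i(C_0)$ stabilizes $\M$ and acts on the graphs over $U_1$ by $a\mapsto a\mu^{-(1+q)}$ with $\gcd(1+q,q^2+q+1)=1$, we may assume $a\in\F_q^{\ast}$, so $\phi(x)=a(x^q-x^{q^2})$; then the point $\langle(x_0,\phi(x_0))\rangle$ of $U$ lies in $O_{(1,\,x_0\phi(x_0))}$, so $U$ meets $O_{(1,z)}$ for every $z\in\{a\,x_0(x_0^q-x_0^{q^2}):x_0\in C_0\}$. Fix any $u\in L_0$. By Lemma~\ref{lem_Bu}, $B_u\subseteq\F_q^{\ast}$ has size $\tfrac23(q+1)-1$, while $|L_u|=\tfrac{q+1}{3}$; hence $|B_u|+|a^{-1}L_u|=q>q-1=|\F_q^{\ast}|$, so there is $b\in B_u\cap a^{-1}L_u$, say $b=(u')^{q^2}u^q-(u')^qu^{q^2}$ with $u'\in L_0\setminus\{u\}$. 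Taking $x_0:=u/u'\in C_0$ and using $u,u'\in C_0\cap T_0$, a direct calculation gives $a\,x_0(x_0^q-x_0^{q^2})=u(ab)$, and since $ab\in L_u$ this lies in $uL_u\subseteq E$; thus $U$ meets $O_{(1,u(ab))}\subseteq\M$, contradicting the choice of $U$. The graph-over-$U_2$ case runs identically, with $L_u^{-1}$ in place of $L_u$. This exhausts the list and forces $U\in\{U_1,U_2\}$. The crux of the argument is exactly this last step: one needs the set of orbits met by a graph generator to be unavoidably large, and that is precisely what the covering relation $B_u\cdot L_u=\F_q^{\ast}$ of Lemma~\ref{lem_Bu} provides.
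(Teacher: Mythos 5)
Your proof is correct, and it reaches the same two pillars as the paper's argument --- the $i(C_0)$-normalization and the pigeonhole count $|B_u|+|L_u|>q-1$ from Lemma~\ref{lem_Bu} for the hard case --- but organizes the case analysis differently. The paper fixes the point $P=U\cap U_2$ (resp.\ $U\cap U_1$), moves it to $\langle(0,1)\rangle$ by $i(C_0)$, and enumerates the $2(q+1)$ totally singular lines of the residual quadric $P^\perp/P\cong \Q^+(3,q)$; you instead classify generators globally by the pair $\left(\dim(U\cap U_1),\dim(U\cap U_2)\right)$ and write each type down explicitly in $V$, either as $\langle(c,0)\rangle\oplus(\{0\}\times c^{-1}W)$ or as the graph of $x\mapsto ax^q-a^{q^2}x^{q^2}$, normalizing the graph parameter to $\F_q^*$ via the action $a\mapsto a\mu^{-(1+q)}$ rather than normalizing the point $P$. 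The two enumerations match up (the paper's $\ell_y$ lift to your $U_{1,c}$, its $\ell_a'$ to your graphs), and your identity $ax_0(x_0^q-x_0^{q^2})=u(ab)$ for $x_0=u/u'$ is the exact counterpart of the paper's relation $-3a=(y^{q^2}u^q-y^qu^{q^2})c$. One genuine gain of your route: for the type $U_{1,c}$ you show the generator meets \emph{every} orbit $O_{(1,z)}$, $z\in T_0$, so only $E\subseteq T_0$ and $E\ne\emptyset$ are needed, whereas the paper's treatment of the $\ell_y$ case invokes Lemma~\ref{lem_W} ($\F_q^*\cdot E=W\setminus\{0\}$) together with the computation of $\tau_y(W)$. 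The cost is that your classification of the graph type requires the coefficient comparison $c_0=0$, $c_2=-c_1^{q^2}$ (including the mild check that the relevant monomials stay distinct modulo $X^{q^3}-X$, e.g.\ for $q=2$), which the residual-quadric viewpoint gets for free from the known list of lines of $\Q^+(3,q)$. Your handling of the mirror cases via the isometry $(x,y)\mapsto(y,x)$, which replaces $L_u$ by $L_u^{-1}$ without changing its size, is sound.
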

\begin{proof}
 It is clear that $U_1$ and $U_2$ are disjoint from $\M$. Suppose that $U$ is a generator other than $U_1,\,U_2$. We will show that $U$ intersects $\M$ nontrivially.

 We first consider the case where $U$ and $U_2$ are equivalent. In this case, $U\cap U_2$ is a projective point $P$. By applying the action of some element in $i(C_0)\leq PGO^+(6,q)$ if necessary, we may assume without loss of generality that $P=\< (0,1)\>$. It is clear that $P^\perp=W\times \F_{q^3}$. Since $\F_{q^3}=W\oplus\F_q$, we identify $W\times W$ with $P^\perp/P$ naturally. In this way, $W\times W$ becomes a quadratic space $\Q^+(3,q)$ whose inherited quadratic form is the same as the restriction of $Q$ to $W\times W$.
 We have $\M\cap P^\perp=\{\<(y,\,y^{-1}x)\>:\,x\in E,\,y\in L_0\}$, and the corresponding set in $W\times W$ is $\M_P:=\{\<(y,\,\tau_y(x))\>:\,x\in E,\,y\in L_0\}$, where $\tau_y(x):=y^{-1}x-\frac{1}{3}\Tr_{q^3/q}(y^{-1}x)$. It is straightforward to check that
 \[
  \ker(\tau_y)=\F_q\cdot y,\;\textup{Im}(\tau_y)\le W,\;\tau_y(W)\le\{z\in W:\, \Tr_{q^3/q}(yz)=0\}=\< y^q-y^{q^2}\>_{\F_q}.
 \]
 We thus have $\tau_y(W)= \< y^q-y^{q^2}\>_{\F_q}$ by comparing dimensions. Let $U'$ be the totally singular line of $W\times W$ corresponding to $U$. To show that $U$ intersects $\M$ nontrivially, it suffices to show that $U'$ intersects $\M_P$ nontrivially. There are $2(q+1)$ totally singular lines of $\Q^+(3,q)$, these are  $\ell_y=\< (y,0),\,(0,y^q-y^{q^2})\>$ with $y\in L_0$, $\ell_a'=\{\< (x,ax^q-ax^{q^2})\>:\,x\in W\}$ with $a\in\F_q$ and $\ell_\infty'=\{0\}\times W$. The last line $\ell_\infty'$ corresponds to the generator $U_2$, so $U'\ne\ell_\infty'$.
 
 \begin{enumerate}
  \item[(1)] If  $U'=\ell_y$ for some $y\in L_0$,  then the point $\<(y,\,\tau_y(x))\>$ with $x\in E$ is in $U'$ if there exists $\lambda\in\F_q^*$ such that $\tau_y(\lambda x)=y^q-y^{q^2}$. By Lemma \ref{lem_W}, we have $\F_q^*\cdot E=W\setminus\{0\}$. The existence of such $x\in E$ and $\lambda\in\F_q^*$ now follows from the fact that $\tau_y(W)= \< y^q-y^{q^2}\>_{\F_q}$.
  \item[(2)] If $U'=\ell_a'$ for some fixed $a\in\F_q$, then  $U'\cap\M_P\ne\emptyset$ if there exists $y\in L_0$, $u\in L_0$ and $c\in L_u$ such that
        $ \tau_y(uc)=ay^q-ay^{q^2}$.  The left hand side equals $z^q-z^{q^2}$ with $z=-\frac{1}{3}(y^{-1}u)^qc+\frac{1}{3}(y^{-1}u)^{q^2}c$, so  $ay-z\in\F_q$. By taking the relative trace, we see that it equals $0$. By the fact that $\N_{q^3/q}(y)=1$ for $y\in L_0$, we deduce that $-3a=(y^{q^2}u^q-y^qu^{q^2})c$. When $a=0$, we can simply take $y=u\in L_0$ and $c\in L_u$ arbitrarily. When $a\ne 0$, we take $u$ to be any element of $L_0$ and the existence of the desired $(y,c)$ pair follows from Lemma \ref{lem_Bu}.
 \end{enumerate}
 In both cases, we have shown that $U'$ intersects $\M_P$ nontrivially. This establishes the claim in the case $U$ is in the same equivalence class as $U_2$.

 We next consider the case where $U$ and $U_1$ are equivalent. Observe that $O_{(1,xa)}=O_{(xa^{-1},1)}$ for $x\in C_0$ and $a\in \F_q^*$, so $\M=\cup_{x\in E'}O_{(x,1)}$, where $E'=\cup_{x\in L_0}xL_x'$ with $L_x'=\{a^{-1}:\,a\in L_x\}$. The argument is exactly the same as in the previous case.
\end{proof}

Let $K$ be the stabilizer of $U_1$ and $U_2$ in $\Omega(V)$, i.e., $K=\{\alpha\in \Omega(V) : \alpha(U_1)=U_1\;{\rm and}\; \alpha(U_2)=U_2\}$. By \cite[Lemma 4.1.9]{KL129}, $K$  consists of
\[
 \kappa(h,h^*):\,V\rightarrow V,\;(x,y)\mapsto (h(x),h^*(y)),
\]
where both $h$ and $h^*$ are bijective $\F_q$-linear transformations of $\F_{q^3}$ such that $\det(h),\det(h^*)\in\square$ and $Q((x,y))=Q((h(x),h^*(y)))$ for all $x,\,y\in\F_{q^3}$. Here, $\det(h)$ is the determinant of $h$ with respect to any $\F_q$-basis of $\F_{q^3}$. For each bijective $\F_q$-linear transformation $h$ of $\F_{q^3}$ with $\det(h)\in\square$, there is a unique $h^*$ such that $\kappa(h,h^*)\in K$, and vice versa.

We now describe some special elements of $K$. For $a\in\F_{q^3}^*$, define
\[
 h_a:\,\F_{q^3}\rightarrow\F_{q^3},\quad x\mapsto ax,
\]
and set $\kappa_a:=\kappa(h_a,h_{a^{-1}})$. An element $z\in C_0$ (which we identify with the corresponding element in $i(C_0))$ acts on $V$ in exactly the same way as $\kappa_z$.

\begin{lemma}\label{lem_ha}
 For $a\in\F_{q^3}^*$, $\kappa_a$ is in $K$ if and only if $a$ is a square in $\F_{q^3}^*$.
\end{lemma}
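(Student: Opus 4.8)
The plan is to strip off the two defining conditions of $K$ one at a time. First I would observe that the quadratic-form condition is automatically satisfied by $\kappa_a=\kappa(h_a,h_{a^{-1}})$: for all $x,y\in\F_{q^3}$,
\[
 Q\big((h_a(x),\,h_{a^{-1}}(y))\big)=\Tr_{q^3/q}\big((ax)(a^{-1}y)\big)=\Tr_{q^3/q}(xy)=Q\big((x,y)\big).
\]
Hence, by the description of $K$ recalled above (coming from \cite[Lemma 4.1.9]{KL129}), $\kappa_a\in K$ if and only if both $\det(h_a)$ and $\det(h_{a^{-1}})$ lie in $\square$. Since $h_{a^{-1}}=h_a^{-1}$ we have $\det(h_{a^{-1}})=\det(h_a)^{-1}$, and a nonzero element of $\F_q$ is a square exactly when its inverse is; so the two conditions coincide, and $\kappa_a\in K$ if and only if $\det(h_a)\in\square$.

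Next I would identify $\det(h_a)$ with a norm. The operator $h_a$ is multiplication by $a$ on $\F_{q^3}$ regarded as a $3$-dimensional $\F_q$-vector space; extending scalars to $\overline{\F_q}$ diagonalizes it with eigenvalues the Galois conjugates $a,a^q,a^{q^2}$, so
\[
 \det(h_a)=a\cdot a^q\cdot a^{q^2}=\N_{q^3/q}(a).
\]
Therefore $\kappa_a\in K$ if and only if $\N_{q^3/q}(a)$ is a square in $\F_q^*$.

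It then remains to check that $\N_{q^3/q}(a)\in\square$ if and only if $a$ is a square in $\F_{q^3}^*$. When $q$ is even this is vacuous, since $\square=\F_q^*$ and every element of $\F_{q^3}^*$ is a square. When $q$ is odd, $(q-1)/2$ is an integer, and in $\F_{q^3}$ we have
\[
 \N_{q^3/q}(a)^{(q-1)/2}=\big(a^{q^2+q+1}\big)^{(q-1)/2}=a^{(q^3-1)/2};
\]
by Euler's criterion the left side equals $1$ precisely when $\N_{q^3/q}(a)$ is a square in $\F_q^*$, and the right side equals $1$ precisely when $a$ is a square in $\F_{q^3}^*$. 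This would yield the equivalence and finish the proof. (Alternatively, since $\gcd(q-1,q^2+q+1)=1$ one has the internal direct product $\F_{q^3}^*=C_0\times\F_q^*$; writing $a=ct$ with $c\in C_0$ and $t\in\F_q^*$, the oddness of $|C_0|=q^2+q+1$ makes $c$ automatically a square in $C_0$, while $\N_{q^3/q}(c)=c^{q^2+q+1}=1$ and $\N_{q^3/q}(t)=t^3$, so $\N_{q^3/q}(a)=t^3$ is a square in $\F_q^*$ iff $t$ is, iff $a$ is a square in $\F_{q^3}^*$.)

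All of the above is routine and I do not anticipate a genuine obstacle; the only points requiring slight care are the standard identification $\det(h_a)=\N_{q^3/q}(a)$ and the observation that $q^2+q+1$ is odd, so that being a square in $C_0$ imposes no condition.
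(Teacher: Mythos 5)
Your proposal is correct, and its main line of argument differs from the paper's in an instructive way. The paper dispatches the preliminary reductions with a single ``clearly'' and then never identifies $\det(h_a)$ in general: it writes $a=ct$ with $c\in C_0$ and $t\in\F_q^*$ (using $\F_{q^3}^*=C_0\cdot\F_q^*$), shows $\det(h_c)=1$ from $h_c^{q^2+q+1}=\mathrm{id}$ together with $\gcd(q^2+q+1,q-1)=1$, computes $\det(h_t)=t^3$ directly, and concludes by multiplicativity --- essentially the argument you relegate to your parenthetical alternative. Your primary route instead establishes the clean identity $\det(h_a)=\N_{q^3/q}(a)$ and then settles the equivalence in one stroke via Euler's criterion, $\N_{q^3/q}(a)^{(q-1)/2}=a^{(q^3-1)/2}$, which is arguably more conceptual and avoids the coset decomposition entirely; it also makes the even-characteristic case manifestly vacuous. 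You are additionally more careful than the paper on two small points that deserve the attention you give them: that the form-preservation condition forces $h^*=h_{a^{-1}}$ and is then automatic, and that $\det(h_{a^{-1}})=\det(h_a)^{-1}$ lies in $\square$ exactly when $\det(h_a)$ does. Both proofs are complete; yours buys a uniform formula at the cost of the (standard) eigenvalue computation, while the paper's buys elementarity at the cost of a case split.
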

\begin{proof}
 The linear transformation $\kappa_a$ clearly has determinant $1$ and stabilizes the generators $U_1$ and $U_2$, so it suffices to show that $\det(h_a)\in\square$ if and only if $a$ is a square in $\F_{q^3}^*$. For $a\in C_0$, we have $h_a^{q^2+q+1}=\textup{id}_{\F_{q^3}}$, so $\det(h_a)^{q^2+q+1}=1$. It follows that $\det(h_a)=1$ from the fact $\gcd(q^2+q+1,q-1)=1$. For $a\in\F_q^*$, we have $\det(h_a)=a^3$, which is a square if and only if $a$ is. The claim then follows readily from the fact that $\F_{q^3}^*=C_0\cdot\F_q^*$.
\end{proof}

We define $\iota:\,K\rightarrow \textup{PGL}(3,q)$ such that $\iota(g)$ is the quotient image of $g|_{U_1}$ in $\textup{PGL}(3,q)$, where $g|_{U_1}$ is the restriction of $g$ to $U_1$. Since $\gcd(3,q-1)=1$, we have $\textup{PGL}(3,q)=\textup{PSL}(3,q)$. The homomorphism $\iota$ is surjective by the above description of $K$.
\begin{lemma}\label{lem_keriota}
 We have $\ker(\iota)=\kappa_\square$, where $\kappa_\square=\{\kappa_a:\,a\in\square\}$.
\end{lemma}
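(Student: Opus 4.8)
The plan is to identify precisely those elements $\kappa(h,h^*)\in K$ whose restriction to $U_1$ is a scalar map. Under the identification of $U_1=\F_{q^3}\times\{0\}$ with $\F_{q^3}$, the restriction $g|_{U_1}$ of $g=\kappa(h,h^*)$ is exactly $h$, regarded as an element of $\textup{GL}(3,q)$ via a choice of $\F_q$-basis of $\F_{q^3}$. Hence $g\in\ker(\iota)$ if and only if $h$ lies in the centre of $\textup{GL}(3,q)$, i.e. $h=h_\lambda$ is multiplication by some scalar $\lambda\in\F_q^*$ (we use here that the scalar matrices of $\textup{GL}(3,q)$ are precisely the maps $x\mapsto\lambda x$ with $\lambda\in\F_q^*$). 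So the lemma reduces to determining for which $\lambda\in\F_q^*$ the map $h_\lambda$ extends to an element of $K$, and checking that the extension is unique and equals $\kappa_\lambda$.

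First I would prove the inclusion $\kappa_\square\subseteq\ker(\iota)$: given $a\in\square$, the element $a$ is a square of $\F_q^*$ and hence also a square of $\F_{q^3}^*$, so $\kappa_a\in K$ by Lemma~\ref{lem_ha}; since $h_a$ acts as a scalar, $\iota(\kappa_a)$ is trivial, i.e. $\kappa_a\in\ker(\iota)$. For the reverse inclusion, let $g=\kappa(h,h^*)\in\ker(\iota)$, so $h=h_\lambda$ with $\lambda\in\F_q^*$ by the previous paragraph. The defining condition of $K$, namely $Q((x,y))=Q((h(x),h^*(y)))$ for all $x,y$, reads $\Tr_{q^3/q}(xy)=\Tr_{q^3/q}\!\big(x\,(\lambda h^*(y))\big)$, i.e. $\Tr_{q^3/q}\!\big(x\,(y-\lambda h^*(y))\big)=0$ for every $x\in\F_{q^3}$. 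Since the trace bilinear form on $\F_{q^3}$ is non-degenerate, this forces $y=\lambda h^*(y)$ for all $y$, so $h^*=h_{\lambda^{-1}}$ and therefore $g=\kappa_\lambda$. Finally, membership $g\in K$ requires $\det(h_\lambda)=\lambda^3\in\square$; as $\lambda^2$ is always a square, $\lambda^3\in\square$ is equivalent to $\lambda\in\square$ (and if $q$ is even then $\square=\F_q^*$, so the condition is vacuous, as is the conclusion). Hence $g=\kappa_\lambda\in\kappa_\square$, which finishes the proof.

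I do not expect a genuine obstacle here: the argument is elementary once the description of $K$ from \cite[Lemma 4.1.9]{KL129} and Lemma~\ref{lem_ha} are in hand. The only points that require a little care are the translation between ``being scalar in $\textup{PGL}(3,q)$'' and ``being $\F_q$-scalar multiplication on $\F_{q^3}$'', the use of non-degeneracy of the trace form to pin down $h^*$ uniquely from $h$, and the elementary observation $\lambda^3\in\square\Leftrightarrow\lambda\in\square$ together with the harmless case distinction between $q$ odd and $q$ even in the determinant condition.
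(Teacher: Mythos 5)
Your proof is correct and follows essentially the same route as the paper: the paper likewise observes that $\kappa(h,h^*)\in\ker(\iota)$ forces $h=h_a$ for some $a\in\F_q^*$, hence $\kappa=\kappa_a$ by the uniqueness of $h^*$ given $h$, and then invokes Lemma~\ref{lem_ha}. Your extra details (non-degeneracy of the trace form pinning down $h^*=h_{\lambda^{-1}}$, and $\lambda^3\in\square\Leftrightarrow\lambda\in\square$) are exactly the computations implicit in the paper's two-line argument.
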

\begin{proof}
 If $\kappa=\kappa(h,h^*)\in\ker(\iota)$, then $h=h_a$ for some $a\in\F_q^*$ and correspondingly $\kappa=\kappa_a$. The claim is now an easy consequence of Lemma \ref{lem_ha}.
\end{proof}

Let $\sigma$ be the $\F_q$-linear transformation of $V$ such that $\sigma((x,y))=(x^q,y^q)$. It has order $3$ and stabilizes both $U_1$ and $U_2$.
\begin{lemma}
 We have $\sigma\in K$, and $\sigma(\M)=\M$.
\end{lemma}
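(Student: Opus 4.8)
The plan is to verify the two claims separately, exploiting the very explicit description of $\M$ in terms of $E$ and the structure of $E$ as $\bigcup_{x\in L_0}xL_x$ with $L_x\subseteq\F_q^*$. First I would check $\sigma\in K$: the map $\sigma$ is clearly $\F_q$-linear, has order $3$, and stabilizes both $U_1$ and $U_2$; writing $\sigma=\kappa(\phi,\phi)$ where $\phi:\F_{q^3}\to\F_{q^3}$ is the Frobenius $t\mapsto t^q$, I need $Q((x^q,y^q))=Q((x,y))$, which is immediate from $\Tr_{q^3/q}(x^qy^q)=\Tr_{q^3/q}((xy)^q)=\Tr_{q^3/q}(xy)$, and I need $\det(\phi)\in\square$. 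Since $\phi$ has order $3$ as an $\F_q$-linear map, $\det(\phi)^3=1$, and because $\gcd(3,q-1)=1$ this forces $\det(\phi)=1\in\square$; hence $\sigma=\kappa(\phi,\phi)\in K$. (Equivalently one can compute $\det\phi$ directly on a normal basis.)

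For the second claim $\sigma(\M)=\M$, the key observation is that $\sigma$ permutes the $i(C_0)$-orbits: for $\mu\in C_0$ one has $\sigma\circ\kappa_\mu=\kappa_{\mu^q}\circ\sigma$ and $\mu^q\in C_0$, so $\sigma(O_{(1,z)})=O_{(1^q,z^q)}=O_{(1,z^q)}$ for every $z$ with $\Tr_{q^3/q}(z)=0$. Therefore $\sigma(\M)=\bigcup_{z\in E}O_{(1,z^q)}=\bigcup_{z\in E^{(q)}}O_{(1,z)}$, where $E^{(q)}=\{z^q:z\in E\}$, and it suffices to prove $E^{(q)}=E$. Using $E=\bigcup_{x\in L_0}xL_x$ and $L_x\subseteq\F_q$ (so $L_x^{(q)}=L_x$), we get $E^{(q)}=\bigcup_{x\in L_0}x^q L_x$. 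Now $x\mapsto x^q$ permutes $L_0$ (since $\Tr_{q^3/q}$ and $\N_{q^3/q}$ are Frobenius-invariant), so I must check the matching of the subsets $L_x$: concretely, that $L_{x^q}=L_x$ for all $x\in L_0$. This should follow by applying Frobenius to the defining relation $W_x=\F_q^*+3L_x^{(3)}$ from the proof of Proposition~\ref{keyodd}: with $z=x^q-x^{q^2}$ one has $z^{(q)}=x^{q^2}-x^{q^3}=x^{q^2}-x$, and from $\N_{q^3/q}(\lambda+z)^{(q)}=\N_{q^3/q}(\lambda+z^{(q)})$ together with the fact that $z\mapsto -z$ simply reparametrizes $\lambda$ (replace $\lambda$ by $-\lambda$) and $\N_{q^3/q}((-1)(\lambda+z))=(-1)^3\N_{q^3/q}(\lambda+z)=-\N_{q^3/q}(\lambda+z)$ — one needs to track signs carefully — the multiset $W_{x^q}$ equals $W_x$, forcing $L_{x^q}=L_x$ since $L_x$ is determined by $W_x$ (the multiplicity-$4$ values are exactly $3\ell^3$ for $\ell\in L_x$). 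Hence $E^{(q)}=E$ and $\sigma(\M)=\M$.

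The main obstacle I anticipate is the bookkeeping in showing $W_{x^q}=W_x$ (equivalently $L_{x^q}=L_x$): one has to correctly relate $x^q-x^{q^2}$ to its Frobenius image, handle the inversion branch $\gamma\N_{q^3/q}(\lambda+z)^{-1}$ in the definition of $W_x$ in equation~\eqref{eqn_Wx}, and make sure the substitutions on $\lambda$ are consistent across both branches; the sign $(-1)^3=-1$ when $z\mapsto -z$ (relevant only in odd characteristic, trivial in characteristic $2$ where $z=x$) is the sort of detail that is easy to get wrong. Once that identity is in hand, everything else is formal. An alternative route that sidesteps the explicit $L_x$: argue that $\sigma$ normalizes $i(C_0)$ and centralizes the cyclic action up to the automorphism $\mu\mapsto\mu^q$, so $\sigma$ maps $\M$ to a union of $i(C_0)$-orbits that is again of the form $\bigcup_{z\in E'}O_{(1,z)}$; then invoke the fact that the Cameron-Liebler property and the explicit character computation in Section~\ref{sec:const2} pin down $E$ up to the action of $\langle\sigma\rangle$ on $T_0$ — but this is less self-contained, so I would prefer the direct verification above.
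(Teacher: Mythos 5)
Your proposal is correct and follows essentially the same route as the paper: $\sigma\in K$ via the determinant argument of Lemma~\ref{lem_ha} (here $\det(\phi)^3=1$ and $\gcd(3,q-1)=1$ force $\det(\phi)=1$), and $\sigma(\M)=\M$ by reducing to $L_{x^q}=L_x^{(q)}$, which the paper simply declares clear from the definition of $L_x$. Your worry about a sign when passing from $x$ to $x^q$ is unfounded: the element $z'=(x^q)^q-(x^q)^{q^2}=x^{q^2}-x$ is exactly $z^q$, so $\N_{q^3/q}(\lambda+z')=\N_{q^3/q}((\lambda+z)^q)=\N_{q^3/q}(\lambda+z)$ gives $W_{x^q}=W_x$ directly, with no reparametrization of $\lambda$ needed.
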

\begin{proof}
 The first claim follows by the same argument as in the proof of Lemma \ref{lem_ha}. The second claim is equivalent to $\sigma(E)=E$, or equivalently, $L_{x^q}=\{a^q:\,a\in L_x\}$ for each $x\in L_0$. This is clear from the definition of $L_x$ in the proof of Proposition \ref{keyodd}.
\end{proof}

Let $G$ be the stabilizer of $\M$ in $\Omega(V)$.  Let $\alpha\in G$. From $U_1\cap {\mathcal M}=\emptyset$ and $U_2\cap {\mathcal M}=\emptyset$, we obtain $\alpha(U_1)\cap {\mathcal M}=\emptyset$ and $\alpha(U_2)\cap {\mathcal M}=\emptyset$. By Lemma \ref{lem_stabgen} and the fact that $U_1$ and $U_2$ are in different equivalence classes, we deduce that $\alpha(U_1)=U_1$ and $\alpha(U_2)=U_2$, and so $\alpha\in K$. We have shown that $G\leq K$. Moreover, $G$ contains the subgroup $H$ generated by $\sigma$ and $i(C_0)$.
\begin{lemma}\label{lem_iotaG}
 The group $\iota(G)$ has order $3(q^2+q+1)$ when $q>2$.
\end{lemma}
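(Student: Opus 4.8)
The plan is to show that $\iota(G)$ is, on the one hand, contained in a group of order $3(q^2+q+1)$, and on the other hand contains the image of $H = \langle \sigma, i(C_0)\rangle$, which already has this order. For the containment $\iota(H)\subseteq\iota(G)$ we note that $i(C_0)$ acts on $U_1$ as multiplication by elements of $C_0$, giving a cyclic subgroup of $\mathrm{PGL}(3,q)$ of order $q^2+q+1$ (a Singer cycle), and $\sigma$ acts on $U_1$ as the Frobenius $x\mapsto x^q$, of order $3$; since Frobenius normalizes the Singer cycle and does not centralize it, $\iota(H)$ has order exactly $3(q^2+q+1)$. So the real content is the reverse inequality $|\iota(G)|\le 3(q^2+q+1)$.

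For the upper bound, I would argue as follows. Any $\alpha\in G$ lies in $K$ (already established in the text), so $\iota(\alpha)\in\mathrm{PGL}(3,q)$ is a collineation of $\mathrm{PG}(U_1)=\mathrm{PG}(2,q)$. Now translate the condition $\alpha(\M)=\M$ into a condition on $\iota(\alpha)$. Recall $\M=\bigcup_{z\in E}O_{(1,z)}$ and, from the Klein-correspondence/orbit structure, the set $\F_q^*\cdot E = W\setminus\{0\}$ (Lemma~\ref{lem_W}), i.e.\ $E$ spans, up to scalars, the hyperplane $W=\{x:\Tr_{q^3/q}(x)=0\}$ of $\F_{q^3}$ viewed as $\F_q^3$. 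The key geometric observation is that the ``shadow'' of $\M$ on $U_1$ — the set of points $\langle x\rangle$ with $x$ appearing as a first coordinate of a point of $\M$ — is exactly $\mathrm{PG}(W)$, a line of $\mathrm{PG}(2,q)$ (this uses $L_0\cdot\F_q^* = T_0 = W\setminus\{0\}$). Hence $\iota(\alpha)$ must stabilize this distinguished line $\ell_W:=\mathrm{PG}(W)$. The stabilizer of a line in $\mathrm{PGL}(3,q)$ is far too big, so one needs more: I would extract from $\alpha\in K$ the pairing between the $U_1$-action $h$ and the $U_2$-action $h^*$ forced by $Q((x,y))=Q((h(x),h^*(y)))$, namely $\Tr_{q^3/q}(xy)=\Tr_{q^3/q}(h(x)h^*(y))$, which determines $h^*$ as the inverse-transpose of $h$ with respect to the trace form. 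Then the condition $\alpha(O_{(1,z)})\subseteq\M$ becomes a precise combinatorial constraint relating $h$, $h^*$, and the sets $L_x\subset\F_q^*$ ($x\in L_0$). Pushing this through, the stabilizer of the whole configuration inside the line-stabilizer should be forced down to (the image of) the normalizer of the Singer cycle acting on $\ell_W\cong\mathrm{PG}(1,q)$ together with a bounded contribution, giving $|\iota(G)|\le 3(q^2+q+1)$.

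Concretely, I would proceed in these steps. (1) Show $\iota(H)$ has order $3(q^2+q+1)$ (Singer cycle plus Frobenius), so $\iota(G)\supseteq\iota(H)$ has at least this order. (2) Show every $\iota(\alpha)$, $\alpha\in G$, fixes the line $\ell_W$, using that the set of first coordinates of points of $\M$ is $W\setminus\{0\}$ up to scalars. (3) Using the $Q$-compatibility to write $h^*$ in terms of $h$, and using that $\M$ restricted to $P^\perp$ for $P=\langle(0,1)\rangle$ has the structure described in the proof of Lemma~\ref{lem_stabgen}, deduce that $\iota(\alpha)$ restricted to $\ell_W$ must permute a distinguished substructure — most naturally the image of $L_0$ inside $\ell_W$, or the multiset data $\{L_x\}$ — forcing $\iota(\alpha)|_{\ell_W}$ into the normalizer of the order-$(q+1)$ cyclic group in $\mathrm{PGL}(2,q)$ (of order $2(q+1)$, or $q+1$ when $q$ is even) and simultaneously pinning down the behaviour transverse to $\ell_W$. (4) Count: combine the transverse rigidity with the action on $\ell_W$ to get $|\iota(G)|\le 3(q^2+q+1)$, and conclude equality with Step~(1).

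The main obstacle I expect is Step~(3): extracting a clean, checkable constraint on $\iota(\alpha)$ from the requirement $\alpha(\M)=\M$. The set $\M$ is a union of $i(C_0)$-orbits indexed by $E=\bigcup_{x\in L_0}xL_x$, and the defining property of the $L_x$ (from Proposition~\ref{keyodd} and Lemma~\ref{calpha}) is number-theoretic — it comes from counting roots of cubics — so it is not obvious a priori that no ``exotic'' linear map preserves $E$. The way through is to exploit that $\F_q^*\cdot E=W\setminus\{0\}$ is intrinsic (independent of the fine choice of $L_x$): this already forces $\ell_W$ to be stabilized, which is what breaks the symmetry down from $\mathrm{PGL}(3,q)$, and then the residual freedom is small enough that a direct analysis — likely using Lemma~\ref{lem_Bu} and the value-set computation for the Dickson polynomial of degree $3$ — suffices to finish. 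I would also keep in mind the hypothesis $q>2$, which is presumably needed because for $q=2$ the set $L_0$ and the sets $L_x$ degenerate (sizes $q+1=3$ and $(q+1)/3=1$) and extra coincidental automorphisms appear.
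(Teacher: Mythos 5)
Your Step (2) rests on a false geometric claim, and this is where the argument breaks down. You assert that the ``shadow'' of $\M$ on $U_1$ (the set of points $\<x\>$ with $x$ occurring as a first coordinate of a point of $\M$) is the line $\mathrm{PG}(W)$. But $O_{(1,z)}=\{\<(\mu,\mu^{-1}z)\>:\mu\in C_0\}$, so the first coordinates of the points of $\M$ range over $C_0$, and since $C_0\cdot\F_q^*=\F_{q^3}^*$ these cover \emph{all} of $\mathrm{PG}(U_1)\cong\mathrm{PG}(2,q)$. The identity $\F_q^*\cdot E=W\setminus\{0\}$ from Lemma \ref{lem_W} concerns the second coordinates of the orbit representatives $(1,z)$, not a projectively invariant subset of $U_1$; it does not produce a distinguished line of $\mathrm{PG}(U_1)$ that every $\iota(\alpha)$, $\alpha\in G$, must stabilize. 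So the symmetry-breaking step from $\mathrm{PGL}(3,q)$ down to a line stabilizer never gets started, and Steps (3)--(4), which you yourself flag as the hard part, remain entirely unproved (``should be forced down to'' is a hope, not an argument).

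The paper's actual route avoids all of this by a group-theoretic shortcut: $\iota(H)$, of order $3(q^2+q+1)$, is a \emph{maximal} subgroup of $\mathrm{PSL}(3,q)$ (the normalizer of a Singer cycle; \cite[Table 8.3]{BHR407}), so the only possibilities are $\iota(G)=\iota(H)$ or $\iota(G)=\mathrm{PSL}(3,q)$. The latter is then excluded by a single test element: take $g=\kappa(h,h^*)$ with $h^*$ fixing $1$ and scaling $u,u^q$ by $\lambda,\lambda^{-1}$ for $\lambda$ primitive in $\F_q^*$; if some $\kappa_a g$ preserved $\M$ one would get $E=a^{-2}h^*(E)$, hence $uL_u=a^{-2}\lambda uL_u$ and $u^qL_{u^q}=a^{-2}\lambda^{-1}u^qL_{u^q}$, and taking products over these sets forces $\lambda^{2(q+1)/3}=1$, contradicting primitivity (with $q=5$ handled separately). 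If you want to complete a proof along your lines, you would either need to import this maximality fact, or find a genuinely invariant structure preserved by $G$ on $\mathrm{PG}(U_1)$ — the line $\mathrm{PG}(W)$ is not one.
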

\begin{proof}
 The group $\iota(H)$ has order $3(q^2+q+1)$ and is a maximal subgroup of $\textup{PSL}(3,q)$ by {\color{blue}\cite[Table 8.3]{BHR407}}. Hence either $\iota(G)=\textup{PSL}(3,q)$ or $\iota(G)=\iota(H)$.

 Suppose that $\iota(G)=\textup{PSL}(3,q)$. Fix an element $u\in L_0$, and take $\lambda$ to be a primitive element of $\F_q^*$. Let $g=\kappa(h,h^*)$ be the element of $K$ such that $h^*(1)=1$, $h^*(u)=\lambda u$ and $h^*(u^q)=\lambda^{-1}u^q$. We deduce that $h(1)=1$ from the property $Q((1,x))=Q((h(1),h^*(x)))$.  By our assumption there exists $a\in\square$ such that $\kappa_ag\in G$, i.e., $\kappa_a g$ stabilizes $\M$. The image of $\{\<(1,x)\>:\,x\in E\}\subseteq\M$ under $\kappa_a g$ is $\{\<(1,a^{-2}h^*(x))\>:\,x\in E\}$, so we have $E=a^{-2}h^*(E)$. Comparing both sides, we deduce that $uL_u=a^{-2}\lambda uL_u$, $u^qL_{u^q}=a^{-2}\lambda^{-1} u^qL_{u^q}$. Taking the product over the set on each side, we get $(a^{-2}\lambda)^{(q+1)/3}=1$, $(a^{-2}\lambda^{-1})^{(q+1)/3}=1$. It follows that $\lambda^{2(q+1)/3}=1$. If $q>5$, then
 $\frac{2(q+1)}{3}<q-1$, and the equality $\lambda^{2(q+1)/3}=1$ contradicts the assumption that $\lambda$ is primitive. If $q=5$, we have $a^4=1$ and this leads to $\lambda^2=1$, again contradicting the assumption that $\lambda$ is primitive. The proof is now complete.
\end{proof}

\begin{lemma}\label{lem_Lxsq}
 If $q$ is odd, then $|L_x\cap\square|=\frac{q+1}{6}$ for each $x\in L_0$.
\end{lemma}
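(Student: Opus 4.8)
The plan is to locate and exploit a symmetry of the multiset $W_x$ defined in \eqref{eqn_Wx}. The first step is to recall from the proof of Proposition~\ref{keyodd} that $L_x$ is in fact \emph{uniquely} determined by $W_x$: since $W_x=\F_q^*+3L_x^{(3)}$ in $\Z[\F_q^*]$ and the cube map is a bijection of $\F_q$, an element $t\in\F_q^*$ lies in $L_x$ if and only if the multiplicity of $t^3$ in $W_x$ equals $4$ rather than $1$. Consequently, any permutation of $\F_q^*$ that fixes the multiset $W_x$ also fixes the formal sum $L_x^{(3)}$, and hence the set $L_x$.

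The key observation is that the permutation $\phi\colon\alpha\mapsto\gamma\alpha^{-1}$ of $\F_q^*$ fixes $W_x$: it simply interchanges the two multisets $[\N_{q^3/q}(\lambda+z):\lambda\in\F_q]$ and $[\gamma\N_{q^3/q}(\lambda+z)^{-1}:\lambda\in\F_q]$ whose union is $W_x$. Therefore $L_x^{(3)}$, and hence $L_x$, is invariant under $\phi$. Now $\gamma=\beta^{-3}=-27=(-3)^3$, and $-27$ has the \emph{unique} cube root $-3$ in $\F_q$ because $\gcd(3,q-1)=1$; since $u\mapsto u^{1/3}$ is a multiplicative bijection of $\F_q$, the $\phi$-invariance of $L_x^{(3)}$ translates directly into the invariance of $L_x$ under the map $\iota\colon t\mapsto -3\,t^{-1}$.

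Finally I would bring in the hypothesis that $q$ is odd: by Lemma~\ref{neg3ns}, $-3$ is a nonsquare in $\F_q$, so $\iota$ is a fixed-point-free involution (the equation $t^2=-3$ has no solution), and it therefore acts freely on $L_x$, partitioning it into orbits $\{t,\,-3t^{-1}\}$ of size $2$; moreover, since $-3$ is a nonsquare, each such orbit contains exactly one element of $\square$ and one nonsquare. Hence exactly half of $L_x$ consists of squares, giving $|L_x\cap\square|=\tfrac12|L_x|=\tfrac12\cdot\tfrac{q+1}{3}=\tfrac{q+1}{6}$, which is an integer since $q\equiv 5\pmod 6$. I do not expect a genuine obstacle here; the only point requiring care is the justification that $L_x$ itself — and not merely $W_x$ — must be $\iota$-invariant, which is precisely the uniqueness remark from the first step.
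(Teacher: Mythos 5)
Your proof is correct. The paper proves this lemma differently in form, though from the same underlying symmetry: it applies the quadratic character $\rho$ of $\F_q^*$ to the identity $W_x=\F_q^*+3L_x^{(3)}$, getting $\rho(W_x)=3\rho(L_x)$ (since $\rho(t^3)=\rho(t)$), and then observes that $\rho(W_x)=0$ because the two halves of $W_x$ contribute $(1+\rho(\gamma))\sum_{\lambda}\rho(\N_{q^3/q}(\lambda+z))$ with $\rho(\gamma)=\rho(-27)=-1$; hence $\rho(L_x)=0$ and the count follows. Your argument replaces this character computation by an explicit fixed-point-free, residue-flipping involution $t\mapsto -3t^{-1}$ on $L_x$ itself. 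The extra step you correctly identified as the delicate one — that invariance of the multiset $W_x$ under $\alpha\mapsto\gamma\alpha^{-1}$ forces invariance of $L_x$, via the multiplicity-$1$-or-$4$ dichotomy of Lemma~\ref{calpha} and the uniqueness of cube roots — is exactly what the paper avoids needing by working with $\rho(L_x)$ directly, so the paper's route is shorter. On the other hand, your route yields a genuinely stronger structural conclusion (the closure of each $L_x$ under $t\mapsto -3t^{-1}$) that the character argument does not produce, and which could in principle be of independent use in Section 5 where the sets $L_x$ are acted on by scalars. Both proofs ultimately rest on the same two inputs: the reciprocal symmetry of $W_x$ and the fact that $\gamma=-27$ is a nonsquare (Lemma~\ref{neg3ns}).
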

\begin{proof}
 From the proof of Proposition \ref{keyodd}, we know that  $W_x=\F_q^*+3L_x^{(3)}$ in the group ring $\Z[\F_q^*]$, where
 $W_x$ is the same as in \eqref{eqn_Wx}. Let $\rho$ be the quadratic character of $\F_q^*$, which maps squares to $1$ and nonsquares to $-1$. Then $\rho(\F_q^*)=0$ and $\rho(W_x)=3\rho(L_x)$. Since $\gamma=-27$ is a nonsquare in $\F_q^*$, we deduce that $\rho(W_x)=0$. It follows that $\rho(L_x)=0$, i.e., $L_x$ has the same number of squares as nonsquares. This completes the proof.
\end{proof}

\begin{theorem}
 The group $G$ has order $3(q^2+q+1)s$, where $s=1$ or $s=\gcd(2,\frac{q-1}{2})$ according as $q$ is even or odd.
\end{theorem}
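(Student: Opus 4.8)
The plan is to compute $|G|$ through the surjection $\iota\colon K\to\textup{PSL}(3,q)$ that is already at hand. Since $G\le K$ and $H\le G$ have been established just above, restricting $\iota$ to $G$ gives a surjection onto $\iota(G)$ with kernel $G\cap\ker(\iota)=G\cap\kappa_\square$ by Lemma~\ref{lem_keriota}, so $|G|=|\iota(G)|\cdot|G\cap\kappa_\square|$. By Lemma~\ref{lem_iotaG}, $|\iota(G)|=3(q^2+q+1)$ for $q>2$, so the theorem reduces to proving $|G\cap\kappa_\square|=s$. Since Lemma~\ref{lem_iotaG} excludes $q=2$, I would prove the statement for $q>2$; the case $q=2$ is degenerate, as there $|L_x|=1$ forces $E=T_0$ and hence $\M=\Q^+(5,q)\setminus(U_1\cup U_2)$ is the complement of a trivial line class.

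Next I would describe $G\cap\kappa_\square$ explicitly. A short computation gives $\kappa_a(O_{(1,z)})=O_{(1,a^{-2}z)}$ for $a\in\F_q^*$, so $\kappa_a(\M)=\M$ exactly when $a^{-2}E=E$. I would then feed in the coset structure of $E$ from Proposition~\ref{keyodd}: $E=\bigsqcup_{x\in L_0}xL_x$, where $L_0$ is a transversal of $\F_q^*$ in $T_0$ and each $L_x\subseteq\F_q^*$ has size $\tfrac{q+1}{3}$. Because multiplication by an element of $\F_q^*$ fixes each coset $x\F_q^*$ setwise and cannot mix the $q+1$ distinct such cosets, $a^{-2}E=E$ is equivalent to $a^{-2}L_x=L_x$ for every $x\in L_0$. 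Setting $t:=a^{-2}$, which is a square in $\F_q^*$, this says that $\langle t\rangle$ acts freely by multiplication on each $L_x$, and --- since $t$ is a square --- also on each $L_x\cap\square$; hence $\textup{ord}(t)$ divides $|L_x|=\tfrac{q+1}{3}$, divides $q-1$, and, for $q$ odd, divides $|L_x\cap\square|=\tfrac{q+1}{6}$ by Lemma~\ref{lem_Lxsq}.

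The remainder is elementary arithmetic. For $q$ even, $\gcd(\tfrac{q+1}{3},q-1)=1$, so $t=1$, hence $a^2=1$, hence $a=1$ since $q-1$ is odd, and $|G\cap\kappa_\square|=1=s$. For $q$ odd, so $q\equiv5\pmod{6}$, $\textup{ord}(t)$ divides $\tfrac{q+1}{3}$ and $q-1$, hence divides $\gcd(q+1,q-1)=2$. If $q\equiv1\pmod{4}$ then $q\equiv5\pmod{12}$, so $\tfrac{q+1}{6}$ is odd, and since $\textup{ord}(t)$ divides both $\tfrac{q+1}{6}$ and $2$ we get $\textup{ord}(t)=1$; thus $a^2=1$, $a\in\{\pm1\}\subseteq\square$, $G\cap\kappa_\square=\{\kappa_1,\kappa_{-1}\}$, and $|G\cap\kappa_\square|=2=\gcd(2,\tfrac{q-1}{2})=s$. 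If $q\equiv3\pmod{4}$ then $\textup{ord}(t)\in\{1,2\}$, but $\textup{ord}(t)=2$ would force $t=-1$, impossible since $t$ is a square and $-1$ is a nonsquare; so $t=1$, $a^2=1$, and $a=1$ since $-1\notin\square$, giving $|G\cap\kappa_\square|=1=\gcd(2,\tfrac{q-1}{2})=s$. In all cases $|G|=|\iota(G)|\cdot|G\cap\kappa_\square|=3(q^2+q+1)s$.

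The one genuinely delicate point is the sub-case $q\equiv1\pmod{8}$ inside $q\equiv1\pmod{4}$: there $\F_q^*$ really contains squares of order $4$, so the weaker divisibility $\textup{ord}(t)\mid\tfrac{q+1}{3}$ on its own only yields $t\in\{\pm1\}$ and leaves open $t=-1$ (that is, $\textup{ord}(a)=4$). It is exactly the parity information $|L_x\cap\square|=\tfrac{q+1}{6}$ provided by Lemma~\ref{lem_Lxsq}, together with $\tfrac{q+1}{6}$ being odd for $q\equiv1\pmod{4}$, that rules this out. Beyond this and the degenerate case $q=2$, the argument is routine once the reduction $|G|=|\iota(G)|\cdot|G\cap\kappa_\square|$ and the criterion $\kappa_a(\M)=\M\Leftrightarrow a^{-2}E=E$ are in place.
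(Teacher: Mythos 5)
For $q>2$ your argument is correct and is essentially the paper's proof: the same reduction of $|G|$ to $|\iota(G)|\cdot|G\cap\kappa_\square|$ via Lemmas~\ref{lem_keriota} and~\ref{lem_iotaG} (the paper phrases it as $G=H\times(G\cap\kappa_\square)$, which gives the same count), the same criterion $\kappa_a(\M)=\M\Leftrightarrow a^2L_x=L_x$ for every $x\in L_0$, and the same two divisibility inputs, $\mathrm{ord}(a^2)\mid\frac{q+1}{3}$ in general and $\mathrm{ord}(a^2)\mid\frac{q+1}{6}$ for $q$ odd via Lemma~\ref{lem_Lxsq}. Your free-orbit justification of these divisibilities replaces the paper's ``take the product over $L_x$'' device, and your isolation of $q\equiv 1\pmod 8$ as the only subcase where Lemma~\ref{lem_Lxsq} is truly needed is a nice clarification, but these are cosmetic differences; the case analysis and the verification that $\kappa_{-1}\in G$ when $q\equiv 1\pmod 4$ all check out.

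The one genuine gap is $q=2$, which the paper settles by a Magma computation and which your ``degenerate case'' remark does not actually prove. Worse, your own description of that case works against the statement: if $E=T_0$ (which is forced, since $L_x\subseteq\F_2^*=\{1\}$), then $\M$ is the complement of $U_1\cup U_2$ in $\Q^+(5,2)$, so $G$ contains all of $K$; by the paper's parametrization of $K$ (the condition $\det(h)\in\square$ is vacuous when $\square=\F_2^*$), $K\cong\textup{GL}(3,2)$ has order $168$, not $21$. Consistently, the proof of Lemma~\ref{lem_iotaG} genuinely breaks at $q=2$ because there is no primitive $\lambda\neq 1$ in $\F_2^*$ to run the argument with. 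So you cannot wave $q=2$ away as degenerate: you must either carry out the computation explicitly and reconcile it with the claimed order, or record that your proof (and, apparently, the clean statement) only covers $q>2$.
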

\begin{proof}
 The case $q=2$ is verified by Magma \cite{Magma}; so from now on we assume that $q>2$. By Lemma \ref{lem_iotaG}, $G$ lies in the group $H\times \kappa_\square$, where $\kappa_\square$ is as in Lemma \ref{lem_keriota}. We have shown that $H\le G$, so $G=H\times (G\cap\kappa_\square)$. It now suffices to determine the stabilizer of $\M$ in $\kappa_\square$.

 Suppose that $\kappa_a$ stabilizes $\M$, where $a$ is a square of $\F_q^*$. The condition $\kappa_a(\M)=\M$ is equivalent to  $a^2E=E$, i.e., $a^2L_x=L_x$ for each $x\in L_0$. Taking the product over the set on each side, we deduce that $a^{2(q+1)/3}=1$. If $q$ is even, then the order of $a$ divides $\gcd(2(q+1)/3,q-1)=1$, implying $a=1$.  If $q\equiv 3\pmod{4}$, then $\gcd(\frac{q-1}{2},\,\frac{2(q+1)}{3})=1$ and we also get $a=1$. If $q\equiv 1\pmod{4}$, then from $a^2L_x=L_x$ we deduce that $a^2(L_x\cap\square)=L_x\cap\square$. By Lemma \ref{lem_Lxsq} we have $|L_x\cap\square|=\frac{q+1}{6}$. By the same argument we get $a^{(q+1)/3}=1$. In this case, we have $\gcd(\frac{q+1}{3},\frac{q-1}{2})=2$, so $a^2=1$, i.e., $a=\pm 1$. Since $-1$ is in $\square$, we see that indeed $\kappa_{-1}$ is in $G$. This completes the proof.
\end{proof}

As a corollary, the stabilizer of $\M$ in $\textup{P}\Omega(V)$ has order $3(q^2+q+1)$. By the isomorphism $\textup{P}\Omega^+(6,q)\cong \textup{PSL}(4,q)$, we see that the stabilizer of the corresponding Cameron-Liebler line class in $\textup{PSL}(4,q)$ has  order $3(q^2+q+1)$.

\section{Concluding Remarks}

In this paper, we have constructed Cameron-Liebler line classes in $\textup{PG}(3,q)$ with parameter $x=(q+1)^2/3$ for all prime powers $q$ congruent to $2$ modulo $3$. This is a contribution to the study of the central problem about Cameron-Liebler line classes in $\PG(3,q)$. Besides the trivial examples with $x=1,2$, all known infinite families of Cameron-Liebler line classes prior to our work have parameters $x=(q^2-1)/2$ or $x=(q^2+1)/2$, up to complement.

Most notably, we have constructed the first infinite family of nontrivial Cameron-Liebler line classes in $\textup{PG}(3,q)$ with $q$ even. In contrast, the first nontrivial infinite family of Cameron-Liebler line classes in $\textup{PG}(3,q)$ for odd $q$ was constructed by Bruen and Drudge \cite{bd} twenty years ago. The major obstacle in the characteristic two case seems to be that such line classes, if they exist, tend not to be highly symmetric. In our construction, the Cameron-Liebler line classes have automorphism groups of medium sizes. This fact makes it difficult to give a neat geometric description of the objects we have constructed. Our proof is very algebraic, due to the nature of our construction.

In Section 5, we have determined the stabilizers of our Cameron-Liebler line classes in $\textup{PSL}(4,q)$. The size of the stabilizer is $3(q^2+q+1)$. It will be of particular interest to find infinite families of Cameron-Liebler line classes whose stabilizers in $\textup{PSL}(4,q)$ do not grow as $q$ increases.

\section*{Appendix: Proof of Proposition~\ref{odd1.2}}
In this appendix, we will prove  Proposition~\ref{odd1.2}. Recall that $N=q^2+q+1$. We start with an observation on Gauss sums. Let $S$ be any subset of $\F_{q^3}^*$, and set $T_S:=\{(s,t):\,0\le i\le N-1, 0\le t\le q-2, w^{s(q-1)+tN}\in S\}$. By the definition of Gauss sums, for any integers $i$ and $\ell$ and $\epsilon,\delta\in \{1,-1\}$ we have
\begin{align}
 G(\chi_2^{\epsilon i}\chi_1^{\delta \ell}) \chi_2^i\chi_1^{\ell}(S)
 = & \sum_{y\in \F_{q^3}^*}\sum_{z\in S}\chi_2^{\epsilon i}\chi_1^{\delta \ell}(y)\chi_2^i\chi_1^{\ell}(z)\psi_{\F_{q^3}}(y)\nonumber                                                    \\
 = & \sum_{y\in \F_{q^3}^*}\sum_{(s,t)\in T_S}\chi_2^{\epsilon i}(yw^{\epsilon s(q-1)+\epsilon tN})\chi_1^{\delta \ell}(yw^{ \delta s(q-1)+\delta tN})\psi_{\F_{q^3}}(y).\label{eq:rem1}
\end{align}
Since $\chi_2(w^{s(q-1)})=1$ and $\chi_1(w^{tN})=1$, continuing from \eqref{eq:rem1}, we have
\begin{align}
 G(\chi_2^{\epsilon i}\chi_1^{\delta \ell})\chi_2^{i}\chi_1^{ \ell}(S)
 = & \sum_{y\in \F_{q^3}^*}\sum_{(s,t)\in T_S}
 \chi_2^{\epsilon i}(yw^{\delta s(q-1)+\epsilon tN})\chi_1^{\delta \ell}(yw^{\delta s(q-1)+\epsilon tN})\psi_{\F_{q^3}}(y)\nonumber                  \\
 =  & \sum_{z\in\F_{q^3}^*}\sum_{(s,t)\in T_S}\chi_2^{\epsilon i}\chi_1^{\delta \ell}(z)\psi_{\F_{q^3}}(zw^{-\delta s(q-1)-\epsilon tN}).\label{tech}
\end{align}
This identity will be used in the rest of the proof.

Let $D_3=\beta D_2=[x\N_{q^3/q}(\lambda+x^q-x^{q^2})^{-\frac{1}{3}}: x\in L_0,\lambda\in \F_q]$. To evaluate $\Sigma_1$, we  need the following observation: By \eqref{tech} we have
\begin{equation}\label{tech1}
 G(\chi_2^{i}\chi_1^{-\ell}) \chi_2^{i}\chi_1^\ell(D_1)
 =\sum_{z\in \F_{q^3}^*}\chi_2^i\chi_1^{-\ell}(z)\psi_{\F_{q^3}}(zD_3)
\end{equation}
for any $1\leq i\leq q-2$ and $0\leq \ell\leq q^2+q$.

\begin{lemma}\label{chvD3}
 Let $R=\{\lambda+(h^{q^2}-h^q):\lambda\in \F_q, h\in L_0\}$.
 For $z\in\F_{q^3}^*$, it holds that
 \[
  \psi_{\F_{q^3}}(zD_3)=
  \begin{cases}
   q^2+q,                    & \emph{if }z\in \F_q^*,                        \\
   -1+\psi_{\F_{q^3}}(eC_0), & \emph{if }z\in eR\emph{ for some }e\in\F_q^*.
  \end{cases}
 \]
\end{lemma}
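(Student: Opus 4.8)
The plan is to expand $\psi_{\F_{q^3}}(zD_3)$ as a double sum over the index set $L_0\times\F_q$ of the multiset $D_3$ and then reorganize it. The case $z\in\F_q^*$ is immediate: every member of $D_3$ has the form $x\,t$ with $x\in L_0\subseteq T_0$ and $t\in\F_q^*$, so $\Tr_{q^3/q}(z\,xt)=zt\,\Tr_{q^3/q}(x)=0$, every summand equals $1$, and $\psi_{\F_{q^3}}(zD_3)=|D_3|=q^2+q$.

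For the remaining case I would first reparametrize $D_3$. Put $z_x:=x^q-x^{q^2}$ and $\nu(u):=u\,\N_{q^3/q}(u)^{-1/3}$; the map $\nu$ is a surjective homomorphism $\F_{q^3}^*\to C_0$ with kernel $\F_q^*$ that restricts to the identity on $C_0$, so it sends $T_0$ into $L_0$. If $(x,\lambda)\in L_0\times\F_q$ and $v:=x\,\N_{q^3/q}(\lambda+z_x)^{-1/3}$, then $x=\nu(v)$ is forced and $\N_{q^3/q}(\lambda+z_x)=\N_{q^3/q}(v)^{-1}$; substituting $\lambda\mapsto\N_{q^3/q}(v)^{1/3}\lambda$ and using $\N_{q^3/q}(x)=1$ turns this equation into $\N_{q^3/q}(\lambda+v^q-v^{q^2})=1$. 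Consequently, as a multiset $D_3=\biguplus_{v\in T_0}n(v)\cdot[v]$ with $n(v):=|\{\lambda\in\F_q:\N_{q^3/q}(\lambda+v^q-v^{q^2})=1\}|$, and hence $\psi_{\F_{q^3}}(zD_3)=\sum_{v\in T_0}n(v)\,\psi_{\F_{q^3}}(zv)$.

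Next I would collapse this to a sum over $C_0$. Since $\N_{q^3/q}(u)=1$ iff $u\in C_0$, we have $n(v)=|C_0\cap(\F_q+z_v)|=\sum_{\mu\in C_0}\mathds 1[\mu-z_v\in\F_q]$ with $z_v:=v^q-v^{q^2}$; interchanging summations gives $\psi_{\F_{q^3}}(zD_3)=\sum_{\mu\in C_0}\sum_{v\in T_0,\ z_v\in\mu+\F_q}\psi_{\F_{q^3}}(zv)$. For each $\mu$ the set $(\mu+\F_q)\cap\{w:\Tr_{q^3/q}(w)=0\}$ is the single point $\widetilde\mu:=\mu-\tfrac13\Tr_{q^3/q}(\mu)$, and because the $\F_q$-linear map $v\mapsto v^q-v^{q^2}$ has kernel $\F_q$ (so it restricts to a bijection of the trace-zero hyperplane, using $3\ne0$) there is a unique $v_\mu$ of trace $0$ with $v_\mu^q-v_\mu^{q^2}=\widetilde\mu$; a direct check gives $v_\mu=\tfrac13(\mu^{q^2}-\mu^q)$, and $v_\mu\in T_0$ precisely when $\mu\ne1$. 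Peeling off the $\mu=1$ term (which contributes $\psi_{\F_{q^3}}(0)=1$) yields $\psi_{\F_{q^3}}(zD_3)=-1+\sum_{\mu\in C_0}\psi_{\F_{q^3}}(z\,v_\mu)$. Now write $z=e r$ with $e\in\F_q^*$ and $r=\lambda_0+h^{q^2}-h^q$, $\lambda_0\in\F_q$, $h\in L_0$. Using Frobenius-invariance of the trace, $\mu^q+\mu^{q^2}=\Tr_{q^3/q}(\mu)-\mu$ and $\Tr_{q^3/q}(h)=0$, one verifies $\Tr_{q^3/q}\!\big((h^{q^2}-h^q)(\mu^{q^2}-\mu^q)\big)=3\,\Tr_{q^3/q}(h\mu)$; combined with $\Tr_{q^3/q}(\mu^{q^2}-\mu^q)=0$ this gives $\Tr_{q^3/q}(z\,v_\mu)=e\,\Tr_{q^3/q}(h\mu)$, so $\psi_{\F_{q^3}}(z\,v_\mu)=\psi_{\F_{q^3}}(e h\mu)$. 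Summing over $\mu\in C_0$ and using that $h\in L_0\subseteq C_0$ permutes $C_0$ under multiplication, $\sum_{\mu\in C_0}\psi_{\F_{q^3}}(z v_\mu)=\psi_{\F_{q^3}}(e h C_0)=\psi_{\F_{q^3}}(eC_0)$, whence $\psi_{\F_{q^3}}(zD_3)=-1+\psi_{\F_{q^3}}(eC_0)$.

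The main obstacle I anticipate is the reparametrization in the second paragraph and the inversion of $v\mapsto v^q-v^{q^2}$ on trace-zero elements in the third; once $\psi_{\F_{q^3}}(zD_3)$ is brought into the form $-1+\sum_{\mu\in C_0}\psi_{\F_{q^3}}(zv_\mu)$, the trace identity $\Tr_{q^3/q}((h^{q^2}-h^q)(\mu^{q^2}-\mu^q))=3\Tr_{q^3/q}(h\mu)$ does the rest and is an easy computation.
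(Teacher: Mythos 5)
Your proof is correct, and it takes a genuinely different route from the paper's. The paper proceeds analytically: it first shows that $R$ is a system of coset representatives of $(\F_{q^3}^*\setminus\F_q^*)/\F_q^*$, then expands the cube root of the norm via multiplicative characters of order $q-1$, converts everything to complete character sums using Gauss-sum identities, and evaluates the resulting pieces $W_1,\dots,W_4$ one by one. You instead work entirely at the level of the multiset: the reparametrization $D_3=\sum_{v\in T_0}n(v)\cdot[v]$ with $n(v)=|\{\lambda:\lambda+v^q-v^{q^2}\in C_0\}|$, the swap of summation over $C_0$, the explicit inverse $v_\mu=\tfrac13(\mu^{q^2}-\mu^q)$ of $v\mapsto v^q-v^{q^2}$ on the trace-zero hyperplane, and the trace identity $\Tr_{q^3/q}\bigl((h^{q^2}-h^q)(\mu^{q^2}-\mu^q)\bigr)=3\Tr_{q^3/q}(h\mu)$ together give $\psi_{\F_{q^3}}(zD_3)=-1+\psi_{\F_{q^3}}(eC_0)$ with no Gauss sums at all; I checked each of these steps and they hold. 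What your approach buys is brevity and transparency: it exhibits the multiplicity function of $D_3$ explicitly (it is essentially the counting function $\mu_z$ that reappears in Proposition~\ref{odd1.2}) and it shows the stated value is obtained for \emph{any} representation $z=er$ with $r\in R$. What the paper's computation additionally delivers, and yours does not, is the fact that $R$ is a transversal of $(\F_{q^3}^*\setminus\F_q^*)/\F_q^*$ — this is what makes the two cases of the lemma exhaustive and is reused later (e.g.\ in Lemma~\ref{Re'} and in the evaluation of $\Omega_2,\Omega_3$); if your proof were substituted for the paper's, that one short verification would need to be supplied separately.
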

\begin{proof}
 We first note that $R$ is a system of coset representatives for $(\F_{q^3}^*\setminus\F_{q}^*)/\F_{q}^*$.
 Assume that there are $\lambda_1,\lambda_2\in\F_q$, $d\in \F_q^\ast$ and $h_1,h_2\in L_0$ such that $\lambda_1+(h_1^{q^2}-h_1^q)=d\lambda_2+d(h_2^{q^2}-h_2^q)$. Then by taking trace of both sides, we have $\lambda_1=d\lambda_2$. Note that $h_1^{q^2}-h_1^q=d(h_2^{q^2}-h_2^q)$ implies that $h_1^{q^2}-dh_2^{q^2}=h_1^q-dh_2^q$, i.e.,  $h_1-d h_2\in\F_q$. Hence,  we have
 \[
  0=\Tr_{q^3/q}(h_1)-d\Tr_{q^3/q}(h_2)=\Tr_{q^3/q}(h_1-dh_2)=3(h_1-dh_2),
 \]
 which implies that $h_1=dh_2$. By taking norm of both sides, we have $d=1$, $\lambda_1=\lambda_2$ and $h_1=h_2$. 
 It is clear that none of the elements of $R$ is in $\F_q^*$. Hence $R$ is a system of coset representatives of $(\F_{q^3}^*\setminus\F_{q}^*)/\F_{q}^*$.

 Next we evaluate $\psi_{\F_{q^3}}(zD_3)$. Let $\eta_{q-1}$ be a fixed multiplicative character of  order $q-1$ of $\F_{q}$. Then,  
 we have
 \begin{align}
  \psi_{\F_{q^3}}(zD_3)= & \sum_{\lambda\in\F_q}\sum_{x\in L_0}\psi_{\F_{q^3}}(zx\N_{q^3/q}(\lambda+x^{q}-x^{q^2})^{-1/3})\nonumber \\
  =                      & \frac{1}{q-1}\sum_{c\in \F_q^\ast}
  \sum_{\lambda\in \F_q}
  \sum_{x\in L_0}\sum_{i=0}^{q-2}
  \psi_{\F_{q^3}}(zxc^{-1})
  \eta_{q-1}^i((\lambda+x^q-x^{q^2})^{N})\eta_{q-1}^{-3i}(c)\nonumber                                                               \\
  =                      & \frac{1}{q-1}\sum_{c\in \F_q^\ast}
  \sum_{\lambda\in \F_q}
  \sum_{x\in L_0}\sum_{i=1}^{q-2}
  \psi_{\F_{q^3}}(zxc^{-1})
  \eta_{q-1}^i((\lambda+x^q-x^{q^2})^{N})\eta_{q-1}^{-3i}(c)\label{eq1}                                                             \\
                         & \,+\frac{1}{q-1}\sum_{c\in \F_q^\ast}
  \sum_{\lambda\in \F_q}
  \sum_{x\in L_0}
  \psi_{\F_{q^3}}(zxc^{-1}). \label{eq1R}
 \end{align}
 Denote the summands in \eqref{eq1} and \eqref{eq1R} by $W_1$ and $W_2$, respectively.
 Then,  $\psi_{\F_{q^3}}(zD_3)=W_1+W_2$.
 Here, it is easy to see that
 \begin{align*}
  W_2 & =\frac{1}{q-1}\sum_{c\in \F_q^\ast}
  \sum_{d\in \F_q}
  \sum_{x\in C_0}
  \psi_{\F_{q^3}}(zxc^{-1}+xd)              \\
      & =\frac{1}{q-1}
  \sum_{d\in \F_q}
  \sum_{x\in \F_{q^3}^\ast}
  \psi_{\F_{q^3}}(x(z+d))=\frac{1}{q-1}
  \begin{cases}
   q^3-q, & \mbox{ if $z\in \F_q$},     \\
   -q,    & \mbox{ if $z\not\in \F_q$}.
  \end{cases}
 \end{align*}
 We next evaluate $W_1$. Let $\rho_{q-1}$ be the lift of $\eta_{q-1}$ to $\F_{q^3}^*$, i.e., $\rho_{q-1}(x)=\eta_{q-1}(x^{N})$. We note that for any $s\in \F_{q}^*$, $\rho_{q-1}(s)=\eta_{q-1}(s^{N})=\eta_{q-1}(s^3)$.
 Then
 \begin{equation*}
  W_1=\frac{1}{q-1}\sum_{c\in \F_q^\ast}
  \sum_{\lambda\in \F_q}
  \sum_{x\in L_0}\sum_{i=1}^{q-2}
  \psi_{\F_{q^3}}(zxc^{-1})
  \rho_{q-1}^i(\lambda+x^q-x^{q^2})\rho_{q-1}^{-i}(c).
 \end{equation*}
 By Lemma~\ref{orthchar}, we have
 \[
  \rho_{q-1}^i(\lambda+x^q-x^{q^2})=\frac{G(\rho_{q-1}^i)}{q^3}\sum_{b\in  \F_{q^3}^\ast}
  \psi_{\F_{q^3}}(b(\lambda+x^q-x^{q^2}))\rho_{q-1}^{-i}(-b), \quad 1\leq i\leq q-2.
 \]
 Substituting $\rho_{q-1}^i(\lambda+x^q-x^{q^2})$ in the expression for $W_1$ by the right-hand-side expression of the above equation, we have
 \begin{align}
  W_1= & \frac{1}{q^3(q-1)}\sum_{c,d\in \F_q^\ast}
  \sum_{\lambda\in \F_q}
  \sum_{x\in L_0}\sum_{i=0}^{q-2}\sum_{h\in C_0}
  G(\rho_{q-1}^i)\psi_{\F_{q^3}}(zxc^{-1}+hd(\lambda+x^q-x^{q^2}))\rho_{q-1}^{-i}(-dc) \label{eq3} \\
       & -\frac{G(\rho_{q-1}^0)}{q^3(q-1)}\sum_{c,d\in \F_q^\ast}
  \sum_{\lambda\in \F_q}
  \sum_{x\in L_0}\sum_{h\in C_0}
  \psi_{\F_{q^3}}(zxc^{-1}+h\lambda+hd(x^q-x^{q^2})). \label{eq32}
 \end{align}
 Denote the summands in \eqref{eq3} and \eqref{eq32} by $W_3$ and $W_4$,
 respectively.
 Then $W_1=W_3+W_4$. Here, $W_4$ 
 is reformulated as
 \begin{equation}\label{eq4}
  W_4=\frac{1}{q^3(q-1)}\sum_{c,d\in \F_q^\ast}
  \sum_{\lambda'\in \F_q}
  \sum_{x\in C_0}\sum_{h\in L_0}
  \psi_{\F_{q^3}}(zxc^{-1}+x\lambda'+xd(h^{q^2}-h^{q})). \nonumber
 \end{equation}
 Since $R$ is a system of representatives of $(\F_{q^3}^*\setminus\F_q^*)/\F_{q}^*$, we have
 \begin{align*}
    & \{\lambda'+d(h^{q^2}-h^{q}):\lambda'\in \F_q,d\in \F_q^\ast,h\in L_0\}                         \\
  = & \{d(\lambda'+h^{q^2}-h^q):\lambda'\in \F_q,d\in \F_q^\ast,h\in L_0\}=\F_{q^3}\setminus \F_{q}.
 \end{align*}
 Then,
 \begin{align}
  W_4= & \frac{1}{q^3(q-1)}
  \sum_{c\in \F_q^\ast}
  \sum_{x\in C_0}\sum_{y\in \F_{q^3}}
  \psi_{\F_{q^3}}(zxc^{-1}+xy)-\frac{1}{q^3(q-1)}\sum_{c\in \F_q^\ast}
  \sum_{x\in C_0}\sum_{y\in \F_{q}}
  \psi_{\F_{q^3}}(zxc^{-1}+xy) \nonumber          \\
  =    & -\frac{1}{q^3(q-1)}\sum_{c\in \F_q^\ast}
  \sum_{x\in C_0}\sum_{y\in \F_{q}}
  \psi_{\F_{q^3}}(xc(z+y))=
  -\frac{1}{q^3(q-1)}\begin{cases}
   q^3-q, & \mbox{ if $z\in \F_q$},               \\
   -q,    & \mbox{ if $z\not\in \F_q$}. \nonumber
  \end{cases}
 \end{align}
 To evaluate $W_3$, we use
 \[
  \sum_{i=0}^{q-2}G(\rho_{q-1}^i)\rho_{q-1}^{-i}(-dc)
  =(q-1)\sum_{y\in C_0}\psi_{\F_{q^3}}(-dcy),
 \]
 which follows from Lemma~\ref{partialGS}. Then  
 \begin{align}
  W_3= & \frac{1}{q^3}\sum_{c,d\in \F_q^\ast}
  \sum_{\lambda\in \F_q}
  \sum_{x\in L_0}\sum_{h,y\in C_0}
  \psi_{\F_{q^3}}(zxc^{-1}+hd(\lambda+x^q-x^{q^2})-dcy)
  \nonumber                                   \\
  =    & \frac{1}{q^3}\sum_{c,d\in \F_q^\ast}
  \sum_{\lambda'\in \F_q}
  \sum_{h\in L_0}\sum_{x,y\in C_0}
  \psi_{\F_{q^3}}(zxc^{-1}+x\lambda'+xd(h^{q^2}-h^{q}))\psi_{\F_{q^3}}(-dcy).\nonumber
 \end{align}
 Since $R$ is a system of coset representatives for $(\F_{q^3}^\ast\setminus \F_{q}^\ast)/ \F_q^\ast$,
 we have
 \begin{align}
  W_3= & \frac{1}{q^3}
  \sum_{c,d\in \F_q^\ast}
  \sum_{z'\in R}\sum_{x,y\in C_0}
  \psi_{\F_{q^3}}(zxc^{-1}+xdz')\psi_{\F_{q^3}}(-dcy)\nonumber      \\
  =    & \frac{1}{q^3}
  \sum_{c,e\in \F_q^\ast}
  \sum_{z'\in R}\sum_{x,y\in C_0}
  \psi_{\F_{q^3}}(zxc^{-1}+xc^{-1}ez')\psi_{\F_{q^3}}(-ey)\nonumber \\
  =    & \frac{1}{q^3}
  \sum_{e\in \F_q^\ast}
  \sum_{z'\in R}\sum_{y\in C_0}\sum_{u\in \F_{q^3}^\ast}
  \psi_{\F_{q^3}}(u(z-ez'))\psi_{\F_{q^3}}(ey).\nonumber
 \end{align}
 Here, if $z\in \F_q$,
 \[
  W_3=-(q^2+q)\sum_{e\in \F_q^\ast}
  \sum_{y\in C_0}\psi_{\F_{q^3}}(ey)=q^2+q.
 \]
 If $z\not\in \F_q$,  there is a unique $e\in \F_{q}^\ast$ such that ${e}^{-1}\in z^{-1}R\cap \F_{q}^\ast$. For this $e$, we have
 \[
  W_3=q^2+q+q^3 \psi_{\F_{q^3}}(eC_0).
 \]
 Summing up, we have
 \[
  \psi_{\F_{q^3}}(zD_3)=W_2+W_3+W_4=
  \begin{cases}
   q^2+q,                    & \mbox{if $z\in \F_q$,} \\
   -1+\psi_{\F_{q^3}}(eC_0), & \mbox{if $z\in eR$.}
  \end{cases}
 \]
 This completes the proof of the lemma.
\end{proof}

\begin{lemma}\label{Re'}
 For $e\in \F_{q}^*$, let $R_e'=\{x^{-1}y: x\in C_0, y \in \F_q^*, xy \in eR\}$. Then, $R_e'=-eR$.
\end{lemma}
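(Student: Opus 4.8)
The plan is to recast $R_e'=-eR$ as a statement about a single involution of $\F_{q^3}^*$. Since $q\equiv 2\pmod 3$ we have $\gcd(q^2+q+1,q-1)=1$, so $\F_{q^3}^*=C_0\times\F_q^*$ as an internal direct product; let $\sigma$ be the automorphism of $\F_{q^3}^*$ determined by $\sigma(xy)=x^{-1}y$ for $x\in C_0$, $y\in\F_q^*$. It is an involution, it fixes $\F_q^*$ pointwise, and (writing $(\cdot)^{1/3}$ for the inverse of $y\mapsto y^3$ on $\F_q$) it is given explicitly by $\sigma(w)=\N_{q^3/q}(w)^{-1/3}\,w^qw^{q^2}$, because the $C_0$-component of $w$ is $w\N_{q^3/q}(w)^{-1/3}$ and its $\F_q^*$-component is $\N_{q^3/q}(w)^{1/3}$. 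From the definition of $R_e'$ we get $R_e'=\sigma(eR)=e\,\sigma(R)$, so the lemma reduces to $\sigma(R)=-R$; and since $\sigma$ is an involution with $\sigma(-1)=-1$, it suffices to prove the single inclusion $\sigma(R)\subseteq-R$ (apply $\sigma$ and multiply by $-1$ for the reverse).

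Next I would record the needed descriptions. With $W:=\ker\Tr_{q^3/q}$ and $\Phi_0:=\{h^{q^2}-h^q:h\in L_0\}\subseteq W$, we have $R=\{\lambda+z:\lambda\in\F_q,\ z\in\Phi_0\}$ and hence $-R=\{\lambda+z:\lambda\in\F_q,\ z\in-\Phi_0\}$; since $\F_{q^3}=\F_q\oplus W$, an element $r$ lies in $R$ (resp. $-R$) iff its trace-zero part $r-\tfrac13\Tr_{q^3/q}(r)$ lies in $\Phi_0$ (resp. $-\Phi_0$). The key auxiliary fact to prove is an intrinsic characterization of $\Phi_0$: for $z\in W$, one has $z\in\Phi_0$ if and only if $\N_{q^3/q}(z+2z^{q^2})=-27$. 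One direction is immediate, since $z=h^{q^2}-h^q$ gives $z+2z^{q^2}=h^{q^2}+h^q-2h=\Tr_{q^3/q}(h)-3h=-3h$, of norm $-27\N_{q^3/q}(h)=-27$. For the converse, given such a $z$ the element $h:=-\tfrac13(z+2z^{q^2})$ has trace $0$ and norm $-\tfrac1{27}(-27)=1$, so $h\in L_0$, and a short Frobenius computation (using $z^q+z^{q^2}=-z$) gives $h^{q^2}-h^q=z$.

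Then I would take a generic $v=\lambda+h^{q^2}-h^q\in R$, $\lambda\in\F_q$, $h\in L_0$, and chase $\sigma(v)$. Put $t:=\N_{q^3/q}(v)^{-1/3}\in\F_q^*$, so $\sigma(v)=t\,v^qv^{q^2}$, and set $M:=v^qv^{q^2}-\tfrac13e_2(v)\in W$, where $e_2(v)=\Tr_{q^3/q}(v^qv^{q^2})\in\F_q$. The trace-zero part of $\sigma(v)$ is $tM$, so by the above $\sigma(v)\in-R$ amounts to $-tM\in\Phi_0$, i.e. (as $\Tr_{q^3/q}(M)=0$) to the single scalar identity $t^3\N_{q^3/q}(M+2M^{q^2})=27$, equivalently $\N_{q^3/q}(M+2M^{q^2})=27\,\N_{q^3/q}(v)$. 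Using $M^{q^2}=vv^q-\tfrac13e_2(v)$, the simplification $v^{q^2}+2v=3(\lambda+h^{q^2})$ (from $2h^{q^2}-h-h^q=3h^{q^2}-\Tr_{q^3/q}(h)$), and $e_2(v)=3\bigl(\lambda^2+\Tr_{q^3/q}(h^{1+q})\bigr)$, an expansion gives $M+2M^{q^2}=3\bigl(\lambda h+h^{1+q^2}-h^{2q^2}-\Tr_{q^3/q}(h^{1+q})\bigr)$.

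The hard part — the real content of the lemma — will be recognizing that this bracketed element equals $vh$; granting it, $M+2M^{q^2}=3vh$ gives $\N_{q^3/q}(M+2M^{q^2})=27\,\N_{q^3/q}(v)\N_{q^3/q}(h)=27\,\N_{q^3/q}(v)$ since $\N_{q^3/q}(h)=1$, exactly as needed, so $-tM\in\Phi_0$ and $\sigma(v)\in-R$. The identity itself is a one-line trace computation: expanding $vh=\lambda h+h^{1+q^2}-h^{1+q}$, the difference of the two sides is $-h^{2q^2}-h^{1+q^2}-h^{q+q^2}=-h^{q^2}\Tr_{q^3/q}(h)=0$. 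This yields $\sigma(R)\subseteq-R$, hence $\sigma(R)=-R$, hence $R_e'=e\,\sigma(R)=-eR$. All the computations are characteristic-free, so the argument also covers $q$ even, where it degenerates gracefully ($\Phi_0=L_0$, $2z^{q^2}=0$, $-1=1$).
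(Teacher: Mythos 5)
Your proof is correct. It shares the paper's overall skeleton --- reduce to $e=1$ via $R_e'=eR_1'$, then show that inverting the $C_0$-component of a generic element of $R$ lands in $-R$ --- but the execution of the key step is genuinely different. The paper works entirely inside the parametrization: starting from $xy=\lambda+h^{q^2}-h^q$ it uses $\lambda^q=\lambda$ to solve explicitly for $h=-\tfrac{1}{3}y(x^{q^2}-x^q)$, observes that $h_2:=xh$ again lies in $L_0$ (norms multiply and $\N_{q^3/q}(x)=1$ for $x\in C_0$), and directly exhibits $x^{-1}y=-(\lambda_2+h_2^{q^2}-h_2^q)$. You instead prove an intrinsic membership test for $\Phi_0=\{h^{q^2}-h^q:\,h\in L_0\}$ --- namely that a trace-zero $z$ lies in $\Phi_0$ iff $\N_{q^3/q}(z+2z^{q^2})=-27$, equivalently that $z\mapsto -\tfrac13(z+2z^{q^2})$ inverts $h\mapsto h^{q^2}-h^q$ --- and then verify the resulting norm identity $\N_{q^3/q}(M+2M^{q^2})=27\,\N_{q^3/q}(v)$ via the factorization $M+2M^{q^2}=3vh$. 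I checked your intermediate identities ($v^{q^2}+2v=3(\lambda+h^{q^2})$, $e_2(v)=3(\lambda^2+\Tr_{q^3/q}(h^{1+q}))$, and the final cancellation $-h^{2q^2}-h^{1+q^2}-h^{q+q^2}=-h^{q^2}\Tr_{q^3/q}(h)=0$), and they all hold; the reduction $R_e'=\sigma(eR)=e\sigma(R)$ and the involution argument for upgrading the inclusion to an equality are also sound. The paper's route is shorter and more transparent because it never leaves the coordinates $(\lambda,h)$; yours costs more computation but produces a reusable coordinate-free criterion for membership in $R$ and makes the characteristic-two degeneration ($\Phi_0=L_0$) visible. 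Both arguments are characteristic-free.
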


\begin{proof}
 It is clear that $R_e'=eR_1'$. Therefore we only need to show that $R_1'=-R$. 
 Take $z=xy\in R$ with $x\in C_0$ and $y\in \F_q^*$. There exists $(\lambda,h)\in \F_q\times L_0$ such that $$xy=\lambda+h^{q^2}-h^q.$$
 Let $\lambda_2=\frac{-y\Tr_{q^3/q}(x^{-1})}{3}$ and $h_2=\frac{y(x^{-q^2}-x^{-q})}{3}$.
 A direct computation shows that $x^{-1}y=-(\lambda_2+h_2^{q^2}-h_2^q)$, and it is clear that $\lambda_2\in\F_q$. To complete the proof, it suffices to show that $h_2\in L_0$.

 Using $\lambda=xy-(h^{q^2}-h^q)$ and $\lambda^q=\lambda$, we obtain
 $$x^qy-h+h^{q^2}=xy-h^{q^2}+h^q,$$
 which implies that $(x^q-x)y=-3h^{q^2}$, i.e., $h=\frac{y(x^{q^2}-x^q)}{-3}$.
 Note that $h_2=\frac{y(x^{q+1}-x^{q^2+1})}{3}=\frac{xy(x^{q^2}-x^q)}{-3}=xh$. Hence, $\N_{q^3/q}(h_2)=1$.
 It is clear that $\Tr_{q^3/q}(h_2)=0$. Therefore, $h_2\in L_0$.
\end{proof}

We now give the promised proof of Proposition~\ref{odd1.2}.
\begin{proof}[Proof of Proposition~\ref{odd1.2}]
 By \eqref{tech1}, 
 we have
 \begin{align}
  \Sigma_1
  =\frac{1}{3(q^3-1)}\sum_{i=1}^{q-2}\sum_{\ell=0}^{N-1}\sum_{z\in\F_{q^3}^*}G(\chi_2^{-i}\chi_1^{-\ell})\chi_2^i\chi_1^{-\ell}(z)\chi_1^\ell(ab)\chi_2^{i}(ab^{-1})\psi_{\F_{q^3}}(zD_3).
  \label{eq:propodd1}
 \end{align}
 By Lemma~\ref{chvD3}, continuing from \eqref{eq:propodd1},
 we have
 \begin{align}
  \Sigma_1= & \frac{N-1}{3(q^3-1)}\sum_{i=1}^{q-2}\sum_{\ell=0}^{N-1}\sum_{z\in\F_q^*}G(\chi_2^{-i}\chi_1^{-\ell})\chi_2^i\chi_1^{-\ell}(z)\chi_1^\ell(ab)\chi_2^{i}(ab^{-1})
  \nonumber                                                                                                                                                                                                          \\
            & +\frac{1}{3(q^3-1)}\sum_{e\in\F_q^*}\sum_{z\in eR}\sum_{i=1}^{q-2}\sum_{\ell=0}^{N-1}(-1+\psi_{\F_{q^3}}(eC_0))G(\chi_2^{-i}\chi_1^{-\ell})\chi_2^i\chi_1^{-\ell}(z)\chi_1^\ell(ab)\chi_2^{i}(ab^{-1}). 
  \label{eq:propodd2}
 \end{align}
 Here, by \eqref{tech} and Lemma~\ref{Re'},  continuing from
 \eqref{eq:propodd2},
 we have
 \begin{align}
  \Sigma_1= & \frac{N-1}{3(q^3-1)}\sum_{i=1}^{q-2}\sum_{\ell=0}^{N-1}\sum_{z\in\F_{q^3}^*}\chi_2^{-i}\chi_1^{-\ell}(z)\chi_1^\ell(ab)\chi_2^{i}(ab^{-1})\psi_{\F_{q^3}}(z\F_q^*)\label{o2.1.1}                                     \\
            & -\frac{1}{3(q^3-1)}\sum_{e\in\F_q^*}\sum_{z\in \F_{q^3}^*}\sum_{i=1}^{q-2}\sum_{\ell=0}^{N-1}\chi_2^{-i}\chi_1^{-\ell}(z)\chi_1^\ell(ab)\chi_2^{i}(ab^{-1})\psi_{\F_{q^3}}(-zeR) \label{o2.1.2}                      \\
            & +\frac{1}{3(q^3-1)}\sum_{e\in\F_q^*}\sum_{z\in \F_{q^3}^*}\sum_{i=1}^{q-2}\sum_{\ell=0}^{N-1}\psi_{\F_{q^3}}(eC_0)\chi_2^{-i}\chi_1^{-\ell}(z)\chi_1^\ell(ab)\chi_2^{i}(ab^{-1})\psi_{\F_{q^3}}(-zeR).\label{o2.1.3}
 \end{align}
 We denote the summands in \eqref{o2.1.1}, \eqref{o2.1.2} and \eqref{o2.1.3} by $\Omega_1$, $\Omega_2$ and $\Omega_3$, respectively.
 Then $\Sigma_1=\Omega_1+\Omega_2+\Omega_3$.

 Since $\psi_{\F_{q^3}}(z\F_{q}^\ast)=\sum_{y\in \F_q^\ast}\psi_{\F_{q}}(\Tr_{q^3/q}(z)y)$, we have
 \begin{align*}
  \Omega_1= & \frac{N-1}{3(q^3-1)}\sum_{i=1}^{q-2}\sum_{\ell=0}^{N-1}\sum_{z\in\F_{q^3}^*}\chi_2^{-i}\chi_1^{-\ell}(z)\chi_1^\ell(ab)\chi_2^{i}(ab^{-1})\sum_{y\in\F_{q}^*}\psi_{\F_{q}}(\Tr_{q^3/q}(z)y) \\
  =         & \frac{({\color{blue}N-1})(q-1)}{3(q^3-1)}\sum_{i=1}^{q-2}\sum_{\ell=0}^{N-1}\sum_{z\in T_0}\chi_2^{-i}\chi_1^{-\ell}(z)\chi_1^\ell(ab)\chi_2^{i}(ab^{-1})                                   \\
            & -\frac{N-1}{3(q^3-1)}\sum_{i=1}^{q-2}\sum_{\ell=0}^{N-1}\sum_{z\in \F_{q^3}^*\setminus T_0}\chi_2^{-i}\chi_1^{-\ell}(z)\chi_1^\ell(ab)\chi_2^{i}(ab^{-1})                                   \\
  =         & \frac{\color{blue}{q(N-1)}}{3(q^3-1)}\sum_{i=1}^{q-2}\sum_{\ell=0}^{N-1}\sum_{z\in T_0}\chi_2^{-i}\chi_1^{-\ell}(z)\chi_1^\ell(ab)\chi_2^{i}(ab^{-1}).
 \end{align*}
 We next evaluate $\Omega_2$. Note that
 \begin{align}
  \sum_{e\in\F_q^*}\psi_{\F_{q^3}}(-zeR)= & \sum_{e\in\F_q^*}\sum_{\lambda\in\F_q}\sum_{h\in L_0}\psi_{\F_{q^3}}(-ze(\lambda+h^{q^2}-h^q))\nonumber                                 \\
  =                                       & \sum_{e\in\F_q^*}\sum_{\lambda\in\F_q}\sum_{h\in L_0}\psi_{\F_{q}}(-e\lambda\Tr_{q^3/q}(z))\psi_{\F_{q^3}}(-ze(h^{q^2}-h^q)). \nonumber 
 \end{align}
 If $\Tr_{q^3/q}(z)\neq 0$, 
 we have $\sum_{e\in\F_q^*}\psi_{\F_{q^3}}(-zeR)=0$. If $\Tr_{q^3/q}(z)=0$, 
 we have
 \begin{align*}
  \sum_{e\in\F_q^*}\psi_{\F_{q^3}}(-zeR)= & \sum_{e\in\F_q^*}\sum_{\lambda\in\F_q}\sum_{h\in L_0}\psi_{\F_{q^3}}(-ze(h^{q^2}-h^q))              \\
  =                                       & \sum_{e\in\F_q^*}\sum_{\lambda'\in\F_q}\sum_{h\in C_0}\psi_{\F_{q^3}}(-eh(z^{q}-z^{q^2})+h\lambda') \\
  =                                       & \sum_{x\in\F_{q^3}^*}\sum_{\lambda'\in\F_q}\psi_{\F_{q^3}}(x(\lambda'+z^{q^2}-z^{q}))=-q,
 \end{align*}
 where the last equality follows from  $z^{q^2}-z^q\notin \F_q$.
 Therefore, we have
 \[
  \Omega_2=\frac{q}{3(q^3-1)}\sum_{z\in T_0}\sum_{i=1}^{q-2}\sum_{\ell=0}^{N-1}\chi_2^{-i}\chi_1^{-\ell}(z)\chi_1^\ell(ab)\chi_2^{i}(ab^{-1}).
 \]
 We next evaluate $\Omega_3$. Note that
 \begin{align}
  \sum_{e\in\F_q^*}\psi_{\F_{q^3}}(eC_0)\psi_{\F_{q^3}}(-zeR)= & \sum_{e\in\F_{q}^*}\sum_{y\in C_0}\sum_{\lambda\in\F_q}\sum_{h\in L_0}\psi_{\F_{q^3}}(ey-ze(\lambda+h^{q^2}-h^q))\nonumber                              \\
  =                                                            & \sum_{e\in\F_{q}^*}\sum_{y\in C_0}\sum_{\lambda\in\F_q}\sum_{h\in L_0}\psi_{\F_q}(-e\lambda\Tr_{q^3/q}(z))\psi_{\F_{q^3}}(ey-ze(h^{q^2}-h^q)).\nonumber 
 \end{align}
 If $\Tr_{q^3/q}(z)\neq 0$, 
 we have  $\sum_{e\in\F_q^*}\psi_{\F_{q^3}}(eC_0)\psi_{\F_{q^3}}(-zeR)=0$. If $\Tr_{q^3/q}(z)=0$,  
 we have
 \begin{align*}
  \sum_{e\in\F_q^*}\psi_{\F_{q^3}}(eC_0)\psi_{\F_{q^3}}(-zeR)= & \sum_{e\in\F_{q}^*}\sum_{y\in C_0}\sum_{\lambda\in\F_q}\sum_{h\in L_0}\psi_{\F_{q^3}}(ey-ze(h^{q^2}-h^q))                   \\
  =                                                            & \sum_{e\in\F_q^*}\sum_{\lambda'\in\F_q}\sum_{h,y\in C_0}\psi_{\F_{q^3}}(ey-eh(z^{q}-z^{q^2})+h\lambda')                     \\
  =                                                            & \sum_{e\in\F_q^*}\sum_{\lambda'\in\F_q}\sum_{h,y\in C_0}\psi_{\F_{q^3}}(ey(1-y^{-1}h(z^{q}-z^{q^2})+y^{-1}he^{-1}\lambda')) \\
  =                                                            & \sum_{x\in\F_{q^3}^*}\sum_{\lambda\in\F_q}\sum_{h\in C_0}\psi_{\F_{q^3}}(x(1-h(z^{q}-z^{q^2})+h\lambda))                    \\
  =                                                            & q^3\mu_{z}-qN.
 \end{align*}
 Hence we have
 \[
  \Omega_3=\frac{1}{3(q^3-1)}\sum_{z\in T_0}\sum_{i=1}^{q-2}\sum_{\ell=0}^{N-1}(q^3\mu_z-qN)\chi_2^{-i}\chi_1^{-\ell}(z)\chi_1^\ell(ab)\chi_2^{i}(ab^{-1}).
 \]
 Summing up $\Omega_i$, $i=1,2,3$, above, we obtain
 \begin{align*}
  \Sigma_1
  = & \frac{q^3}{3(q^3-1)}\sum_{z\in T_0}\sum_{i=1}^{q-2}\sum_{\ell=0}^{N-1}\mu_z\chi_1^\ell(abz^{-1})\chi_2^{i}(ab^{-1}z^{-1})                    \\
  = & \frac{q^3}{3(q^3-1)}\sum_{w^{Nk+(q-1)j}\in T_0}\sum_{i=0}^{q-2}\sum_{\ell=0}^{N-1}\mu_z\chi_1^\ell(w^{(q-1)(t_0-j)})\chi_2^{i}(w^{N(u_0-k)}) \\
    & \quad -\frac{q^3}{3(q^3-1)}\sum_{w^{Nk+(q-1)j}\in T_0}\sum_{\ell=0}^{N-1}\mu_z\chi_1^\ell(w^{(q-1)(t_0-j)})                                  \\
  = & \begin{cases}
   \frac{q^3}{3}\mu_{z_0}-\frac{q^3}{3(q-1)}\sum_{k=0}^{q-2}\mu_{w^{Nk+(q-1)t_0}}, & \text{if~}w^{(q-1)t_0}\in T_0,    \\
   0,                                                                              & \text{if~}w^{(q-1)t_0}\notin T_0, \\
  \end{cases}
 \end{align*}

 Finally, we need to compute $\sum_{k=0}^{q-2}\mu_{w^{Nk+(q-1)t_0}}$ under the assumption that $w^{(q-1)t_0}\in T_0$.
 Let $x_0=w^{(q-1)t_0}$. Since $\N_{q^3/q}(\lambda+x_0^q-x_0^{q^2})\ne 0 $ for any $\lambda\in\F_{q}$, we have
 \begin{align*}
  \sum_{\theta\in\F_{q}^*}\mu_{x_0\theta}= & \sum_{\theta\in\F_{q}^*}\#\{(y,\lambda)\in C_0\times \F_q: y=\theta(x_0^q-x_0^{q^2})-\lambda\} \\
  =                                        & \sum_{\theta\in\F_{q}^*}\#\{\lambda\in\F_q: \N_{q^3/q}(\lambda+x_0^q-x_0^{q^2})=\theta^{-3}\}  \\
  =                                        & \#\{\lambda\in\F_q: \N_{q^3/q}(\lambda+x_0^q-x_0^{q^2})\in \F_q^\ast\}=q.
 \end{align*}
 Therefore,
 \[\Sigma_1=\begin{cases}
   \frac{q^3}{3}\mu_{z_0}-\frac{q^4}{3(q-1)}, & \text{if~}w^{(q-1)t_0}\in T_0,    \\
   0,                                         & \text{if~}w^{(q-1)t_0}\notin T_0.
  \end{cases}\]

 Similarly, by noting that
 \begin{align*}
  G(\chi_2^{-i}\chi_1^{-\ell})\sum_{x\in D_2}\chi_2^{i}\chi_1^\ell(x)= & \sum_{z\in \F_{q^3}^*}\chi_2^{-i}\chi_1^{-\ell}(z)\psi_{\F_{q^3}}(zD_2)            \\
  =                                                                    & \sum_{z\in \F_{q^3}^*}\chi_2^{-i}\chi_1^{-\ell}(z)\psi_{\F_{q^3}}(z\beta^{-1}D_3),
 \end{align*}
 %
 %
 %
 we have
 \begin{align*}
  \Sigma_2=
  \begin{cases}
   \frac{q^3}{3}\mu_{z_1}'-\frac{q^4}{3(q-1)}, & \text{if~}w^{(q-1)t_0}\in T_0,    \\
   0,                                          & \text{if~}w^{(q-1)t_0}\notin T_0. \\
  \end{cases}
 \end{align*}

 Finally, we evaluate $\Sigma_3$. Recall that $$\Sigma_3=-\frac{1}{3(q^3-1)}\sum_{i=1}^{q-2}\sum_{\ell=0}^{N-1}G(\chi_2^{i}\chi_1^{-\ell})G(\chi_2^{-i}\chi_1^{-\ell})\chi_1^\ell(ab)\chi_2^{i}(ab^{-1})\sum_{z\in T_0}\chi_2^{i}\chi_1^\ell(z). $$
 Since for $i\not=0$
 \begin{align*}
  \sum_{z\in T_0}\chi_2^{i}\chi_1^\ell(z)= & \sum_{x\in L_0}\sum_{y\in \F_q^*}\chi_2^i(xy)\chi_1^\ell(xy)                            \\
  =                                        & \left(\sum_{x\in L_0}\chi_1^\ell(x)\right)\left(\sum_{y\in \F_q^*}\chi_2^i(y)\right)=0,
 \end{align*}
 we have 
 $\Sigma_3=0$. This completes the proof of the proposition. 
\end{proof}

\end{document}